\def\Hom{\mathop{\rm Hom}\nolimits}
\def\vol{\mathop{vol}\nolimits}
\def\ker{\mathop{\rm ker}\nolimits}
\def\im{\mathop{\rm im}\nolimits}
\def\tr{\mathop{\rm Tr}\nolimits}
\def\diff{\mbox{\sl Diff}}
\def\note#1{\marginpar{\raggedright\if@twoside\ifodd\c@page\raggedleft\fi\fi\sf\scriptsize RMK: #1}}
\def\epsilon{{\varepsilon}}
\def\ep{{\varepsilon}}
\newcommand{\SU}{\mr{SU}}
\newcommand{\SO}{\mr{SO}}
\newcommand{\Sp}{\mr{Sp}}
\newcommand{\GL}{\mr{GL}}
\newcommand{\Gt}{\mr{G_2}}
\newcommand{\spin}{\mr{Spin}}
\newcommand{\mf}{\mathfrak}
\newcommand{\mr}{\mathrm}
\newcommand{\mb}{\mathbf}
\newcommand{\mc}{\mathcal}
\newcommand{\R}{\mathbb{R}}
\newcommand{\C}{\mathbb{C}}
\newcommand{\Oc}{\mathbb{O}}
\renewcommand{\iff}{if and only if }
\newcommand{\wrt}{with respect to }
\newcommand{\Id}{\mr{Id}}
\newcommand{\be}{\begin{equation}}
\newcommand{\ben}{\begin{equation}\nonumber}
\newcommand{\ee}{\end{equation}}
\newcommand{\grad}{\mr{grad}\,}
\newcommand{\id}{\operatorname{id}}
\newcommand{\Diff}{\operatorname{Diff}}
\newtheorem{thm}{Theorem}[section]
\newtheorem{prp}[thm]{Proposition}
\newtheorem{lem}[thm]{Lemma}
\newtheorem{cor}[thm]{Corollary}
\theoremstyle{definition}
\newtheorem{dfn}[thm]{Definition}
\theoremstyle{remark}
\newtheorem*{rmk}{Remark}
\newtheorem*{ex}{Example}
\begin{document}     
\title{Energy functionals and soliton equations for $\Gt$-forms}
\author{Hartmut Wei\ss{}} 
\address{Mathematisches Institut der LMU M\"unchen\\Theresienstra{\ss}e 39\\ D--80333 M\"unchen \\ F.R.G.}
\email{weiss@math.lmu.de}
\author{Frederik Witt} 
\address{Mathematisches Institut der Universit\"at M\"unster\\ Einsteinstra{\ss}e 62\\ D--48149 M\"unster\\ F.R.G.}
\email{frederik.witt@uni-muenster.de}

\begin{abstract}
We extend short-time existence and stability of the Dirichlet energy flow as proven in a previous paper by the authors to a broader class of energy functionals. Furthermore, we derive some monotonely decreasing quantities for the Dirichlet energy flow and investigate an equation of soliton type. In particular, we show that nearly parallel $\Gt$-structures satisfy this soliton equation and study their infinitesimal soliton deformations.
\end{abstract}

\maketitle
%
%
%
\section{Introduction}\label{introduction}
In the quest for ``special'' metrics, variational principles play an important r\^ole. A prominent example is the total scalar curvature functional on the space of Riemannian metrics, whose critical points are Ricci-flat metrics. In this article we consider various functionals defined on $\Omega^3_+(M)$, the space of {\em positive $3$-forms} on a compact, seven dimensional spin manifold $M$. These forms are sections of the fibre bundle $\Lambda^3_+T^*M \rightarrow M$ whose fibre is the {\em open} orbit $\GL(7)_+/\Gt$ of $\GL(7)_+$ acting on $\Lambda^3\R^{7*}$. Furthermore, such a section $\Omega$ induces a Riemannian metric $g_\Omega$ on $M$. We also refer to $\Omega$ as a $\Gt$-{\em structure} on $M$. The importance of this notion stems from the fact the only (irreducible) odd-dimensional instance of special holonomy comes from metrics of the form $g_\Omega$. A central problem is to find conditions which ensure the existence of a holonomy $\Gt$-metric provided necessary topological conditions are met. Such a theorem would yield an analogue of Yau's celebrated theorem~\cite{ya78} which asserts the existence of a metric with holonomy $\SU(m)$ on a K\"ahler manifold $M^{2m}$ whose first Chern class vanishes.

\medskip

The quantity we seek to extremalise is the {\em intrinsic torsion} of a positive $3$-form $\Omega$ which can be thought of as an endomorphism of $TM$ (cf.\ Section~\ref{dhfunc} for a definition). To see what this means concretely we recall that by a result of Fern\'andez and Gray~\cite{fegr82}, $\Omega$ is {\em torsion-free}, i.e.\ its intrinsic torsion vanishes, if and only if $d\Omega=0$ and $\delta_\Omega\Omega=0$ (here, $\delta_\Omega$ denotes the codifferential induced by $g_\Omega$). This, in turn, is equivalent for the holonomy of $g_\Omega$ to be contained in $\Gt$. In~\cite{wewi10} we show that the critical points of the {\em Dirichlet energy functional}
\ben
\mc D : \Omega^3_+(M) \rightarrow \R\,,\; \Omega \mapsto \frac{1}{2} \int_M (|d\Omega|_\Omega^2 + |\delta_\Omega\Omega|_\Omega^2) \vol_\Omega
\ee
(with $\vol_\Omega=\Omega \wedge \star_\Omega\Omega/7$) are precisely the torsion-free forms. Since these are absolute minimisers of $\mc D$, it is natural to consider the negative gradient flow
\begin{equation}\label{DF}
\tag{DF}
\frac{\partial}{\partial t}\, \Omega_t = -\grad \mc D(\Omega_t) =:Q(\Omega_t) 
\end{equation}
for $t \in [0,T)$, subject to some initial condition $\Omega_0 \in \Omega_+^3(M)$. Here, $-\grad$ denotes the negative $L^2$-gradient determined by $D_\Omega\mc{D}(\dot\Omega)=-\langle Q(\Omega),\dot\Omega\rangle_\Omega= - \int_M Q(\Omega) \wedge \star_\Omega \dot \Omega$ for all $\dot \Omega \in \Omega^3(M)$. The principal results of~\cite{wewi10} are these:

\begin{thm}{\bf(Short-time existence)}\label{stex}
The Dirichlet energy flow $\partial_t \Omega_t = Q(\Omega_t)$ has a unique short-time solution for any initial condition $\Omega_0 \in \Omega^3_+(M)$.
\end{thm}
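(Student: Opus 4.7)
The natural strategy is the DeTurck trick, familiar from Ricci flow. The functional $\mc D$ is $\Diff(M)$-invariant, so its $L^2$-gradient satisfies $Q(\varphi^*\Omega)=\varphi^*Q(\Omega)$. Differentiating at the identity, the linearisation $D_\Omega Q$ annihilates Lie derivatives $\mc L_X\Omega$ to leading order, and hence the principal symbol $\sigma_\xi(D_\Omega Q)$ has a nontrivial kernel containing the image of the symbol of $X\mapsto\mc L_X\Omega$, namely $\{\xi\wedge \iota_X\Omega \mid X\in T_pM\}$. Since $\mc D$ is quadratic in first derivatives of $\Omega$, the operator $Q$ is a quasilinear second-order operator on sections of the open subbundle $\Lambda^3_+T^*M$, but by the above it is only weakly parabolic, so standard short-time theory does not apply to \eqref{DF} directly.

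The first step is an explicit computation of $Q(\Omega)$, starting from the first variation of $|d\Omega|^2_\Omega$ and $|\delta_\Omega\Omega|^2_\Omega$ and carefully tracking the dependence of $g_\Omega$ and $\vol_\Omega$ on $\Omega$. Using the $\Gt$-irreducible decomposition $\Lambda^3T^*M=\Lambda^3_1\oplus\Lambda^3_7\oplus\Lambda^3_{27}$ associated to $\Omega$, one verifies that $\ker\sigma_\xi(D_\Omega Q)$ is exactly the image of the Lie-derivative symbol and that $\sigma_\xi$ is strictly negative on a complement.

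The second step is to gauge-fix by a DeTurck vector field. Pick a reference form $\Omega_0\in\Omega^3_+(M)$ (for instance the initial datum) and define $V(\Omega,\Omega_0)$ from the difference of the Levi-Civita connections of $g_\Omega$ and $g_{\Omega_0}$, i.e.\ the $\Gt$-analogue of $V^k=g^{ij}(\Gamma^k_{ij}-\Gamma^k_{0,ij})$. Consider the modified flow
\be\label{modDF}
\frac{\partial}{\partial t}\wt\Omega_t=Q(\wt\Omega_t)+\mc L_{V(\wt\Omega_t,\Omega_0)}\wt\Omega_t,\qquad \wt\Omega_0=\Omega_0.
\ee
The added Lie-derivative term contributes precisely in the missing symbol directions, rendering \eqref{modDF} a strictly parabolic quasilinear system on $\Omega^3_+(M)$. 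Short-time existence and uniqueness then follow from the standard theory for quasilinear parabolic systems, for instance via a contraction argument in parabolic H\"older spaces.

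To close the argument, I would integrate the time-dependent vector field $-V(\wt\Omega_t,\Omega_0)$ to a family of diffeomorphisms $\varphi_t$, which exist on the same time interval by compactness of $M$, and set $\Omega_t:=\varphi_t^*\wt\Omega_t$. The $\Diff(M)$-equivariance $Q(\varphi^*\Omega)=\varphi^*Q(\Omega)$ then shows that $\Omega_t$ solves \eqref{DF} with initial datum $\Omega_0$, and uniqueness for \eqref{DF} reduces to uniqueness of \eqref{modDF} together with the ODE uniqueness for $\varphi_t$. I expect the main difficulty to be the algebraic identification in the first two steps: computing $Q$ explicitly in $\Gt$-geometry and selecting a DeTurck term whose symbol contribution exactly cancels $\ker\sigma_\xi$; the analytic half afterwards is essentially routine.
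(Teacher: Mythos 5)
Your strategy is the same as the paper's: the actual proof (given in \cite{wewi10}, and reproduced in this paper for the generalised flow in Theorem~\ref{nuste}) is exactly a DeTurck-type gauge fixing, followed by pulling back along the flow of the gauge vector field and a separate uniqueness argument. The difference is your choice of gauge term: the paper uses $X(\Omega)=-(\delta_{\bar\Omega}\Omega)\llcorner\bar\Omega$, whose linearisation involves first derivatives of $\dot\Omega$ itself, and proves a definite symbol estimate (Lemma 5.7 of \cite{wewi10}); in the generalised case it even adds only a small multiple $\varepsilon(\nu)$ of this term precisely so that the estimate survives. Your Ricci-flow-style field $V$ built from the Christoffel difference of $g_\Omega$ and $g_{\Omega_0}$ sees $\Omega$ only through $g_\Omega$, and the linearisation $\dot\Omega\mapsto\dot g_\Omega$ kills the $\Lambda^3_7$-component of $\dot\Omega$. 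The degenerate directions $\xi\wedge(X\llcorner\Omega)$ do produce a nonzero $\dot g$ (symbolically $\xi\odot X^\flat$), so your term is not hopeless, but its symbol is \emph{not} positive semidefinite on all of $\Lambda^3$; strict parabolicity of the modified flow therefore does not follow from ``contributes in the missing directions'' alone. You must either verify that $-\sigma(D_\Omega Q)\geq 0$ with kernel exactly $\{\xi\wedge(X\llcorner\Omega)\}$ and then control the cross terms (e.g.\ by adding only a small multiple of the gauge term, as the paper does), or compute the full modified symbol directly, as \cite{wewi10} does for its vector field. This is the real content of the theorem and is currently only asserted in your plan.

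The uniqueness step is also stated backwards. Integrating $-V$ and pulling back converts a solution of the gauge-fixed flow into a solution of \eqref{DF}; that is the easy direction. Uniqueness for \eqref{DF} requires the converse: given an arbitrary solution $\Omega_t$ of \eqref{DF}, one must produce diffeomorphisms turning it into a solution of the gauge-fixed flow, and since $V$ depends on first derivatives of $(\varphi_t^{-1})^*\Omega_t$, the equation for $\varphi_t$ is a parabolic PDE (the analogue of DeTurck's harmonic map heat flow), not an ODE with prescribed coefficients. So ``uniqueness of the modified flow plus ODE uniqueness'' does not suffice as written; this is the part the paper delegates to the uniqueness argument of \cite{wewi10}.
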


In particular, for any initial condition there exists a unique solution to~\eqref{DF} on a maximal time interval $[0,T_{max})$ where  $T_{max} \in (0,\infty]$.

\begin{thm}\label{stab} {\bf(Stability)}
Let $\bar \Omega \in \Omega^3_+(M)$ be torsion-free. Then for any initial condition sufficiently close to $\bar \Omega$ in the $C^\infty$-topology the Dirichlet energy flow exists for all times and converges modulo diffeomorphisms to a torsion-free $\Gt$-structure. 
\end{thm}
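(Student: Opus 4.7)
The strategy is to convert the weakly parabolic flow~\eqref{DF} into a strictly parabolic flow via a DeTurck-type gauge fixing, establish a spectral gap for its linearisation at $\bar\Omega$, and then invoke a \L{}ojasiewicz--Simon argument for convergence modulo diffeomorphisms. Since $\mc D$ is diffeomorphism invariant, $Q$ is equivariant in the sense that $Q(\phi^*\Omega)=\phi^*Q(\Omega)$, so the kernel of $DQ|_{\bar\Omega}$ contains every Lie derivative $\mc L_X \bar\Omega$; this prevents strict parabolicity. To break the gauge I would choose a first-order differential operator $\Omega\mapsto V(\Omega)\in\Gamma(TM)$ vanishing at $\bar\Omega$ (of Bianchi type, built from $d\Omega$ and $\delta_\Omega\Omega$ together with the natural $\Gt$-connection) and consider the modified flow
\ben
\partial_t \Omega_t = Q(\Omega_t) + \mc L_{V(\Omega_t)}\Omega_t.
\ee
This differs from~\eqref{DF} by the time-dependent family of diffeomorphisms generated by $V(\Omega_t)$, so any convergence result for the modified flow translates into convergence of~\eqref{DF} modulo diffeomorphisms.

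I would then study the linearisation $L$ of the gauged right-hand side at $\bar\Omega$, exploiting the orthogonal $\Gt$-decomposition $\Lambda^3T^*M=\Lambda^3_1\oplus\Lambda^3_7\oplus\Lambda^3_{27}$, in which $\Lambda^3_7\cong TM$ parametrises the infinitesimal diffeomorphism deformations of $\bar\Omega$. A suitable $V$ is designed precisely so that its linearisation cancels the longitudinal degeneracy of $DQ|_{\bar\Omega}$ on $\Lambda^3_7$, turning $L$ into a self-adjoint elliptic operator of non-positive type. At the torsion-free point $\bar\Omega$ the kernel of $L$ should coincide with the tangent space to the smooth moduli space $\mc M$ of nearby torsion-free $\Gt$-structures (by Joyce's theorem), and $L$ enjoys a spectral gap transverse to $\ker L$.

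For global existence and convergence I would use the \L{}ojasiewicz--Simon inequality, which applies because $\mc D$ is real analytic in $\Omega$ and the Hessian at $\bar\Omega$ is Fredholm with closed range. It yields an estimate of the form
\ben
\bigl|\mc D(\Omega)\bigr|^{1-\theta}\le C\,\|\grad\mc D(\Omega)\|_{L^2}
\ee
for some $\theta\in(0,\tfrac12]$, valid in a small $C^{2,\alpha}$-neighbourhood of $\bar\Omega$. Combined with the energy identity $\tfrac{d}{dt}\mc D(\Omega_t)=-\|\grad\mc D(\Omega_t)\|_{L^2}^2$ for the gauged flow, this forces the $L^2$-orbit to have finite length; standard arguments then show that the gauged flow exists for all times and converges in $C^{2,\alpha}$ to some $\Omega_\infty\in\mc M$. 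Parabolic bootstrapping upgrades the convergence to $C^\infty$, and integrating the vector field $V(\Omega_t)$ backwards to undo the DeTurck diffeomorphism completes the proof.

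The main obstacle I expect is the construction of a workable DeTurck gauge. Unlike the Ricci flow, where the Bianchi-type correction is canonical, here one must exploit the specific algebraic structure of $\Gt$ to find a first-order $V(\Omega)$ whose linearisation exactly neutralises the longitudinal degeneracy of $DQ|_{\bar\Omega}$ while keeping the gauged flow well-posed and preserving positivity of $\Omega_t$ on a uniform time interval. The remaining technical point is verifying the Fredholm and analyticity hypotheses of \L{}ojasiewicz--Simon in the $\Omega^3_+(M)$ setting; both should follow from standard arguments once the gauge and the $\Lambda^3$-decomposition are in place.
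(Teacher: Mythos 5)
Your first half coincides with the paper's strategy: the proof (carried out in~\cite{wewi10} and rerun for $\mc D_\nu$ in Theorem~\ref{nustab} above) fixes the gauge with precisely such a DeTurck field, namely $X(\Omega)=-(\delta_{\bar\Omega}\Omega)\llcorner\bar\Omega$, and the linear input is non-positivity plus a G\aa rding inequality for the linearised gauged operator $L$ at the torsion-free point. The second half, however, is genuinely different. The paper does not use a {\L}ojasiewicz--Simon inequality: in~\cite{wewi10} convergence is obtained from linear stability combined with \emph{integrability} of the kernel --- by Joyce's theorem the torsion-free structures near $\bar\Omega$ inside Ebin's slice form a smooth finite-dimensional manifold whose tangent space is exactly $\ker L$ in the slice --- in the manner of the stability results for the Ricci flow at Ricci-flat metrics, yielding exponential convergence. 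Your {\L}ojasiewicz--Simon route trades that integrability input for real analyticity of $\mc D$; it is a viable alternative (an argument of this type has been carried out for related $\Gt$-flows), and it would in principle also cover situations where the critical set is not known to be smooth, at the price of a weaker (a priori only polynomial) convergence rate.

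As written, though, your verification of the {\L}ojasiewicz--Simon hypotheses contains a genuine gap. Because $\mc D$ is $\Diff(M)_+$-invariant and the entire orbit of $\bar\Omega$ consists of minimisers, the second variation of $\mc D$ at $\bar\Omega$ vanishes on the infinite-dimensional space $\im\lambda_{\bar\Omega}^*$ of Lie derivatives; hence the Hessian is \emph{not} Fredholm on $\Omega^3(M)$, contrary to your assertion. The inequality must be proved for a gauged functional or on the slice $\ker\lambda_{\bar\Omega}$ from the decomposition~\eqref{tsdecomp}, and that is the substantive analytic step which your outline defers to ``standard arguments''. Two further points: the energy identity $\tfrac{d}{dt}\mc D(\Omega_t)=-\|Q(\Omega_t)\|^2_{L^2}$ along the \emph{gauged} flow is not automatic, but holds because $Q(\Omega)$ is $L^2$-orthogonal to all Lie-derivative directions, i.e.\ by the Bianchi-type identity $\lambda_\Omega(Q(\Omega))=0$ of Lemma~\ref{Bianchi}, which you should invoke explicitly; and the degeneracy of $D_{\bar\Omega}Q$ is along $\im\lambda_{\bar\Omega}^*$, not along the pointwise $\Lambda^3_7$-component --- a form $\mc L_X\bar\Omega$ has in general components in all three summands of $\Lambda^3$, so the identification you use to motivate the gauge term needs to be replaced by the splitting~\eqref{tsdecomp}.
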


\medskip

In this article we analyse the flow~\eqref{DF} further. Firstly, we derive various monotonely decreasing quantities. In particular, we show that the $W^{1,2}$-Sobolev norm $\|\Omega_t\|^2_{W^{1,2}_{\Omega_t}}$ is bounded by a monotonely decreasing bound $C_t$. Moreover, $\frac{d}{dt} C_t=0$ \iff $\Omega_t$ is torsion-free. The proof involves the functional
\ben
\mc C(\Omega)=\frac{1}{2}\int_M|\nabla^\Omega\Omega|^2_\Omega\vol_\Omega,
\ee
where $\nabla^\Omega$ is the Levi--Civita connection induced by $g_\Omega$. Its critical points are again the torsion-free positive forms, and the associated negative gradient flow has properties very similar to~\eqref{DF}. In fact, both $\mc D$ and $\mc C$ are special instances of a whole family of energy functionals. To discuss these in general, we first recall that any $\Omega\in\Omega^3_+(M)$ induces a $\Gt$-decomposition of $p$-forms $\Lambda^p=\oplus_q\Lambda^p_q$ into irreducible modules, where $q$ is the rank of the module. The corresponding module of sections will be denoted by $\Omega^p_q(M)$ (this is analogous to the decomposition into $(p,q)$-forms over an almost-complex manifold). For example, 
\be\label{pqdecomp}
\Lambda^2=\Lambda^2_7\oplus\Lambda^2_{14}\quad\mbox{and}\quad \Lambda^3=\Lambda^3_1\oplus\Lambda^3_7\oplus\Lambda^3_{27}.
\ee
Of course, $\Lambda^3_1$ is spanned by the invariant form $\Omega$. Furthermore, $\Lambda^1$ is irreducible. Since the induced Hodge-star operator $\star_\Omega$ is a $\Gt$-equivariant isomorphism $\Lambda^p\to\Lambda^{7-p}$, we immediately get the decomposition of $\Lambda^p$ for $p=4,\,5$ and $6$. In particular, we can decompose $d\Omega$ and $d\star_\Omega\Omega$ into irreducible components. Using various $\Gt$-equivariant isomorphisms we can write
\be\label{torsion}
d \Omega = \tau_0 \star_\Omega \Omega + 3 \tau_1 \wedge \Omega + \star_\Omega \tau_3
\ee
and
\be\label{cotorsion}
d \star_\Omega\Omega = 4 \tau_1 \wedge \star_\Omega\Omega + \tau_2 \wedge \Omega
\ee
(see e.g.\ Proposition 1 in~\cite{br06}) for uniquely determined {\em torsion forms} $\tau_0 \in \Omega^0_1(M)$, $\tau_1 \in \Omega^1_7(M)$, $\tau_2 \in \Omega^2_{14}(M)$ and $\tau_3 \in \Omega^3_{27}(M)$. These forms depend on $\Omega$ and can be thought of as maps from $\Omega^3_+(M)$ to $\Omega^p_q$. The $\tau_k(\Omega)$ vanish identically for all $k$ if and only if $\Omega$ is closed and coclosed, that is, if $\Omega$ is torsion-free. Note in passing that it is not obvious that $\tau_1$ appears twice in both $d\Omega$ and $d\star_\Omega\Omega$, cf.~\cite{br87}. Here, this will be a consequence of a Bianchi-type identity for $\Omega$, see the remark after Lemma~\ref{Bianchi}. We now define the energy functionals
\ben
\mathcal D_\nu:= \sum_{i=0}^3 \nu_i \mathcal D_i
\ee
with
\ben
\mathcal D_i(\Omega) := \frac{1}{2} \int_M |\tau_i|_\Omega^2 \vol_\Omega.
\ee
and $\nu=(\nu_0,\nu_1,\nu_2,\nu_3) \in \R^4$. If $\nu\in\R^4_+$, that is, all entries in $\nu$ are positive, we can prove Theorem~\ref{stex} and Theorem~\ref{stab} for the {\em generalised Dirichlet energy flow}
\begin{equation}\label{DF_nu}
\tag{$\text{DF}_\nu$}
\frac{\partial}{\partial t}\, \Omega_t = Q_\nu(\Omega_t),
\end{equation}
see Theorem~\ref{nuste} and Theorem~\ref{nustab}. The flow~\eqref{DF} is just the special case for $\nu=(7,84,1,1)$. However, we shall write $\mc D$ and $Q$ for $\mc D_{\nu}$ and $Q_{\nu}$ in this case to be consistent with~\cite{wewi10}.

\medskip

To obtain concrete solutions to \eqref{DF_nu}, we consider the equation
\ben
Q_\nu(\Omega_0) =\mu_0\Omega_0 + \mathcal L_{X_0} \Omega_0
\ee
for some real constant $\mu_0$ and vector field $X_0$ (with $\mc{L}_{X_0}$ the Lie derivative along $X_0$). In analogy with Ricci-flow we call this the {\em $\mc D_\nu$-soliton equation}. A $\mc D$-soliton, where $\mc D$ is the original Dirichlet energy functional, will be simply called a {\em$\Gt$-soliton}. For a $\mc D_\nu$-soliton $\Omega_0$ as initial condition, the solution to \eqref{DF_nu} has the form $\Omega_t=\mu(t)\Omega_0$ with $\mu(t) \searrow 0$ as $t \nearrow T_{max}$, and so becomes singular. As in the Ricci-flow case, one expects $\Gt$-solitons to play a major r\^ole in the study of finite time singularities. We first show that any $\Gt$-soliton is necessarily of the form $Q(\Omega)=\mu\Omega$. This is precisely the condition to be a critical point for $\mc D$ subject to the constraint that the total volume $\int_M\vol_\Omega$ equals $1$. Furthermore, any such $\Gt$-soliton is either steady, i.e.\ $\mu_0=0$, in which case the flow is constant and thus exists trivially for all times, or shrinking, i.e.\ $\mu_0<0$. In this case the flow collapses in finite time. Our main result is that {\em nearly parallel $\Gt$-structures} (i.e.\ $\Gt$-structures for which all torsion forms but $\tau_0$ vanish) are $\Gt$-solitons in the sense above (cf.\ Theorem~\ref{weakg2sol}). For example, the $7$-sphere with the round metric is nearly parallel. In general, nearly parallel $\Gt$-structures induce Einstein metrics with positive Einstein constant. However, we do not know whether a soliton is necessarily of this type. Finally, we investigate the premoduli space of $\Gt$-soliton deformations at a nearly parallel $\Gt$-structure. As in the Einstein case we can prove that the premoduli space is a real-analytic subset of some finite dimensional real analytic submanifold (cf.\ Theorem~\ref{premodstructure}). Any infinitesimal Einstein deformation of a nearly parallel $\Gt$-structure gives an infinitesimal soliton deformation, but again we do not know whether the converse holds.

\medskip

{\em Conventions.} 
(i) In this paper we shall only encounter irreducible $\Gt$-representation spaces of dimension equal or less than $27$. In this range, an irreducible $\Gt$-representation is uniquely determined by its dimension $q$. For instance, the space of symmetric $2$-tensors $\odot^2\R^{7*}$ can be decomposed into the line spanned by the identity and the $27$-dimensional irreducible space of tracefree $2$-tensors $\odot^2_0\R^{7*}$, which is thus isomorphic to $\Lambda^3_{27}\R^{7*}$. Consequently, the module of endomorphisms can be decomposed into
\be\label{endodecomp}
\R^{7*}\otimes\R^{7*}=\odot^2\R^{7*}\oplus\Lambda^2\R^{7*}=\Lambda^3_0\oplus\Lambda^3_{27}\oplus\Lambda^3_7\oplus\Lambda^2_{14}.
\ee
We denote projection onto irreducible components by $[\,\cdot\,]_q$. For example, a $3$-form $\dot\Omega\in\Omega^3(M)$ can be decomposed into $\dot\Omega=[\dot\Omega]_1\oplus [\dot\Omega]_7\oplus[\dot\Omega]_{27}$ and an endomorphism $\dot A$ into $[\dot A]_1\oplus[\dot A]_7\oplus[\dot A]_{14}\oplus[\dot A]_{27}$.

(ii) If $F:\Omega^3_+(M)\to E$ is a smooth map between Fr\'echet spaces we often write $\dot F_\Omega$ for $D_\Omega F(\dot\Omega)$, the linearisation of $F$ at $\Omega$ evaluated in $\dot\Omega\in\Omega^3(M)$. For example, for the map $\Theta:\Omega^3_+(M)\to\Omega^4(M)$ which sends $\Omega$ to $\Theta(\Omega)=\star_\Omega\Omega$, we get 
\be\label{thetader}
\dot \Theta_\Omega = \star_\Omega p_\Omega(\dot \Omega)
\ee
with
\ben
p_\Omega(\dot \Omega) = \frac{4}{3} [\dot \Omega]_1 + [\dot \Omega]_7 - [\dot \Omega]_{27}.
\ee

Another example is $Q:\Omega^3_+(M)\to\Omega^3(M)$, the negative gradient of $\mc D$, given by
\be\label{qop}
Q(\Omega)=-\delta_\Omega d\Omega-p_\Omega(d\delta_\Omega\Omega)-q_\Omega(\nabla^\Omega\Omega),
\ee
where $q_\Omega$ is determined by the identities 
\be\label{quadratic}
\langle \dot \Omega, q_\Omega(\nabla^\Omega\Omega)\rangle_\Omega = \frac{1}{2} \bigl( \langle \dot\star_\Omega d\Omega, \star_\Omega d\Omega \rangle_\Omega + \langle \dot\star_\Omega d\star_\Omega\Omega, \star_\Omega d\star_\Omega\Omega \rangle_\Omega \bigr)
\ee
to hold for all $\dot \Omega \in \Omega^3(M)$. 
%
%
%
\section{The Dirichlet energy and the Hitchin functional}\label{dhfunc}
%
\subsection{The torsion forms of a positive 3-form}
Recall that $\nabla^\Omega\Omega$ is a section of $\Lambda^1\otimes\Lambda^3_7$ and hence may be written as $\nabla^\Omega\Omega=T(\Omega)$ for a uniquely determined tensor field $T\in\Gamma(\Lambda^1\otimes\Lambda^2_7)$, the {\em intrinsic torsion} of the $\Gt$-structure (cf.\ for example~\cite{br06}). Here the $\Lambda^2_7$ factor of $T$ acts, seen as an element in $\Lambda^2\cong\mf{so}(7)$, the Lie algebra of $\SO(7)$, equivariantly in the standard way on $\Omega$ and gives an element in $\Lambda^3_7$. The module $\Lambda^1_7 \otimes \Lambda^3_7$ decomposes as $\Lambda^0_1 \oplus \Lambda^1_7 \oplus \Lambda^2_{14} \oplus \Lambda^3_{27}$ into $\Gt$-irreducible ones. Hence 
\ben
\nabla^\Omega\Omega = \xi_1 + \xi_7 + \xi_{14} + \xi_{27},
\ee
where $\xi_i$ denotes the projection of $\xi:=\nabla^\Omega\Omega$ onto the corresponding irreducible summand. The $\xi_k$ are thus the irreducible components of the intrinsic torsion $T$ under the embedding $T\mapsto T(\Omega)$.

\begin{prp}\label{d_delta_norm}
Let $\Omega \in \Omega^3_+(M)$ be a positive 3-form. Then the following holds:

{\rm(i)} One has
\ben
|d\Omega|_\Omega^2 = 7 \tau_0^2 + 36 |\tau_1|_\Omega^2 + |\tau_3|_\Omega^2
\ee
and
\ben
|\delta_\Omega\Omega|_\Omega^2 = 48 |\tau_1|_\Omega^2 + |\tau_2|_\Omega^2.
\ee
In particular,
\be\label{D1_density}
|d\Omega|_\Omega^2 + |\delta_\Omega\Omega|_\Omega^2 = 7 \tau_0^2 + 84 |\tau_1|_\Omega^2 + |\tau_2|_\Omega^2 + |\tau_3|_\Omega^2. 
\ee

{\rm(ii)} One has
\be\label{D2_density}
|\nabla^\Omega\Omega|_\Omega^2 = \frac{7}{4} \tau_0^2 + 24 |\tau_1|_\Omega^2 + 2 |\tau_2|_\Omega^2 + 2 |\tau_3|_\Omega^2.   
\ee
\end{prp}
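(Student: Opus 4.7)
The plan for (i) is to exploit the orthogonality of the $\Gt$-irreducible components of $\Lambda^4$ and $\Lambda^5$. The three summands in \eqref{torsion} lie in $\Lambda^4_1$, $\Lambda^4_7$ and $\Lambda^4_{27}$ respectively, and the two summands in \eqref{cotorsion} lie in $\Lambda^5_7$ and $\Lambda^5_{14}$, so $|d\Omega|_\Omega^2$ and $|d\star_\Omega\Omega|_\Omega^2$ split orthogonally as sums of squared norms of the individual terms. Since $\star_\Omega$ is an isometry and $\delta_\Omega\Omega = -\star_\Omega d \star_\Omega\Omega$, one also has $|\delta_\Omega\Omega|_\Omega^2 = |d\star_\Omega\Omega|_\Omega^2$. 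It then suffices to verify the four pointwise norm identities $|\Omega|_\Omega^2 = |\star_\Omega\Omega|_\Omega^2 = 7$, $|\alpha \wedge \Omega|_\Omega^2 = 4|\alpha|_\Omega^2$ and $|\alpha \wedge \star_\Omega\Omega|_\Omega^2 = 3|\alpha|_\Omega^2$ for $\alpha \in \Lambda^1$, and $|\beta \wedge \Omega|_\Omega^2 = |\beta|_\Omega^2$ for $\beta \in \Lambda^2_{14}$ (this last from the characterising relation $\beta \wedge \Omega = -\star_\Omega \beta$), each of which is a direct calculation in a $\Gt$-adapted orthonormal basis. Substituting yields the stated expressions for $|d\Omega|_\Omega^2$ and $|\delta_\Omega\Omega|_\Omega^2$ and hence also \eqref{D1_density}.

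For (ii), the plan is representation-theoretic. Since $\nabla^\Omega\Omega = T(\Omega)$ with $T \in \Gamma(\Lambda^1 \otimes \Lambda^2_7)$, and both $\Lambda^1 \otimes \Lambda^2_7$ and the ambient space $\Lambda^0 \oplus \Lambda^1 \oplus \Lambda^2_{14} \oplus \Lambda^3_{27}$ of torsion forms share the same irreducible $\Gt$-decomposition with each factor appearing exactly once, Schur's lemma identifies the components $T_k$ of $T$ with the torsion forms $\tau_k$ up to universal constants; likewise the equivariant embedding $T \mapsto T(\Omega)$ into $\Lambda^1 \otimes \Lambda^3_7$ acts as a scalar on each irreducible summand. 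Combined with the orthogonality of the decomposition $\xi = \xi_1 + \xi_7 + \xi_{14} + \xi_{27}$, this yields $|\nabla^\Omega\Omega|_\Omega^2 = \sum_k a_k |\tau_k|_\Omega^2$ for some universal constants $a_k$.

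The main obstacle is pinning down those constants as $a_0 = \tfrac{7}{4}$, $a_1 = 24$ and $a_2 = a_3 = 2$. The most direct route is a pointwise test calculation in a $\Gt$-adapted frame: for each irreducible type in turn one picks $\Omega$ at a point so that only the corresponding $\tau_k$ is nonzero, and then uses an explicit formula for $\nabla^\Omega\Omega$ in terms of the intrinsic torsion (e.g.\ as in~\cite{br06}) to read off $a_k$. An alternative is to combine the identities $d\Omega = \mathrm{alt}(\nabla^\Omega\Omega)$ and $\delta_\Omega\Omega = -\mathrm{tr}(\nabla^\Omega\Omega)$ with part (i), together with the Bianchi-type identity of Lemma~\ref{Bianchi}, which resolves the double appearance of $\tau_1$ in the equations for $d\Omega$ and $d\star_\Omega\Omega$ and so permits one to invert the linear relations between the $\xi_k$ and the $\tau_k$.
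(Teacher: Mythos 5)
Part (i) of your proposal is correct and is essentially the paper's own argument: orthogonality of the $\Gt$-irreducible pieces of \eqref{torsion} and \eqref{cotorsion} together with the pointwise identities $|\Omega|_\Omega^2=|\star_\Omega\Omega|_\Omega^2=7$, $|\alpha\wedge\Omega|_\Omega^2=4|\alpha|_\Omega^2$, $|\alpha\wedge\star_\Omega\Omega|_\Omega^2=3|\alpha|_\Omega^2$ and $|\beta\wedge\Omega|_\Omega^2=|\beta|_\Omega^2$ for $\beta\in\Lambda^2_{14}$.

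Part (ii), however, has a genuine gap: the constants $a_0=\tfrac{7}{4}$, $a_1=24$, $a_2=a_3=2$ are the whole content of \eqref{D2_density}, and neither of your routes actually produces them. Schur's lemma only tells you that exterior and interior multiplication $\ep,\iota$ (and the embedding $T\mapsto T(\Omega)$) act as scalars on each irreducible summand of $\Lambda^1\otimes\Lambda^3_7$; it does not give the values of these scalars. In your second route, part (i) yields $|[d\Omega]_1|_\Omega^2=7\tau_0^2$, $|[d\Omega]_7|_\Omega^2=36|\tau_1|_\Omega^2$, $|[d\Omega]_{27}|_\Omega^2=|\tau_3|_\Omega^2$, $|[\delta_\Omega\Omega]_7|_\Omega^2=48|\tau_1|_\Omega^2$, $|[\delta_\Omega\Omega]_{14}|_\Omega^2=|\tau_2|_\Omega^2$, and one has $[d\Omega]_k=\ep(\xi_k)$, $[\delta_\Omega\Omega]_k=-\iota(\xi_k)$; but to pass from $|\ep(\xi_k)|_\Omega^2$ and $|\iota(\xi_k)|_\Omega^2$ to $|\xi_k|_\Omega^2$ you need the distortion constants $|\ep(\xi_1)|_\Omega^2=4|\xi_1|_\Omega^2$, $|\ep(\xi_7)|_\Omega^2=\tfrac{3}{2}|\xi_7|_\Omega^2$, $|\ep(\xi_{27})|_\Omega^2=\tfrac{1}{2}|\xi_{27}|_\Omega^2$, $|\iota(\xi_7)|_\Omega^2=2|\xi_7|_\Omega^2$, $|\iota(\xi_{14})|_\Omega^2=\tfrac{1}{2}|\xi_{14}|_\Omega^2$, and these are \emph{not} consequences of part (i) or of the Bianchi-type identity of Lemma~\ref{Bianchi} (the latter only guarantees that the same $\tau_1$ appears in \eqref{torsion} and \eqref{cotorsion}); so the linear relations between the $\xi_k$ and the $\tau_k$ cannot be inverted from the data you invoke. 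Computing these five constants is precisely the core of the paper's proof, done by evaluating $\ep$ and $\iota$ on explicit test elements such as $e_1\otimes(e_2\llcorner\star_\Omega\Omega)\pm e_2\otimes(e_1\llcorner\star_\Omega\Omega)$ and exploiting transitivity of $\Gt$ on orthonormal pairs. Your first route (reading $a_k$ off an explicit formula for $\nabla^\Omega\Omega$ in terms of the torsion forms as in \cite{br06}) could be made rigorous, but as written it is only a pointer to the literature: the relevant formula is neither stated nor used, the norm distortion of the embedding $T\mapsto T(\Omega)$ is not computed, and none of the numbers $\tfrac{7}{4}$, $24$, $2$, $2$ is ever derived.
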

\begin{proof}
(i) Clearly
\ben
|d\Omega|_\Omega^2 = \tau_0^2 |\star_\Omega\!\Omega|_\Omega^2 + 9|\tau_1 \wedge \Omega|_\Omega^2 + |\tau_3|_\Omega^2,
\ee
which using $|\!\star_\Omega\!\Omega|_\Omega^2 =|\Omega|_\Omega^2=7$ and $|\tau_1\wedge\Omega|_\Omega^2 = 4 |\tau_1|_\Omega^2$ (cf.\ for instance equation (15) in \cite{wewi10}) yields the first equation. Similarly,
\ben
|\delta_\Omega\Omega|_\Omega^2 = |d \star_\Omega \Omega|_\Omega^2 = 16 | \tau_1 \wedge \star_\Omega \Omega|_\Omega^2 + |\tau_2 \wedge \Omega|_\Omega^2
\ee
as $|\tau_1 \wedge \star_\Omega \Omega|_\Omega^2 = 3 |\tau_1|_\Omega^2$ (cf.\ equation (15) in \cite{wewi10}) and $|\tau_2 \wedge \Omega|_\Omega^2=|\tau_2|_\Omega^2$, for $\Lambda^2_{14} = \{ \alpha \in \Lambda^2\,|\,\alpha\wedge\Omega=-\star_\Omega\alpha\}$.

\medskip

(ii) Let $\varepsilon: \Lambda^1 \otimes \Lambda^k \rightarrow \Lambda^{k+1}$ and $\iota: \Lambda^1 \otimes \Lambda^k \rightarrow \Lambda^{k-1}$ denote exterior resp.~interior multiplication. Then $d\Omega = \epsilon(\xi)$ and $\delta_\Omega\Omega = - \iota (\xi)$. Since $\varepsilon$ and $\iota$ are $\GL$-equivariant, one has more precisely
\ben
d \Omega = \ep(\xi_1) + \ep(\xi_7) + \ep(\xi_{27})
\ee
and
\ben
\delta_\Omega \Omega = -\iota(\xi_7) - \iota (\xi_{14}).
\ee
We need to calculate the length distortion of the maps $\xi$ and $\iota$ on the irreducible summands. We claim that
\ben
|\ep(\xi_1)|_\Omega^2 = 4|\xi_1|_\Omega^2\,,\;|\ep(\xi_7)|_\Omega^2 =\frac{3}{2} |\xi_7|_\Omega^2\,,\;|\ep(\xi_{27})|_\Omega^2 = \frac{1}{2}|\xi_{27}|_\Omega^2  
\ee
and
\ben
|\iota(\xi_7)|_\Omega^2 = 2 |\xi_7|_\Omega^2\,,\;|\iota(\xi_{14})|_\Omega^2 = \frac{1}{2} |\xi_{14}|_\Omega^2. 
\ee
To establish these we consider the map $f:\Lambda^1\otimes\Lambda^1\to\Lambda^1\otimes\Lambda^3_7$ which to $v\otimes w$ assigns $v\otimes(w\llcorner\star_\Omega\Omega)$. The module of symmetric endomorphisms $\odot^2$ which is spanned by $v\otimes w+w\otimes v$ can be decomposed into the tracefree endomorphisms $\odot^2_0$ and multiples of the identity. A ($\GL(7)$-)equivariant projection $\pi_0:\odot^2\to\odot^2_0$ is given by $\pi_0(a)=a-\mathrm{Tr}(a)\Id/7$. We want to compute $|\epsilon(f(\pi_0(a)))|^2$ and $|f(\pi_0(a))|^2$ for $a\in\odot^2$. It suffices to do this for elements of the form $e_i\otimes e_j+e_j\otimes e_i$ for some orthonormal basis $e_1,\ldots,e_7$ of $\Lambda^1$. Furthermore, since $\Gt$ acts transitively on pairs of orthonormal vectors, we need to consider the element $e_1\otimes e_2+e_2\otimes e_1$ only, which is already in $\odot^2_0$. Thus
\ben
|f(e_1\otimes e_2+e_2\otimes e_1)|^2=|e_1\otimes(e_2\llcorner\star_\Omega\Omega)+e_2\otimes(e_1\llcorner\star_\Omega\Omega)|^2=8
\ee
while
\ben
|\epsilon(f(e_1\otimes e_2+e_2\otimes e_1))|^2=|e_1\wedge(e_2\llcorner\star_\Omega\Omega)+e_2\wedge(e_1\llcorner\star_\Omega\Omega)|^2=4,
\ee
whence the distortion factor $1/2$ as claimed above. In the same vein, consider the projection $\pi^2_{14}:\Lambda^2\to\Lambda^2_{14}$ given by $\pi^2_{14}(\alpha)=(2\alpha-\star_\Omega(\alpha\wedge\Omega))/3$. Then
\ben
|f(e_1\otimes e_2-e_2\otimes e_1)|^2=|e_1\otimes(e_2\llcorner\star_\Omega\Omega)-e_2\otimes(e_1\llcorner\star_\Omega\Omega)|^2=8
\ee
and
\ben
|\iota(f(e_1\otimes e_2-e_2\otimes e_1))|^2=|e_1\llcorner(e_2\llcorner\star_\Omega\Omega)-e_2\llcorner(e_1\llcorner\star_\Omega\Omega)|^2=4,
\ee
giving again the distortion factor $1/2$. Either by proceeding as before or by using the transitivity of $\Gt$ on the sphere of its vector representation we deduce the remaining coefficients. Therefore
\ben
|d\Omega|_\Omega^2 = 4|\xi_1|_\Omega^2 + \frac{3}{2}|\xi_7|_\Omega^2 + \frac{1}{2} |\xi_{27}|_\Omega^2
\ee
and 
\ben
|\delta_\Omega\Omega|_\Omega^2 = 2|\xi_7|_\Omega^2 + \frac{1}{2}|\xi_{14}|_\Omega^2.
\ee
Comparing this with the formul{\ae}~\eqref{torsion} and~\eqref{cotorsion} we get:
\ben
|\xi_1|_\Omega^2 = \frac{7}{4} \tau_0^2\,,\;|\xi_7|_\Omega^2 = 24 |\tau_1|_\Omega^2\,,\; |\xi_{14}|_\Omega^2 = 2 |\tau_2|_\Omega^2\,,\; |\xi_{27}|_\Omega^2 = 2|\tau_3|_\Omega^2.
\ee
Since clearly 
\ben
|\nabla^\Omega\Omega|_\Omega^2 = |\xi_1|_\Omega^2 + |\xi_7|_\Omega^2 + |\xi_{14}|_\Omega^2 +|\xi_{27}|_\Omega^2
\ee
the result follows.
\end{proof}

\begin{rmk}
The previous proposition provides an alternative proof of the result of Fern\'andez and Gray mentioned in the introduction: For $\Omega \in \Omega^3_+(M)$ one has $\nabla^\Omega\Omega=0$ if and only if $d\Omega=\delta_\Omega\Omega=0$, since both equations are equivalent to $\tau_0=\tau_1=\tau_2=\tau_3=0$. By standard holonomy theory, $\nabla^\Omega\Omega=0$ is equivalent to $g_\Omega$ having holonomy contained in $\Gt$.
\end{rmk}
%
\subsection{Monotone quantities}
For any smooth family $\Omega_t$ we can write
\ben
\partial_t\Omega_t=3f_t\Omega_t+\star_{\Omega_t}(\alpha_t\wedge\Omega_t)+\gamma_t
\ee
for uniquely determined quantities $f_t\in C^\infty(M)$, $\alpha_t\in\Omega^1(M)$ and $\gamma_t\in\Omega^3_{27,\Omega_t}(M)$ depending smoothly on $t$. These are called the {\em deformation forms} of $\Omega_t$. In particular, the evolution of the associated volume form is given by
\ben
\partial_t\vol_{\Omega_t}=7f_t\vol_{\Omega_t},
\ee
see e.g.~\cite{br06}. For a solution $\Omega_t$ to~\eqref{DF} we have
\ben
g_{\Omega_t}(Q(\Omega_t),\Omega_t) = 3 f_t g_{\Omega_t}(\Omega_t,\Omega_t) = 21 f_t
\ee
and hence
\be\label{evol_vol}
\partial_t \vol_{\Omega_t}=\tfrac{1}{3} g_{\Omega_t}(Q(\Omega_t), \Omega_t) \vol_{\Omega_t}.
\ee
Alternatively, use that the differential of the map $\phi:\Lambda^3_+\to\Lambda^7$ sending $\Omega$ to $\vol_\Omega$ is given by
\be\label{phider}
D_\Omega\phi(\dot\Omega)=\tfrac{1}{3}\dot\Omega\wedge\star_\Omega\Omega,
\ee
cf.~\cite{hi01}. The {\em Hitchin functional} is defined by
\ben
\mathcal H : \Omega^3_+(M) \rightarrow \R\,,\; \Omega \mapsto \int_M \vol_\Omega,
\ee
i.e.\ it associates with $\Omega\in\Omega^3_+(M)$ its total volume. We find that the value of the Hitchin functional is monotone and convex along a solution to the Dirichlet energy flow:

\begin{prp}\label{H_monotone}
If $(\Omega_t)_{t \in [0,T)}$ is a solution to~\eqref{DF}, then
\ben
\frac{d}{dt} \mathcal H(\Omega_t) \leq 0\mbox{ and }\frac{d^2}{dt^2} \mathcal H(\Omega_t) \geq 0
\ee
for all $t \in [0,T)$. Further, $\frac{d}{dt}\bigr\vert_{t=t_0}\mathcal H(\Omega_t)= 0$ if and only if $\Omega_{t_0}$ is torsion-free.
\end{prp}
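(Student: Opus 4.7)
The plan is to express both derivatives of $\mc H$ along the flow in terms of $\mc D$ and $Q$. For the first derivative, I would differentiate under the integral sign using \eqref{phider} (or equivalently apply the pointwise identity \eqref{evol_vol}) to get
\ben
\tfrac{d}{dt}\mc H(\Omega_t) = \int_M D_{\Omega_t}\phi(Q(\Omega_t))\,\vol_{\Omega_t} = \tfrac{1}{3}\langle Q(\Omega_t),\Omega_t\rangle_{\Omega_t},
\ee
and then invoke the defining property of $Q$ as the negative $L^2$-gradient of $\mc D$ to rewrite this as $-\tfrac{1}{3}D_{\Omega_t}\mc D(\Omega_t)$. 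In other words, everything boils down to the radial variation of $\mc D$.

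The key observation is that $\mc D$ is homogeneous of degree $5/3$ in $\Omega$, so by Euler's relation $D_\Omega\mc D(\Omega)=\tfrac{5}{3}\mc D(\Omega)$. To prove the homogeneity, I would use that $g_{\lambda\Omega}=\lambda^{2/3}g_\Omega$ and $\vol_{\lambda\Omega}=\lambda^{7/3}\vol_\Omega$, which forces $\star_{\lambda\Omega}=\lambda^{(7-2k)/3}\star_\Omega$ on $k$-forms. A short calculation then yields $d(\lambda\Omega)=\lambda\,d\Omega$ and $\delta_{\lambda\Omega}(\lambda\Omega)=\lambda^{1/3}\delta_\Omega\Omega$, so that both $|d(\lambda\Omega)|^2_{\lambda\Omega}$ and $|\delta_{\lambda\Omega}(\lambda\Omega)|^2_{\lambda\Omega}$ carry a factor $\lambda^{-2/3}$. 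Multiplication by $\vol_{\lambda\Omega}=\lambda^{7/3}\vol_\Omega$ and integration yields $\mc D(\lambda\Omega)=\lambda^{5/3}\mc D(\Omega)$. Alternatively, one could deduce the same scaling from Proposition~\ref{d_delta_norm} by tracking how each individual torsion form $\tau_k$ transforms.

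Combining these two steps gives $\tfrac{d}{dt}\mc H(\Omega_t)=-\tfrac{5}{9}\mc D(\Omega_t)\leq 0$, and the equality characterisation is then immediate: $\tfrac{d}{dt}\mc H|_{t=t_0}=0$ iff $\mc D(\Omega_{t_0})=0$, which by Proposition~\ref{d_delta_norm} is equivalent to $\tau_0=\tau_1=\tau_2=\tau_3=0$, i.e.\ $\Omega_{t_0}$ being torsion-free. For the second derivative I would differentiate the first-derivative formula once more and use the standard gradient-flow identity $\tfrac{d}{dt}\mc D(\Omega_t)=D_{\Omega_t}\mc D(Q(\Omega_t))=-\|Q(\Omega_t)\|^2_{\Omega_t}$, which gives
\ben
\tfrac{d^2}{dt^2}\mc H(\Omega_t) = \tfrac{5}{9}\|Q(\Omega_t)\|^2_{\Omega_t}\geq 0.
\ee
The only non-formal ingredient is the scaling bookkeeping underlying the homogeneity of $\mc D$; once that is in hand the remainder follows automatically from the variational structure of the flow.
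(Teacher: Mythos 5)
Your proposal is correct and follows essentially the same route as the paper's proof: the evolution of the volume form gives $\tfrac{d}{dt}\mc H(\Omega_t)=-\tfrac{1}{3}D_{\Omega_t}\mc D(\Omega_t)$, Euler's formula for the $5/3$-homogeneous functional $\mc D$ yields $-\tfrac{5}{9}\mc D(\Omega_t)$, and differentiating once more with the gradient-flow identity gives $\tfrac{5}{9}\|Q(\Omega_t)\|^2_{\Omega_t}\geq 0$. The only difference is that you verify the homogeneity of $\mc D$ by an explicit (and correct) scaling computation, where the paper simply cites it from~\cite{wewi10}.
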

\begin{proof}
Using equation \eqref{evol_vol} we get
\begin{align*}
\frac{d}{dt} \mathcal H (\Omega_t) & = \int_M \frac{\partial}{\partial t} \vol_{\Omega_t}\\
& = \frac{1}{3} \int_M g_{\Omega_t}(Q(\Omega_t), \Omega_t)\vol_{\Omega_t}\\
& = - \tfrac{1}{3} D_{\Omega_t} \mathcal D(\Omega_t)
\end{align*}
Since $\mathcal D$ is positively homogeneous, i.e.~$\mathcal D(\lambda\Omega) = \lambda^{5/3} \mathcal D(\Omega)$ for $\lambda >0$, one has $D_\Omega\mathcal D(\Omega) = \tfrac{5}{3} \mathcal D(\Omega)$ by Euler's formula, cf.~the proof of Corollary 4.3 in~\cite{wewi10}. Hence
\begin{equation}\label{der_Hitchin}
\frac{d}{dt} \mathcal H (\Omega_t) = -\tfrac{5}{9} \mathcal D(\Omega_t) \leq 0
\end{equation}
with equality if and only if $\Omega_t$ is torsion-free. Furthermore,
\ben
\frac{d^2}{dt^2} \mathcal H (\Omega_t) = -\tfrac{5}{9}D_{\Omega_t}\mathcal D(Q(\Omega_t))=\tfrac{5}{9}\|Q(\Omega_t)\|^2_{\Omega_t}
\ee
which is always non-negative.
\end{proof}

Equation \eqref{der_Hitchin} has the following noteworthy consequence for a long-time solution to the Dirichlet energy flow: Suppose that $\Omega_t$ is a solution to \eqref{DF} on $[0,\infty)$. Then, since $\mc{D}(\Omega_t)$ is monotonely decreasing, the limit
\ben
\mc{D}_{\infty}:=\lim_{t \rightarrow \infty} \mc{D}(\Omega_t)\geq 0
\ee
exists. In fact, we have
\begin{cor}\label{D_infty}
If $(\Omega_t)_{t \in [0,\infty)}$ is a solution to \eqref{DF}, then $\mc{D}_\infty=0$.
\end{cor}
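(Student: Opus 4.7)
The plan is to combine the identity
\[
\frac{d}{dt}\mathcal H(\Omega_t) = -\tfrac{5}{9}\,\mathcal D(\Omega_t)
\]
from Proposition~\ref{H_monotone} with the monotonicity of $\mathcal D$ along~\eqref{DF}, together with the lower bound $\mathcal H \geq 0$ coming from the fact that $\vol_{\Omega_t}$ is pointwise positive.

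First I would note that along the negative gradient flow \eqref{DF},
\[
\frac{d}{dt}\mathcal D(\Omega_t) = D_{\Omega_t}\mathcal D(Q(\Omega_t)) = -\|Q(\Omega_t)\|^2_{\Omega_t} \leq 0,
\]
so $\mathcal D(\Omega_t)$ is non-increasing and the limit $\mathcal D_\infty \geq 0$ indeed exists. Next, integrating the identity from Proposition~\ref{H_monotone} gives
\[
\mathcal H(\Omega_T) - \mathcal H(\Omega_0) = -\tfrac{5}{9}\int_0^T \mathcal D(\Omega_t)\,dt.
\]
Since $\mathcal H(\Omega_t) = \int_M \vol_{\Omega_t} \geq 0$ and $\mathcal H$ is non-increasing, the left-hand side is bounded in $T$, hence
\[
\int_0^\infty \mathcal D(\Omega_t)\,dt < \infty.
\]

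Finally, I would argue by contradiction: if $\mathcal D_\infty > 0$, then monotonicity of $\mathcal D(\Omega_t)$ forces $\mathcal D(\Omega_t) \geq \mathcal D_\infty$ for all $t \geq 0$, which contradicts the finiteness of the integral. Hence $\mathcal D_\infty = 0$. No genuine obstacle is expected here; the only point to verify carefully is that $\mathcal D(\Omega_t)$ is actually monotone (which follows immediately from the gradient flow character of~\eqref{DF}) and that $\mathcal H$ is bounded below, both of which are essentially tautological.
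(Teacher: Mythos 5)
Your proof is correct and is essentially the paper's argument: both rest on the identity \eqref{der_Hitchin} together with $\mathcal H\geq 0$ and the monotonicity of $\mathcal D$, deriving a contradiction from $\mathcal D(\Omega_t)\geq\mathcal D_\infty>0$. The paper phrases the contradiction as $\mathcal H(\Omega_t)\leq\mathcal H(\Omega_0)-\tfrac{5}{9}\mathcal D_\infty t$ becoming negative in finite time, while you integrate the exact identity and contradict the resulting finiteness of $\int_0^\infty\mathcal D(\Omega_t)\,dt$; this is only a cosmetic repackaging of the same idea.
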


\begin{proof}
Assume to the contrary that $\mc{D}_\infty>0$. Then $\mc{D}(\Omega_t) \geq \mc{D}_\infty>0$ for all $t \in [0,\infty)$. Hence, by equation \eqref{der_Hitchin}, $\frac{d}{dt} \mc{H}(\Omega_t) \leq - \frac{5}{9} \mc{D}_\infty <0$ for all $t$, and therefore
\ben
\mc{H}(\Omega_t) \leq \mc{H}(\Omega_0) - \frac{5}{9} \mc{D}_\infty t.
\ee
In particular, $\mc{H}(\Omega_t)$ becomes negative in finite time. Contradiction!
\end{proof}

\begin{rmk}
As an example communicated to us by Joel Fine shows, long-time existence is not sufficient to imply convergence to a critical point, cf.~\cite{fine}. It is obtained by restricting the Dirichlet energy functional $\mc D$ to the space of $\SO(4)$-invariant forms on $\R^4\times SO(3)$. Using Lemma~\ref{invariance}, the flow equations can be reduced to a system of nonlinear ODEs which can be explicitly solved and whose solutions project down to $T^4\times SO(3)$. This is related to the failure of the Dirichlet energy functional to satisfy the Palais--Smale condition. If, however, $\lim_{t \rightarrow \infty} \Omega_t = \Omega_\infty \in \Omega^3_+(M)$, say w.r.t.~the $C^1$-topology, then Corollary \ref{D_infty} suffices to conclude that $\Omega_\infty$ is torsion-free.
\end{rmk}

As for the Dirichlet energy functional, we may set
\ben
\mc{H}_\infty:=\lim_{t \rightarrow \infty} \mc{H}(\Omega_t) \geq 0
\ee
for a solution $\Omega_t$ to \eqref{DF} on $[0,\infty)$. Here two cases may occur:
\begin{enumerate}
\item $\mc{H}_\infty >0$
\item $\mc{H}_\infty = 0$
\end{enumerate}
A prototypical example for the first case is a solution converging to a torsion-free $\Gt$-structure as $t \rightarrow \infty$. Such solutions exist as a consequence of Theorem \ref{stab}, our stability result for the Dirichlet energy flow. A solution fitting into the second case is provided by Fine's example, cf.~\cite{fine}.

\medskip

A further consequence of equation \eqref{der_Hitchin} is that the value of the Hitchin functional decays at most linearly along a solution to the Dirichlet energy flow:

\begin{cor}
If $(\Omega_t)_{t \in [0,T)}$ is a solution to \eqref{DF}, then
\ben
\mc{H}(\Omega_0)\geq\mathcal H(\Omega_t) \geq \mathcal H(\Omega_0) - \tfrac{5}{9} \mathcal D(\Omega_0)t
\ee
for all $t \in [0,T)$. In particular, for all $\varepsilon>0$ there exists $\delta >0$ such that $\mathcal H(\Omega_t) \geq \delta $ for all $t \in [0, t_0 - \varepsilon]$ with $t_0 = \min \{T, \frac{9}{5} \frac{\mathcal H(\Omega_0)}{\mathcal D(\Omega_0)}\}$.
\end{cor}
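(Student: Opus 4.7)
The plan is to read both inequalities directly off the identity~\eqref{der_Hitchin}, combined with the fact that $\mc D$ is itself monotonely decreasing along the flow. First, the upper bound $\mc H(\Omega_t) \leq \mc H(\Omega_0)$ is immediate from Proposition~\ref{H_monotone}. For the lower bound, I would use that $\Omega_t$ solves the negative $L^2$-gradient flow of $\mc D$, so that $\frac{d}{dt}\mc D(\Omega_t) = -\|Q(\Omega_t)\|_{\Omega_t}^2 \leq 0$ and hence $\mc D(\Omega_t) \leq \mc D(\Omega_0)$ for every $t \in [0,T)$. Substituting into~\eqref{der_Hitchin} gives $\frac{d}{dt}\mc H(\Omega_t) \geq -\tfrac{5}{9}\mc D(\Omega_0)$, and integration from $0$ to $t$ yields the claimed linear lower bound.

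For the second assertion, set $t^* := \tfrac{9}{5}\tfrac{\mc H(\Omega_0)}{\mc D(\Omega_0)}$ (interpreted as $+\infty$ when $\mc D(\Omega_0)=0$), so that by construction $t_0 \leq t^*$. If $\mc D(\Omega_0)>0$, then for $t \in [0,t_0-\varepsilon]$ one has $t \leq t^*-\varepsilon$, and plugging this into the linear lower bound yields
$$
\mc H(\Omega_t) \geq \mc H(\Omega_0) - \tfrac{5}{9}\mc D(\Omega_0)\,(t^* - \varepsilon) = \tfrac{5}{9}\mc D(\Omega_0)\,\varepsilon,
$$
so $\delta := \tfrac{5}{9}\mc D(\Omega_0)\,\varepsilon$ works. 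In the degenerate case $\mc D(\Omega_0)=0$, the initial datum $\Omega_0$ is torsion-free and hence a critical point of $\mc D$; the flow is stationary, $\mc H(\Omega_t) \equiv \mc H(\Omega_0) > 0$, and any $\delta \in (0, \mc H(\Omega_0)]$ suffices.

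There is no genuine obstacle here: once~\eqref{der_Hitchin} and the monotonicity of $\mc D$ along its own gradient flow are in hand, the whole argument amounts to a one-line integration and a comparison with $t^*$. The only point requiring a little care is to use the monotonicity of $\mc D$, not just that of $\mc H$, in order to convert the differential identity of Proposition~\ref{H_monotone} into a linear bound depending solely on the initial data, together with the remark about the degenerate case that fixes the meaning of the expression defining $t_0$ when $\mc D(\Omega_0) = 0$.
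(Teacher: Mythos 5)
Your proof is correct and follows essentially the same route as the paper: monotonicity of $\mc D$ along its own negative gradient flow gives $\mc D(\Omega_t)\leq\mc D(\Omega_0)$, which via equation~\eqref{der_Hitchin} yields $\frac{d}{dt}\mc H(\Omega_t)\geq-\tfrac{5}{9}\mc D(\Omega_0)$ and the claim by integration. Your explicit choice of $\delta=\tfrac{5}{9}\mc D(\Omega_0)\varepsilon$ and the remark on the degenerate case $\mc D(\Omega_0)=0$ merely spell out what the paper leaves implicit.
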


\begin{proof}
Since the Dirichlet energy flow is the negative gradient flow of $\mathcal D$, one clearly has 
\ben
\frac{d}{dt} \mathcal D(\Omega_t) \leq 0
\ee
for all $t \in [0,T)$, in particular $\mathcal D(\Omega_t) \leq \mathcal D(\Omega_0)$. Hence by equation \eqref{der_Hitchin}
\ben
\frac{d}{dt} \mathcal H(\Omega_t) \geq -\frac{5}{9} \mathcal D(\Omega_0),
\ee 
and the claim follows by integration.
\end{proof}

\begin{rmk}
If one knew exponential decay of $\mathcal D (\Omega_t)$ for a solution to \eqref{DF} on $[0,\infty)$ beforehand, then $\mathcal H(\Omega_t)$ would be bounded from below: Assuming $\mathcal D(\Omega_t) \leq C e^{-\lambda t}$ for constants $C,\lambda>0$ and using equation \eqref{der_Hitchin} once again, one gets 
\begin{align*}
\mathcal H(\Omega_t) &\geq \mathcal H(\Omega_0) - \frac{5}{9} \int_0^t Ce^{-\lambda\tau} d\tau\\
&= \mathcal H(\Omega_0) - \frac{5}{9} \frac{C}{\lambda} (1 - e^{-\lambda t})\\
& \geq \mathcal H(\Omega_0) - \frac{5}{9} \frac{C}{\lambda}
\end{align*}
for all $t \in [0, \infty)$. This would be particularly useful if one could choose $C$ and $\lambda$ in such a way that $\delta:=\mathcal H(\Omega_0) - \frac{5}{9} \frac{C}{\lambda}>0$.
\end{rmk}

In~\cite{br06} it is shown that the scalar curvature of the metric $g_\Omega$ is given by
\ben
s_{g_\Omega} = 12 \delta_\Omega \tau_1 + \frac{21}{8} \tau_0^2 + 30 |\tau_1|_\Omega^2 - \frac{1}{2}|\tau_2|_\Omega^2 -\frac{1}{2} |\tau_3|_\Omega^2
\ee
(cf.~(4.28) loc.~cit.). Thus, by Stokes theorem the total scalar curvature 
\ben
\mc{S}(\Omega):=\int_Ms_{g_\Omega}\vol_\Omega
\ee
of $g_\Omega$
is given by
\be\label{scalcurv}
\mc{S}(\Omega)=\int_M \Bigl(\frac{21}{8} \tau_0^2 + 30 |\tau_1|_\Omega^2 - \frac{1}{2}|\tau_2|_\Omega^2 -\frac{1}{2} |\tau_3|_\Omega^2\Bigr) \vol_\Omega.
\ee
On the other hand, by Proposition~\ref{d_delta_norm}, we have
\ben
\mathcal{D}(\Omega)= \int_M \Bigl( \frac{7}{2} \tau_0^2  + 42 |\tau_1|_\Omega^2 + \frac{1}{2}|\tau_2|_\Omega^2 + \frac{1}{2} |\tau_3|_\Omega^2\Bigr) \vol_\Omega.
\ee
Comparing coefficients immediately yields 
\begin{lem}
Let $\Omega \in \Omega^3_+(M)$ be a positive 3-form. Then $|\mc{S}(\Omega)|\leq\mc{D}(\Omega)$.
\end{lem}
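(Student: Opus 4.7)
The plan is to compare the two integrands termwise using the pointwise non-negativity of $\tau_0^2$, $|\tau_1|^2_\Omega$, $|\tau_2|^2_\Omega$ and $|\tau_3|^2_\Omega$. The inequality $|\mc{S}(\Omega)| \leq \mc{D}(\Omega)$ is equivalent to the two inequalities $\mc{D}(\Omega) - \mc{S}(\Omega) \geq 0$ and $\mc{D}(\Omega) + \mc{S}(\Omega) \geq 0$, and each of these should follow immediately from subtracting (respectively adding) the explicit expression \eqref{scalcurv} for $\mc{S}(\Omega)$ and the one for $\mc{D}(\Omega)$ derived from Proposition~\ref{d_delta_norm}.

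More concretely, I would first compute
\ben
\mc{D}(\Omega) - \mc{S}(\Omega) = \int_M \Bigl( \bigl(\tfrac{7}{2} - \tfrac{21}{8}\bigr) \tau_0^2 + (42 - 30) |\tau_1|_\Omega^2 + |\tau_2|_\Omega^2 + |\tau_3|_\Omega^2 \Bigr) \vol_\Omega,
\ee
and verify that each coefficient is non-negative, so that the whole integrand is non-negative. Similarly, I would compute
\ben
\mc{D}(\Omega) + \mc{S}(\Omega) = \int_M \Bigl( \bigl(\tfrac{7}{2} + \tfrac{21}{8}\bigr) \tau_0^2 + (42 + 30) |\tau_1|_\Omega^2 \Bigr) \vol_\Omega,
\ee
where the $|\tau_2|_\Omega^2$ and $|\tau_3|_\Omega^2$ terms cancel exactly; again the remaining coefficients are non-negative. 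Both inequalities then follow at once by integration.

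There is no real obstacle here; the statement is purely algebraic once one has both representations of $\mc{S}(\Omega)$ and $\mc{D}(\Omega)$ in terms of the torsion forms. The only thing worth flagging is that the cancellation of the $\tau_2$ and $\tau_3$ terms in $\mc{D}(\Omega) + \mc{S}(\Omega)$ shows that the bound $\mc{S}(\Omega) \geq -\mc{D}(\Omega)$ is sharp in the sense that equality is approached when $\tau_0$ and $\tau_1$ vanish, i.e.\ along $\Gt$-structures of pure type $\tau_2+\tau_3$ (``closed'' and ``balanced'' mixed). One could also record, if desired, that equality in $\mc{S}(\Omega) \leq \mc{D}(\Omega)$ forces $\tau_0 = 0$, $\tau_1 = 0$, $\tau_2 = 0$, $\tau_3 = 0$, and hence characterises torsion-free $\Gt$-structures, though this is not part of the stated assertion.
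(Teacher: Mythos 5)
Your proof is correct and is essentially the paper's own argument: the paper simply compares the coefficients of the expressions for $\mc{S}(\Omega)$ and $\mc{D}(\Omega)$ in terms of the torsion forms, which is exactly your termwise check of $\mc{D}\pm\mc{S}\geq 0$ (note $\mc{D}-\mc{S}$ is precisely the functional $\mc{C}$ introduced right afterwards). Your side remarks on the equality cases are also consistent with this.
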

Using the monotonicity of $\mc{D}$ and Corollary \ref{D_infty} we obtain
\begin{cor}
The absolute value of the total scalar curvature $\mc{S}(\Omega_t)$ is bounded by a monotonely decreasing quantity along a solution $(\Omega_t)_{t \in [0,T)}$ to~\eqref{DF}. If $\Omega_t$ is defined on $[0,\infty)$, then $\lim_{t \rightarrow \infty} \mc{S}(\Omega_t)=0$.
\end{cor}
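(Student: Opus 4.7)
The plan is to combine the three ingredients already in hand: the pointwise inequality provided by the preceding lemma, the fact that \eqref{DF} is a negative gradient flow of $\mc{D}$, and the long-time decay given by Corollary \ref{D_infty}. No new calculation is required beyond linking these.

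First I would note that, since \eqref{DF} is the negative $L^2$-gradient flow of $\mc D$, we have $\frac{d}{dt}\mc D(\Omega_t)=-\|Q(\Omega_t)\|_{\Omega_t}^2\leq 0$, so the map $t\mapsto \mc D(\Omega_t)$ is monotonely decreasing on $[0,T)$. Combining this with the preceding lemma $|\mc S(\Omega)|\leq \mc D(\Omega)$ applied to $\Omega=\Omega_t$ yields
\[
|\mc S(\Omega_t)|\leq \mc D(\Omega_t)\leq \mc D(\Omega_0),
\]
and the right-hand function $t\mapsto \mc D(\Omega_t)$ is the required monotonely decreasing bound.

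For the second claim, assume the flow exists on $[0,\infty)$. Corollary \ref{D_infty} asserts that $\lim_{t\to\infty}\mc D(\Omega_t)=0$. Using again the lemma, $0\leq |\mc S(\Omega_t)|\leq \mc D(\Omega_t)$, and the squeeze theorem gives $\lim_{t\to\infty}\mc S(\Omega_t)=0$.

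Since the argument is essentially a citation of the lemma together with monotonicity of $\mc D$ along its own negative gradient flow and the corollary on vanishing energy in the limit, I do not anticipate any obstacle; the whole corollary is a two-line consequence of what has already been established.
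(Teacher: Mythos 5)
Your argument is correct and coincides with the paper's: the corollary is stated there as an immediate consequence of the lemma $|\mc{S}(\Omega)|\leq\mc{D}(\Omega)$, the monotonicity of $\mc{D}$ along its negative gradient flow, and Corollary \ref{D_infty}, exactly as you combine them.
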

If we define 
\ben
\mc{C}(\Omega):=\frac{1}{2}\int_M | \nabla^\Omega \Omega|_\Omega^2 \vol_\Omega
\ee 
we get from equation~\eqref{D1_density}
and from equation~\eqref{D2_density}
\begin{align*}
\mathcal{C}(\Omega) &= \int_M \Bigl( \frac{7}{8} \tau_0^2  + 12 |\tau_1|_\Omega^2 + |\tau_2|_\Omega^2 + |\tau_3|_\Omega^2\Bigr) \vol_\Omega\\
&= \mathcal{D}(\Omega) + \int_M \Bigl(-\frac{21}{8} \tau_0^2 -30 |\tau_1|_\Omega^2 + \frac{1}{2}|\tau_2|_\Omega^2 + \frac{1}{2} |\tau_3|_\Omega^2\Bigr) \vol_\Omega\\
&= \mathcal{D}(\Omega) - \mathcal{S}(\Omega). 
\end{align*}
Furthermore, we remark that
\ben
2\mc{C}(\Omega)+7\mc{H}(\Omega)=\|\Omega\|^2_{W^{1,2}_\Omega},
\ee
whence
\ben
0\leq\|\Omega\|^2_{W^{1,2}_\Omega}\leq4\mc{D}(\Omega)+7\mc{H}(\Omega)\leq8\|\Omega\|^2_{W^{1,2}_\Omega}.
\ee
In particular, we find that along a solution to the Dirichlet energy flow:

\begin{prp}
Let $(\Omega_t)_{t \in [0,T)}$ be a solution to~\eqref{DF}. Then
\ben
\|\Omega_t\|^2_{W^{1,2}_{\Omega_t}}\leq C_t\leq C_0
\ee
for the monotonely decreasing bound $C_t:=4\mc{D}(\Omega_t)+7\mc{H}(\Omega_t)$. Furthermore, one has $\frac{d}{dt}\bigr\vert_{t=t_0} C_t=0$ \iff $\Omega_{t_0}$ is torsion-free.
\end{prp}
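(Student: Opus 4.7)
The proof proposal has three short parts, all of which build directly on computations already established in the excerpt.

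\textbf{Step 1 (pointwise bound).} The inequality $\|\Omega_t\|^2_{W^{1,2}_{\Omega_t}}\leq C_t$ is immediate from the displayed chain of inequalities just before the proposition, namely
\[
0\leq\|\Omega\|^2_{W^{1,2}_\Omega}\leq 4\mc{D}(\Omega)+7\mc{H}(\Omega),
\]
applied pointwise in $t$ to $\Omega=\Omega_t$. So no work is needed here; the definition of $C_t$ was engineered precisely to make this step tautological.

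\textbf{Step 2 (monotonicity $C_t\leq C_0$).} I would differentiate $C_t$ and use the two standard differentiation rules available from what precedes. Since \eqref{DF} is by definition the negative $L^2$-gradient flow of $\mc D$, the defining identity $D_\Omega\mc D(\dot\Omega)=-\langle Q(\Omega),\dot\Omega\rangle_\Omega$ with $\dot\Omega=Q(\Omega_t)$ gives
\[
\frac{d}{dt}\mc D(\Omega_t)=-\|Q(\Omega_t)\|^2_{\Omega_t}\leq 0.
\]
Combined with Proposition~\ref{H_monotone}, which states that $\frac{d}{dt}\mc H(\Omega_t)=-\tfrac{5}{9}\mc D(\Omega_t)\leq 0$, I obtain
\[
\frac{d}{dt}C_t=4\frac{d}{dt}\mc D(\Omega_t)+7\frac{d}{dt}\mc H(\Omega_t)=-4\|Q(\Omega_t)\|^2_{\Omega_t}-\tfrac{35}{9}\mc D(\Omega_t)\leq 0,
\]
so $C_t$ is monotonely non-increasing and in particular $C_t\leq C_0$.

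\textbf{Step 3 (equality characterisation).} Both summands on the right hand side of the preceding display are non-positive, so $\frac{d}{dt}\big|_{t=t_0}C_t=0$ forces each of them to vanish. The vanishing of the second term means $\mc D(\Omega_{t_0})=0$, which (since $\mc D$ is a non-negative integral whose integrand is the sum of squared norms of the torsion forms, by Proposition~\ref{d_delta_norm}) is equivalent to $\tau_0=\tau_1=\tau_2=\tau_3=0$ at $\Omega_{t_0}$, i.e.\ to $\Omega_{t_0}$ being torsion-free. Conversely, if $\Omega_{t_0}$ is torsion-free then it is a critical point of $\mc D$, whence $Q(\Omega_{t_0})=0$ and $\mc D(\Omega_{t_0})=0$, so both contributions vanish and $\frac{d}{dt}\big|_{t=t_0}C_t=0$.

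There is no real obstacle; the only thing to be careful about is to note that both non-positive terms must vanish simultaneously, and that the second of them already suffices to detect torsion-freeness via Proposition~\ref{d_delta_norm}, so the characterisation is automatic.
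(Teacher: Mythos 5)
Your proposal is correct and follows essentially the same route as the paper: the pointwise bound is the chain of inequalities preceding the proposition, and the monotonicity and equality characterisation come from writing $\frac{d}{dt}C_t=4\frac{d}{dt}\mc{D}(\Omega_t)+7\frac{d}{dt}\mc{H}(\Omega_t)$ as a sum of two non-positive terms, with Proposition~\ref{H_monotone} (equivalently, $\mc D(\Omega_{t_0})=0$ via equation~\eqref{der_Hitchin}) detecting torsion-freeness. Your only deviation is to spell out $\frac{d}{dt}\mc D(\Omega_t)=-\|Q(\Omega_t)\|^2_{\Omega_t}$ explicitly, which the paper leaves implicit; the argument is the same.
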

\begin{proof}
The first assertion follows directly from the discussion above. Secondly, $\frac{d}{dt} C_t=4\frac{d}{dt}\mc{D}(\Omega_t)+7\frac{d}{dt}\mc{H}(\Omega_t)\leq0$ with equality \iff $\frac{d}{dt}\mc{D}(\Omega_t)=0$ and $\frac{d}{dt}\mc{H}(\Omega_t)=0$, whence the result by Proposition~\ref{H_monotone}.
\end{proof}
%
\subsection{The generalised Dirichlet energy flow}
The energy functionals $\mc{D}$ and $\mc{C}$ considered above are special instances of the functional
\ben
\mathcal D_{\lambda}:= \sum_{i=0}^3 \nu_i \mathcal D_i
\ee
with
\ben
\mathcal D_i(\Omega) := \frac{1}{2} \int_M |\tau_i|_\Omega^2 \vol_\Omega.
\ee
and $\nu=(\nu_0,\nu_1,\nu_2,\nu_3) \in \R^4$. More specifically, one has
\ben
\mathcal D = 7 \mathcal D_0 + 84 \mathcal D_1 + \mathcal D_2 + \mathcal D_3
\ee
and
\ben
\mathcal C=\frac{7}{4} \mathcal D_0 + 24 \mathcal D_1 + 2\mathcal D_2 + 2\mathcal D_3.
\ee
We call the functional $\mathcal{D}_\nu$ the {\em generalised Dirichlet energy functional} associated with the parameter $\nu \in \R^4$. The aim of this section is to further analyse this family of functionals. In particular, we prove generalised versions of Theorem~\ref{stex} and Theorem~\ref{stab} for $\mc{D}_\nu$ for $\nu\in\R^4_+$.

\medskip

Set $Q_i(\Omega):= - \grad \mathcal D_i(\Omega)$, $i=0,1,2,3$ and $Q_\nu(\Omega):= - \grad \mathcal D_\nu(\Omega)$ for $\nu \in \R^4$. The functional $\mathcal{D}_\nu$ shares the same basic properties with $\mathcal{D}$: It is $\Diff(M)_+$-invariant and positively homogeneous, i.e.~$\mathcal{D}_\nu(\mu \Omega) = \mu^{\frac{5}{3}} \mathcal{D}_\nu(\Omega)$ for $\mu\in \R_+$. 

\medskip

Next we consider the negative gradient flow of the generalised Dirichlet energy functional
\begin{equation}\label{gDF}
\tag{$\text{DF}_\nu$}
\frac{\partial}{\partial t}\, \Omega_t = Q_\nu(\Omega_t) 
\end{equation}
for $\nu \in \R^4$, subject to some initial condition $\Omega_0 \in \Omega^3_+(M)$. We call the flow equation \eqref{gDF} the {\em generalised Dirichlet energy flow}. 

\medskip

For $\nu \in \R^4_+$ the generalised Dirichlet energy flow behaves much like the ordinary Dirichlet energy flow. In this case Euler's formula implies as for $Q$ (corresponding to $\mc{D}$) that $Q_\nu(\Omega)=0$ holds if and only if $\Omega$ is torsion-free. As a first result we have:

\begin{lem}
The flow equation \eqref{gDF} is weakly parabolic for $\nu \in \R^4_{\geq 0}$, i.e.
\ben
-g_\Omega( \sigma(D_\Omega Q_\nu)(x,\xi)\dot \Omega,\dot \Omega) \geq 0
\ee
for all $x \in M$, $\xi \in T^*_xM$ and $\dot \Omega \in \Lambda^3T^*_xM$.
\end{lem}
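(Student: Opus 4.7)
The plan is to exploit the variational structure of $\mathcal{D}_\nu$. Since taking the negative $L^2$-gradient is linear and $\mathcal{D}_\nu = \sum_{i=0}^3 \nu_i\mathcal{D}_i$, one obtains $Q_\nu = \sum_{i=0}^3 \nu_i Q_i$ with $Q_i := -\grad\mathcal{D}_i$, and hence
$$\sigma(D_\Omega Q_\nu)(x,\xi) = \sum_{i=0}^3 \nu_i\,\sigma(D_\Omega Q_i)(x,\xi).$$
All $\nu_i$ being nonnegative, the first reduction is to prove $-g_\Omega(\sigma(D_\Omega Q_i)(x,\xi)\dot\Omega,\dot\Omega)\geq 0$ separately for each $i\in\{0,1,2,3\}$.

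To handle an individual $i$, I would regard $\tau_i$ as a (nonlinear) first-order differential operator $T_i: \Omega^3_+(M)\to\Omega^{p_i}_{q_i}(M)$, $T_i(\Omega):=\tau_i$. That $T_i$ is genuinely first order is visible from \eqref{torsion} and \eqref{cotorsion}: each $\tau_i$ is extracted from $d\Omega$ and $d\star_\Omega\Omega$ by zeroth-order $\Gt$-equivariant projections and algebraic inversions depending on $\Omega$. Then $\mathcal{D}_i(\Omega)=\tfrac{1}{2}\|T_i(\Omega)\|_\Omega^2$, and differentiation should yield
$$D_\Omega\mathcal{D}_i(\dot\Omega) = \langle D_\Omega T_i(\dot\Omega), T_i(\Omega)\rangle_\Omega + R_i(\Omega;\dot\Omega),$$
where the remainder $R_i$ collects the contributions from the linearisations of $g_\Omega$, $\star_\Omega$, and $\vol_\Omega$. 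By \eqref{thetader}, \eqref{phider} and the analogous formulas for the metric and inner products, these variations are algebraic in $\dot\Omega$, so $R_i$ is of order zero in $\dot\Omega$.

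From this one concludes
$$Q_i(\Omega) = -(D_\Omega T_i)^*T_i(\Omega) + L_i(\Omega),$$
where $(D_\Omega T_i)^*$ is the formal $g_\Omega$-adjoint of $D_\Omega T_i$ and $L_i$ is a nonlinear differential operator in $\Omega$ of order at most one. Hence $Q_i$ is second order and its top-order part linearises to $-(D_\Omega T_i)^*\circ(D_\Omega T_i)$, giving
$$\sigma(D_\Omega Q_i)(x,\xi) = -\sigma(D_\Omega T_i)(x,\xi)^*\circ\sigma(D_\Omega T_i)(x,\xi).$$
Pairing with $\dot\Omega$ then gives
$$-g_\Omega\bigl(\sigma(D_\Omega Q_i)(x,\xi)\dot\Omega,\dot\Omega\bigr) = \bigl|\sigma(D_\Omega T_i)(x,\xi)\dot\Omega\bigr|_\Omega^2\geq 0,$$
and summing over $i$ with weights $\nu_i\geq 0$ finishes the argument.

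The principal obstacle is only notational: one must carefully confirm that the remainders $R_i$ and $L_i$ contribute only to the zeroth- and first-order parts of the linearisation of $Q_i$, so that the principal symbol is indeed $-(D_\Omega T_i)^*\circ(D_\Omega T_i)$. Apart from this bookkeeping, the lemma is the general fact that the $L^2$-gradient flow of the squared $L^2$-norm of a first-order differential operator is automatically weakly parabolic, with principal symbol equal to that of the associated Bochner-type Laplacian.
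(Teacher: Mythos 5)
Your argument is correct and rests on the same mechanism as the paper's proof: split $Q_\nu=\sum_i\nu_iQ_i$ and observe that each $\mc D_i$ is half the squared $L^2$-norm of a first-order operator in $\Omega$, so that the second-order part of $D_\Omega Q_i$ has the form $-\sigma^*\sigma$ and pairs nonnegatively against $-g_\Omega(\cdot,\dot\Omega)$. The difference is one of execution: where you invoke the general principle abstractly for $T_i(\Omega)=\tau_i(\Omega)$ and relegate the lower-order bookkeeping (variations of $g_\Omega$, $\star_\Omega$, $\vol_\Omega$ and of the coefficients of the adjoint, all algebraic in $\dot\Omega$ by \eqref{thetader} and \eqref{phider}) to a remark, the paper makes the structure explicit: it first uses Proposition~\ref{d_delta_norm} to rewrite $7\mc D_0$, $36\mc D_1$, $\mc D_2$, $\mc D_3$ as $\tfrac12\int|[d\Omega]_k|^2$ resp.\ $\tfrac12\int|[\delta_\Omega\Omega]_k|^2$, and then records the four principal symbols concretely (citing the linearisations of \cite{wewi10}), e.g.\ $-\sigma(D_\Omega Q_2)(x,\xi)\dot\Omega=p_\Omega(\xi\wedge[\xi\llcorner p_\Omega\dot\Omega]_{14})$, which are visibly of the form $\sigma^*\sigma$ and which are reused later in the DeTurck and stability arguments, whereas your abstract route proves weak parabolicity but does not supply these formulas. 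One small caveat: your identity $\sigma(D_\Omega Q_i)=-\sigma(D_\Omega T_i)^*\circ\sigma(D_\Omega T_i)$ is convention-dependent (it holds with the standard symbol convention in which $\sigma(P^*)(\xi)=\sigma(P)(\xi)^*$ for first-order $P$, the convention the paper's explicit formulas use), so when filling in the bookkeeping you should fix that convention to avoid a spurious sign.
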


\begin{proof}
According to Proposition \ref{d_delta_norm} one has
\ben
|[d\Omega]_1|_\Omega^2 = 7 \tau_0^2,\, |[d\Omega]_7|_\Omega^2 = 36|\tau_1|_\Omega^2,\, |[d\Omega]_{27}|_\Omega^2 = |\tau_3|_\Omega^2
\ee
and
\ben
|[\delta_\Omega\Omega]_{7}|^2 = 48|\tau_1|_\Omega^2,\,|[\delta_\Omega\Omega]_{14}|^2 = |\tau_2|_\Omega^2.
\ee 
Therefore
\begin{align*}
7 \cdot \mathcal D_0(\Omega) &= \frac{1}{2} \int_M |[d\Omega]_1|_\Omega^2 \vol_\Omega,\\
36 \cdot \mathcal D_1(\Omega) &= \frac{1}{2} \int_M |[d\Omega]_7|_\Omega^2 \vol_\Omega,\\
\mathcal D_3(\Omega) &= \frac{1}{2} \int_M |[d\Omega]_{27}|_\Omega^2 \vol_\Omega
\end{align*}
and
\ben
48 \cdot \mathcal D_1(\Omega) = \frac{1}{2} \int_M |[\delta_\Omega\Omega]_7|_\Omega^2 \vol_\Omega,\,
\mathcal D_2(\Omega) = \frac{1}{2} \int_M |[\delta_\Omega\Omega]_{14}|_\Omega^2 \vol_\Omega.
\ee
Linearising as in \cite {wewi10} we get
\begin{alignat*}{2}
-\sigma (D_\Omega Q_0) (x,\xi) \dot \Omega &=  \frac{1}{7}\xi \llcorner [\xi \wedge \dot \Omega ]_1,\quad &-\sigma (D_\Omega Q_1) (x,\xi) \dot \Omega &=  \frac{1}{36}\xi \llcorner [\xi \wedge \dot \Omega ]_7,\\
-\sigma (D_\Omega Q_2) (x,\xi) \dot \Omega &=  p_\Omega(\xi \wedge [\xi \llcorner p_\Omega \dot \Omega ]_{14}),\quad & -\sigma (D_\Omega Q_3) (x,\xi) \dot \Omega &=  \xi \llcorner [\xi \wedge \dot \Omega ]_{27}.
\end{alignat*}
Now for $k=1,7,27$ we have for $\xi \in T_x^*M$
\ben
g_\Omega(\xi \llcorner [ \xi \wedge \dot \Omega]_k, \dot \Omega ) = |[\xi \wedge \dot \Omega]_k|_\Omega^2 \geq 0
\ee
and for $k=14$
\ben
g_\Omega( p_\Omega(\xi \wedge [\xi \llcorner p_\Omega \dot \Omega ]_{14}), \dot \Omega) = |[\xi \llcorner p_\Omega \dot \Omega ]_{14}|_\Omega^2 \geq 0.
\ee
Since $D_\Omega Q_\nu = \sum_{i=0}^3 \nu_i D_\Omega Q_i$, the result follows.
\end{proof}

Breaking the diffeomorphism invariance one gets: 

\begin{thm}\label{nuste}
The generalised Dirichlet energy flow $\partial_t \Omega_t = Q_\nu(\Omega_t)$ has a unique short-time solution for $\nu\in\R^4_+$ and any initial condition $\Omega_0 \in \Omega^3_+(M)$.
\end{thm}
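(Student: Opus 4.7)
The strategy is to mimic the proof of Theorem~\ref{stex} from \cite{wewi10} via a DeTurck-type trick. The preceding lemma establishes that $\partial_t \Omega_t = Q_\nu(\Omega_t)$ is only weakly parabolic: its symbol is non-negative but has a non-trivial kernel coming from the $\Diff(M)_+$-invariance of $\mc{D}_\nu$. To bypass this, I would introduce a gauge-fixing vector field $V(\Omega)$, chosen so that the modified equation
\ben
\frac{\partial}{\partial t}\, \Omega_t = Q_\nu(\Omega_t) + \mc{L}_{V(\Omega_t)} \Omega_t
\ee
becomes strictly parabolic. Standard results (e.g.\ by the Eidelman--Solonnikov parabolic theory, or via the inverse function theorem in H\"older spaces) then furnish a unique short-time solution $\tilde\Omega_t$ to the modified flow. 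Composing with the one-parameter family of diffeomorphisms $\varphi_t$ generated by $-V(\tilde\Omega_t)$ yields a solution $\Omega_t = \varphi_t^*\tilde\Omega_t$ to \eqref{gDF}, and uniqueness follows from the uniqueness of solutions to the modified flow combined with the uniqueness of the DeTurck vector field flow.

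The key step is to verify that, for $\nu \in \R^4_+$, the modified operator is indeed strictly elliptic. From the symbol computation in the lemma, $\ker \sigma(D_\Omega Q_\nu)(x,\xi)$ consists of $\dot\Omega$ satisfying $[\xi \wedge \dot\Omega]_k = 0$ for $k=1,7,27$ together with $[\xi \llcorner p_\Omega \dot\Omega]_{14} = 0$. Since $\Lambda^4 = \Lambda^4_1 \oplus \Lambda^4_7 \oplus \Lambda^4_{27}$, the first three conditions together are equivalent to $\xi \wedge \dot\Omega = 0$, which means exactly that $\dot\Omega$ lies in the image of the principal symbol of the infinitesimal action $X \mapsto \mc{L}_X \Omega$ of $\Diff(M)_+$ at $\Omega$. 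The remaining condition on the $\Lambda^2_{14}$-part is likewise absorbed by the gauge term when $V(\Omega)$ is chosen along the lines of the DeTurck vector field used in \cite{wewi10}. Crucially, the positivity $\nu_i > 0$ guarantees that each irreducible component $\Lambda^p_q$ enters $\sigma(D_\Omega Q_\nu)$ with a strictly positive weight, so no direction of ellipticity is lost in the linear combination.

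The hard part is precisely this matching between the kernel of the weakly parabolic symbol and the symbol of the $\Diff(M)_+$-orbit: one must verify, representation by representation, that the gauge term supplies exactly the missing positive definiteness in each $\Gt$-irreducible summand, and that this can be done uniformly in $\Omega$ on a neighbourhood of $\Omega_0$ in $\Omega^3_+(M)$. Once strict parabolicity of the modified flow is established, the rest of the argument is a routine adaptation of the $\nu = (7,84,1,1)$ case treated in \cite{wewi10}, and no new analytic input is required.
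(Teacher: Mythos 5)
Your overall strategy---DeTurck gauge-fixing, strict parabolicity of the modified flow, standard parabolic theory, pulling back by the flow of the gauge vector field, and uniqueness as in \cite{wewi10}---is exactly the route the paper takes. The problem is that the one step you yourself flag as ``the hard part'', namely verifying that the gauge term restores strict positivity of the symbol for every $\nu\in\R^4_+$, is precisely the substance of the proof, and you neither carry it out nor sketch it correctly. The conditions $[\xi\wedge\dot\Omega]_k=0$ for $k=1,7,27$ are indeed equivalent to $\xi\wedge\dot\Omega=0$, but this kernel is the $15$-dimensional space $\xi\wedge\Lambda^2T_x^*M$, whereas the image of the principal symbol of $X\mapsto\mc{L}_X\Omega$, namely $\{\xi\wedge(X\llcorner\Omega)\}$, is only $7$-dimensional (for $X\neq0$ the $2$-form $X\llcorner\Omega$ has rank $6$, so it is never divisible by $\xi$). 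Hence ``$\xi\wedge\dot\Omega=0$ means exactly that $\dot\Omega$ lies in the image of the orbit symbol'' is false as stated; it is only together with the extra condition $[\xi\llcorner p_\Omega\dot\Omega]_{14}=0$ that the kernel of $\sigma(D_\Omega Q_\nu)$ can be matched with the gauge directions, and declaring that this condition is ``likewise absorbed by the gauge term'' is an assertion, not an argument. Moreover, even a correct identification of the kernel does not by itself give strict parabolicity: the gauge term's symbol is not obviously semidefinite, so one needs a genuine quantitative estimate for the sum, which for $\nu=(7,84,1,1)$ is exactly Lemma 5.7 of \cite{wewi10}.

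The paper closes this gap without any representation-by-representation verification. It uses the same DeTurck field $X(\Omega)=-(\delta_{\bar\Omega}\Omega)\llcorner\bar\Omega$ as in \cite{wewi10}, but scales the gauge term, setting $\widetilde Q_\nu(\Omega)=Q_\nu(\Omega)+\varepsilon(\nu)\mc{L}_{X(\Omega)}\Omega$ with $\varepsilon(\nu)=\min_i\nu_i/36$. Since each $-g_\Omega(\sigma(D_\Omega Q_i)(x,\xi)\dot\Omega,\dot\Omega)\geq0$ by the preceding lemma, one has the termwise comparison $-g_\Omega(\sigma(D_\Omega Q_\nu)\dot\Omega,\dot\Omega)\geq-\varepsilon(\nu)\,g_\Omega(\sigma(D_\Omega Q)\dot\Omega,\dot\Omega)$, and therefore $-g_\Omega(\sigma(D_\Omega\widetilde Q_\nu)\dot\Omega,\dot\Omega)\geq-\varepsilon(\nu)\,g_\Omega(\sigma(D_\Omega\widetilde Q)\dot\Omega,\dot\Omega)\geq\varepsilon(\nu)|\dot\Omega|_\Omega^2$ by Lemma 5.7 of \cite{wewi10}. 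In other words, strict parabolicity for all $\nu\in\R^4_+$ is reduced by a one-line scaling argument to the estimate already proven for the ordinary Dirichlet flow; the remaining steps (short-time existence for the modified flow, pulling back by the flow of $X(\widetilde\Omega_t)$, uniqueness) then proceed as you describe. To repair your proof you must either carry out the pointwise symbol verification you deferred, or replace it by a comparison of this kind.
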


\begin{proof}
We employ DeTurck's trick as in \cite{wewi10}. Given some background $\Gt$-structure $\bar\Omega \in
\Omega^3_+(M)$ (e.g.~the initial condition $\Omega_0$) we consider the vector field
\ben
X(\Omega)=-(\delta_{\bar\Omega}\Omega) \llcorner \bar\Omega.
\ee
For $\varepsilon(\nu)=\min_{i=0,1,2,3}\nu_i/36$ we set $\Lambda(\Omega):=\mathcal L_{X(\Omega)}\Omega$ and
\ben
\widetilde Q_{\nu}(\Omega) := Q_\nu(\Omega) + \varepsilon(\nu) \Lambda(\Omega).
\ee
Then $D_\Omega\widetilde Q_{\nu} = D_\Omega Q_\nu + \varepsilon(\nu) D_\Omega \Lambda$. For $\xi \in T_x^*M$ with $|\xi|_\Omega=1$ we find that
\begin{align*}
-g_\Omega(\sigma(D_\Omega Q_\nu)(x,\xi)\dot \Omega, \dot \Omega) &= - \sum_{i=0}^3 \nu_i g_\Omega(\sigma(D_\Omega Q_i)(x,\xi)\dot \Omega, \dot \Omega)\\
 & \geq - \varepsilon(\nu) g_\Omega(\sigma(D_\Omega Q)(x,\xi)\dot \Omega, \dot \Omega)
\end{align*}
and hence
\begin{align*}
&-g_\Omega(\sigma(D_\Omega \widetilde Q_{\nu})(x,\xi)\dot \Omega, \dot \Omega)\\
\geq &-\varepsilon(\nu) g_\Omega(\sigma(D_\Omega Q)(x,\xi)\dot \Omega, \dot \Omega)  - \varepsilon(\nu) g_\Omega(\sigma (D_\Omega\Lambda)(x,\xi)\dot \Omega, \dot \Omega)\\ 
= &-\varepsilon(\nu) g_\Omega(\sigma(D_\Omega \widetilde Q)(x,\xi)\dot \Omega, \dot \Omega) \geq \varepsilon(\nu) |\dot \Omega|_\Omega^2,
\end{align*}
where the last line follows from Lemma 5.7 in \cite{wewi10}. 

\medskip

This shows that
the flow equation $\partial_t \widetilde\Omega_t = \widetilde Q_{\nu}(\widetilde\Omega_t)$
is strongly parabolic. Standard methods, see for instance \cite{top}, now yield a unique short-time
solution $\widetilde \Omega_t$. A short-time solution $\Omega_t$ for the original flow
equation $\partial_t \Omega_t = Q_\nu(\Omega_t)$ is then obtained by integrating the time-dependent vector field $X(\widetilde\Omega_t)$ and pulling back $\widetilde\Omega_t$ by the
corresponding family of diffeomorphisms, cf.~\cite{wewi10} for details. 

\medskip

The proof of uniqueness given in \cite{wewi10} for the Dirichlet energy flow applies
without change to yield uniqueness of the solution $\Omega_t$ on short time intervals.
\end{proof}

Finally, as in \cite{wewi10} we also get a stability result:

\begin{thm}\label{nustab}
Let $\bar \Omega \in \Omega^3_+(M)$ be torsion-free. Then for any initial condition sufficiently close to $\bar \Omega$ in the $C^\infty$-topology the solution to \eqref{gDF} for $\nu \in \R^4_+$ exists for all times and converges modulo diffeomorphisms to a torsion-free $\Gt$-structure. 

\end{thm}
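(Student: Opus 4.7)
The plan is to mimic the stability argument of~\cite{wewi10} for the ordinary Dirichlet energy flow, the key point being that all structural ingredients of that proof carry over to the weighted case $\nu\in\R^4_+$. In particular, the DeTurck-gauged linearisation at a torsion-free $\bar\Omega$ will again be a self-adjoint, non-positive, elliptic operator whose kernel is independent of $\nu$, so the abstract convergence argument applies once this is verified.

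First, I would invoke the DeTurck modification already used in Theorem~\ref{nuste}: replace~\eqref{gDF} by $\partial_t\widetilde\Omega_t=\widetilde Q_\nu(\widetilde\Omega_t)$ with $\widetilde Q_\nu=Q_\nu+\varepsilon(\nu)\Lambda$, where $\Lambda(\Omega)=\mc{L}_{X(\Omega)}\Omega$ is generated by the vector field $X(\Omega)=-(\delta_{\bar\Omega}\Omega)\llcorner\bar\Omega$ associated with the torsion-free background $\bar\Omega$. This modified flow is strongly parabolic in a $C^\infty$-neighbourhood of $\bar\Omega$, and its solutions are related to those of~\eqref{gDF} by pull-back along the time-dependent diffeomorphisms integrating $X(\widetilde\Omega_t)$.

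Next, I would linearise at $\bar\Omega$ and study $L_\nu:=D_{\bar\Omega}\widetilde Q_\nu=\sum_{i=0}^3\nu_i D_{\bar\Omega}Q_i+\varepsilon(\nu)D_{\bar\Omega}\Lambda$. Since $\bar\Omega$ is torsion-free, all $\tau_i(\bar\Omega)$ vanish, so each $Q_i(\bar\Omega)=0$ and $-D_{\bar\Omega}Q_i=\mr{Hess}_{\bar\Omega}\mc D_i$ is the Hessian at a zero of a non-negative functional, hence formally self-adjoint and non-negative for the $L^2_{\bar\Omega}$-inner product. Combined with the self-adjoint DeTurck correction this makes $-L_\nu$ an elliptic, self-adjoint, non-negative operator with purely discrete spectrum accumulating only at $+\infty$. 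Its kernel coincides with $\ker L$ from the ordinary Dirichlet case: since all $\nu_i>0$ and each $-D_{\bar\Omega}Q_i$ is non-negative, pairing $L_\nu\dot\Omega=0$ with $\dot\Omega$ forces $\langle-D_{\bar\Omega}Q_i\dot\Omega,\dot\Omega\rangle=0$ for each $i$, so $\ker L_\nu=\bigcap_i\ker D_{\bar\Omega}Q_i=\ker L$. By the slice construction and the Joyce-type analysis carried out in~\cite{wewi10}, this kernel is canonically identified with the tangent space at $\bar\Omega$ of a smooth, finite-dimensional moduli space $\mc M$ of torsion-free $\Gt$-structures in the DeTurck slice, and $\mc M$ integrates it.

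Finally, I would apply a principle of linearised stability in the presence of a smooth critical manifold (a {\L}ojasiewicz--Simon or Simonett-type theorem, as in \cite{wewi10}): self-adjointness and non-negativity of $-L_\nu$, together with the fact that $\ker L_\nu$ is tangent to the smooth finite-dimensional critical set $\mc M$ and that $-L_\nu$ has a uniform spectral gap on $(\ker L_\nu)^\perp$, imply that for any $\Omega_0$ sufficiently close to $\bar\Omega$ in $C^\infty$ the modified flow $\widetilde\Omega_t$ exists for all $t\geq 0$ and converges exponentially in every $C^k$-norm to some $\Omega_\infty\in\mc M$. Because $X(\widetilde\Omega_t)$ then decays exponentially, the generated diffeomorphisms $\varphi_t$ converge to some $\varphi_\infty\in\Diff(M)$, and the original flow $\Omega_t=\varphi_t^*\widetilde\Omega_t$ converges modulo diffeomorphisms to the torsion-free form $\varphi_\infty^*\Omega_\infty$. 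The main obstacle is establishing the two spectral facts just used—namely $\ker L_\nu=\ker L$ and a uniform positive spectral gap of $-L_\nu$ off this kernel—uniformly in $\nu\in\R^4_+$; both follow from the individual non-negativity of each $-D_{\bar\Omega}Q_i$ together with $\nu_i>0$, so that the weighting merely rescales eigenvalues without perturbing the kernel, and the stability argument of~\cite{wewi10} transfers essentially unchanged.
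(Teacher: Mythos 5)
Your proposal is correct and takes essentially the same route as the paper: DeTurck modification with the weight $\varepsilon(\nu)$, non-positivity (and G\r arding inequality) for the linearised operator $L_\nu$ at the torsion-free $\bar\Omega$ with kernel agreeing with that of the ordinary Dirichlet case, and then the stability machinery of~\cite{wewi10}. The only cosmetic difference is that the paper obtains these spectral facts from the explicit formulas for $D_{\bar\Omega}Q_i$ and $D_{\bar\Omega}\Lambda$ at a torsion-free point together with the comparison $\langle -L_\nu\dot\Omega,\dot\Omega\rangle_{L^2_{\bar\Omega}}\geq\varepsilon(\nu)\langle -L\dot\Omega,\dot\Omega\rangle_{L^2_{\bar\Omega}}$, whereas you argue abstractly via the non-negativity of each Hessian $-D_{\bar\Omega}Q_i$ (and of the DeTurck term), which yields the same conclusions.
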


\begin{proof}
Let $\Omega \in  \Omega^3_+(M)$ be torsion-free,
i.e.~$d\Omega=\delta_\Omega\Omega=0$. Then
\begin{alignat*}{2}
(D_\Omega Q_0)\dot\Omega &=-\frac{1}{7} \delta_\Omega[d\dot\Omega]_1,\quad &
(D_\Omega Q_1)\dot \Omega &= - \frac{1}{36}  \delta_\Omega[d\dot\Omega]_7 \\
(D_\Omega Q_2) \dot \Omega &=  - p_\Omega(d [\delta_\Omega p_\Omega \dot
\Omega ]_{14}),\quad & (D_\Omega Q_3) \dot \Omega &=  -\delta_\Omega [ d \dot \Omega ]_{27}
\end{alignat*}
and
\ben
(D_\Omega \Lambda)(\dot \Omega) = -3 d[\delta_\Omega\dot\Omega]_7.
\ee
We set $L_{\nu}:=D_\Omega \widetilde
Q_{\nu}$ and $L:=D_\Omega
\widetilde Q$ as in \cite{wewi10}. Then we get
\ben
L_{\nu} = -\nu_0 \frac{1}{7}
\delta_\Omega[d\dot\Omega]_1 - \nu_1 \frac{1}{36}
\delta_\Omega[d\dot\Omega]_7 - \nu_2 p_\Omega(d [\delta_\Omega p_\Omega \dot
\Omega ]_{14})-\nu_3\delta_\Omega [ d \dot \Omega ]_{27} -3\varepsilon(\nu)d[\delta_\Omega\dot\Omega]_7
\ee
and hence
\ben
\langle - L_{\nu} \dot \Omega, \dot \Omega \rangle_{L^2_\Omega} \geq
\varepsilon(\nu) \langle -L \dot \Omega, \dot \Omega \rangle_{L^2_\Omega}
\quad \forall \dot \Omega \in \Omega^3(M)
\ee
with $\varepsilon(\nu)=\min_{i=0,1,2,3}\nu_i/36$ as above. In particular, $L_{\nu}$ is non-positive and the G\r arding inequality holds. The proof then proceeds along the same lines as the one given in~\cite{wewi10} for the Dirichlet energy flow.
\end{proof}
%
%
%
\section{$\Gt$-solitons}
%
\subsection{Symmetries}
%
Recall that one has a natural $\Diff(M)_+$-action on $\Omega_+^3(M)$ given by pullback and that $\mathcal D$ is $\Diff(M)_+$-invariant, i.e.\ $\mathcal D(\varphi^* \Omega) = \mathcal D(\Omega)$ for all $\varphi \in \Diff(M)_+$. This implies that
\be\label{equiv}
\varphi^* Q(\Omega) = Q(\varphi^*\Omega).
\ee
Further, any symmetry of the initial condition $\Omega_0$ is preserved by the Dirichlet energy flow:

\begin{lem}\label{invariance}
Let $(\Omega_t)_{t \in [0,T)}$ be a solution to \eqref{DF} with initial condition $\Omega_0$. If $\varphi^* \Omega_0 =\Omega_0$ for some $\varphi \in \Diff(M)_+$, then $\varphi^*\Omega_t = \Omega_t$ for all $t \in [0,T)$.
\end{lem}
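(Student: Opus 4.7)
The proof is a standard diffeomorphism-equivariance argument combined with uniqueness, so I would organise it as follows.

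Let me define $\wt\Omega_t := \varphi^*\Omega_t$ on $[0,T)$ and show that $\wt\Omega_t$ satisfies the same initial value problem as $\Omega_t$. First, since $\varphi$ is a diffeomorphism and pullback commutes with $\partial_t$,
\[
\tfrac{\partial}{\partial t}\wt\Omega_t = \varphi^*\bigl(\tfrac{\partial}{\partial t}\Omega_t\bigr) = \varphi^* Q(\Omega_t).
\]
Applying the equivariance identity \eqref{equiv} gives $\varphi^* Q(\Omega_t) = Q(\varphi^*\Omega_t) = Q(\wt\Omega_t)$, so $\wt\Omega_t$ solves \eqref{DF}. The initial condition is satisfied by hypothesis: $\wt\Omega_0 = \varphi^*\Omega_0 = \Omega_0$.

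Now I invoke uniqueness. Theorem~\ref{stex} provides a unique short-time solution for any initial positive form, so there exists $\varepsilon>0$ such that $\wt\Omega_t = \Omega_t$ for $t\in[0,\varepsilon)$. To upgrade this to the full interval $[0,T)$, I would run a standard connectedness argument. Consider
\[
I := \bigl\{ t \in [0,T) \;\big|\; \wt\Omega_s = \Omega_s \text{ for all } s \in [0,t] \bigr\}.
\]
The set $I$ is nonempty (it contains $[0,\varepsilon)$) and closed in $[0,T)$ by continuity of $s\mapsto\wt\Omega_s-\Omega_s$. To see it is also open, fix $t_0 \in I$; then $\wt\Omega_{t_0} = \Omega_{t_0}$, so both $s\mapsto\Omega_{t_0+s}$ and $s\mapsto\wt\Omega_{t_0+s}$ are solutions of \eqref{DF} with the same initial datum at $s=0$. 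Short-time uniqueness from Theorem~\ref{stex} forces them to agree on some $[0,\delta)$, so $[0,t_0+\delta)\subset I$. Hence $I = [0,T)$, which is the claim.

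The only delicate point is the application of uniqueness on the full interval rather than just near $t=0$. This is routine once one notes that any time $t_0$ can be used as a new starting time, so the short-time uniqueness in Theorem~\ref{stex} suffices and no stronger global uniqueness statement is required. No further properties of $Q$ beyond its diffeomorphism equivariance \eqref{equiv} are used.
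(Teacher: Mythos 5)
Your argument is correct and follows essentially the same route as the paper: use the equivariance identity \eqref{equiv} to show that $\varphi^*\Omega_t$ is again a solution of \eqref{DF} with initial condition $\varphi^*\Omega_0=\Omega_0$, and then conclude by uniqueness. The only difference is that you spell out the open--closed connectedness argument upgrading short-time uniqueness from Theorem~\ref{stex} to agreement on all of $[0,T)$, a step the paper leaves implicit when it simply invokes ``uniqueness of the Dirichlet energy flow''.
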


\begin{proof}
Using equation \eqref{equiv} one gets that $(\varphi^*\Omega_t)_{t \in [0,T)}$ is a solution to \eqref{DF} with initial condition $\varphi^*\Omega_0$. Since $\varphi^*\Omega_0 = \Omega_0$, uniqueness of the Dirichlet energy flow implies that $\varphi^*\Omega_t = \Omega_t$ for all $t \in [0,T)$.
\end{proof}

Secondly, one has a natural $\R_+$-action on $\Omega_+^3(M)$ given by scaling with respect to which $\mathcal D$ is positively homogeneous, i.e.
\begin{equation}\label{D-homog}
\mathcal D(\lambda \Omega) = \lambda^{\frac{5}{3}}\mathcal D(\Omega).
\end{equation}
for all $\lambda \in \R_+$.

\begin{lem}\label{Q-homog}
One has $Q(\lambda \Omega) = \lambda^{\frac{1}{3}} Q(\Omega)$ for all $\lambda \in \R_+$.
\end{lem}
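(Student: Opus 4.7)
The plan is to differentiate the homogeneity identity \eqref{D-homog} in the direction of an arbitrary $\dot\Omega\in\Omega^3(M)$ and combine the result with the scaling behaviour of the $L^2$-inner product under $\Omega\mapsto\lambda\Omega$. Applying $\partial_s|_{s=0}$ to $\mc{D}(\lambda(\Omega+s\dot\Omega))=\lambda^{5/3}\mc{D}(\Omega+s\dot\Omega)$ and using the chain rule gives
\begin{equation*}
\lambda\,D_{\lambda\Omega}\mc{D}(\dot\Omega) = \lambda^{5/3}\,D_\Omega\mc{D}(\dot\Omega)
\end{equation*}
for every $\dot\Omega$. Substituting the defining property $D_\Omega\mc{D}(\dot\Omega)=-\langle Q(\Omega),\dot\Omega\rangle_\Omega$ of the negative $L^2$-gradient turns this into
\begin{equation*}
\lambda\,\langle Q(\lambda\Omega),\dot\Omega\rangle_{\lambda\Omega} = \lambda^{5/3}\,\langle Q(\Omega),\dot\Omega\rangle_\Omega.
\end{equation*}

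Next I would pin down how the $L^2$-inner product on $3$-forms rescales. From the defining identity $6\,g_\Omega(X,Y)\vol_\Omega = (X\llcorner\Omega)\wedge(Y\llcorner\Omega)\wedge\Omega$ the right-hand side is manifestly homogeneous of degree three in $\Omega$; writing $g_{\lambda\Omega}=\lambda^{a}g_\Omega$ forces $\vol_{\lambda\Omega}=\lambda^{7a/2}\vol_\Omega$, and matching the total power of $\lambda$ to $3$ yields $a=2/3$. Consequently the pointwise inner product on $\Lambda^3$, which involves three copies of the inverse metric, scales by $\lambda^{-2}$, while the volume form scales by $\lambda^{7/3}$. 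Multiplying these contributions gives
\begin{equation*}
\langle\,\cdot\,,\,\cdot\,\rangle_{\lambda\Omega} = \lambda^{1/3}\,\langle\,\cdot\,,\,\cdot\,\rangle_\Omega
\end{equation*}
on $3$-forms.

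Substituting this back into the displayed identity yields $\lambda^{4/3}\langle Q(\lambda\Omega),\dot\Omega\rangle_\Omega = \lambda^{5/3}\langle Q(\Omega),\dot\Omega\rangle_\Omega$ for all $\dot\Omega$, and non-degeneracy of $\langle\,\cdot\,,\,\cdot\,\rangle_\Omega$ then gives $Q(\lambda\Omega)=\lambda^{1/3}Q(\Omega)$. The only step requiring genuine care is fixing the conformal exponents of $g_\Omega$ and $\vol_\Omega$; once those are nailed down, the result is a formal consequence of an Euler-type identity for the positively homogeneous functional $\mc{D}$, and no detailed analysis of the explicit formula \eqref{qop} for $Q$ is required.
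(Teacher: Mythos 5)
Your proposal is correct and follows essentially the same route as the paper: differentiate the homogeneity relation $\mc D(\lambda\Omega)=\lambda^{5/3}\mc D(\Omega)$ to get $D_{\lambda\Omega}\mc D(\dot\Omega)=\lambda^{2/3}D_\Omega\mc D(\dot\Omega)$, then compare the two expressions for this derivative via the gradient, using $g_{\lambda\Omega}=\lambda^{-2}g_\Omega$ on $3$-forms and $\vol_{\lambda\Omega}=\lambda^{7/3}\vol_\Omega$ (which you derive rather than quote, but they coincide with the paper's scaling facts). Nondegeneracy of the $L^2$-pairing then yields $Q(\lambda\Omega)=\lambda^{1/3}Q(\Omega)$ exactly as in the paper.
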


\begin{proof}
Using equation \eqref{D-homog} we calculate
\begin{align*}
D_{\lambda \Omega} \mathcal D (\dot \Omega) &= \frac{d}{dt}\Bigr|_{t=0}\mathcal D(\lambda \Omega + t \dot \Omega)\\
&= \lambda^{\frac{5}{3}} \frac{d}{dt}\Bigr|_{t=0}\mathcal D(\Omega + t \lambda^{-1} \dot \Omega) \\
&= \lambda^{\frac{5}{3}} D_\Omega \mathcal D(\lambda^{-1} \dot \Omega) = \lambda^{\frac{2}{3}} D_\Omega \mathcal D(\dot \Omega). 
\end{align*}
Hence 
\ben
D_{\lambda \Omega} \mathcal D(\dot \Omega) = \lambda^{\frac{2}{3}} D_{\Omega}\mathcal D(\dot \Omega) = \lambda^{\frac{2}{3}} \int_M g_\Omega(\grad \mathcal D(\Omega), \dot\Omega) \vol_\Omega
\ee
and on the other hand
\ben
D_{\lambda \Omega} \mathcal D(\dot \Omega) = \int_M g_{\lambda\Omega}(\grad \mathcal D(\lambda\Omega), \dot\Omega) \vol_{\lambda\Omega} = \lambda^{\frac{1}{3}} \int_M g_\Omega(\grad \mathcal D(\lambda\Omega), \dot\Omega) \vol_\Omega.
\ee
Here we have used the fact that $\vol_{\lambda\Omega} = \lambda^{\frac{7}{3}} \vol_\Omega$ and $g_{\lambda \Omega} = \lambda^{-2} g_\Omega$ on 3-forms. Comparing these two expressions we get the result.
\end{proof}

\begin{rmk}
As a consequence of the preceding lemma, if $\Omega_t$ is a solution to \eqref{DF} on $[0,T)$ and $\lambda>0$, then the space-time rescaling
$
\Omega^\lambda_t : = \lambda \Omega_{\lambda^{-2/3} t}
$
is again a solution to \eqref{DF}, defined on $[0,\lambda^{2/3}T)$.
\end{rmk}
%
\subsection{A Bianchi-type identity}
For some fixed background $\Gt$-structure $\Omega$, consider the operator
\ben
\lambda_{\Omega}^*: \mathcal X(M) \rightarrow \Omega^3(M)\,,\, X \mapsto \mathcal{L}_X\Omega
\ee
and its formal adjoint with respect to $L^2_{g_{\Omega}}$, namely
\ben
\lambda_{\Omega}: \Omega^3(M) \rightarrow \mathcal X(M)\,,\, \dot \Omega \mapsto -X_{\Omega}(\dot \Omega) - \dot \Omega \llcorner d \Omega,
\ee
where $X_{\Omega}(\dot\Omega)=-\delta_{\Omega}\dot\Omega\llcorner\Omega$. As usual we identify 1-forms and vector fields using $g_{\Omega}$. 
Recall that we have an $L^2$-orthogonal decomposition
\be\label{tsdecomp}
\Omega^3(M)=\ker\lambda_{\Omega}\oplus\im\lambda_{\Omega}^*,
\ee
where the second summand is tangent to the $\Diff(M)_+$-orbit through $\Omega$, see Proposition 5.6 and Lemma 7.3 in \cite{wewi10}.

\begin{lem}\label{Bianchi}
For all $\Omega\in\Omega^3_+(M)$ we have $\lambda_\Omega (Q(\Omega)) = 0$ and $\lambda_\Omega\Omega = 0$.
\end{lem}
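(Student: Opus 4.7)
Both assertions state that a specific $3$-form is $L^2_{g_\Omega}$-orthogonal to the tangent space of the $\Diff(M)_+$-orbit through $\Omega$, which by \eqref{tsdecomp} equals $\im\lambda_\Omega^*$. Since $\lambda_\Omega$ is the formal adjoint of $\lambda_\Omega^*$, it suffices in each case to verify that $\langle \alpha, \mathcal{L}_X \Omega\rangle_\Omega = 0$ for every vector field $X \in \mathcal X(M)$, with $\alpha = Q(\Omega)$ and $\alpha = \Omega$ respectively.

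The identity $\lambda_\Omega Q(\Omega) = 0$ is then immediate from the $\Diff(M)_+$-invariance of $\mathcal D$ (recorded in~\eqref{equiv}): if $\varphi_t$ denotes the flow of $X$, then $\mathcal D(\varphi_t^* \Omega) \equiv \mathcal D(\Omega)$, so differentiating at $t = 0$ gives $D_\Omega \mathcal D(\mathcal L_X \Omega) = 0$. Since by definition of $Q$ one has $D_\Omega \mathcal D(\dot\Omega) = -\langle Q(\Omega), \dot\Omega\rangle_\Omega$, this is precisely the orthogonality required.

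For $\lambda_\Omega \Omega = 0$ I would decompose the infinitesimal action into deformation forms, $\mathcal L_X \Omega = 3 f_X \Omega + \star_\Omega(\beta_X \wedge \Omega) + \gamma_X$ with $\gamma_X \in \Omega^3_{27,\Omega}(M)$. Pointwise $\Omega$ spans the trivial $\Gt$-module $\Lambda^3_1$, which is $g_\Omega$-orthogonal to $\Lambda^3_7 \oplus \Lambda^3_{27}$, and $|\Omega|^2_\Omega = 7$ by Proposition~\ref{d_delta_norm}, so $g_\Omega(\Omega, \mathcal L_X \Omega) = 21 f_X$. On the other hand the volume form evolves by $\mathcal L_X \vol_\Omega = 7 f_X \vol_\Omega$, while Cartan's formula gives $\mathcal L_X \vol_\Omega = d(X\llcorner\vol_\Omega)$, so Stokes on the closed manifold $M$ forces $\int_M f_X \vol_\Omega = 0$. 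Consequently $\langle \Omega, \mathcal L_X \Omega\rangle_\Omega = 21 \int_M f_X \vol_\Omega = 0$.

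There is no genuine obstacle: the first identity is a standard variational consequence of the diffeomorphism symmetry of $\mathcal D$, and the second amounts to the observation that the deformation of the total volume along a diffeomorphism flow is purely exact. The only bookkeeping to watch is the pointwise $\Gt$-isotypical decomposition of $3$-forms together with the normalisation constant $|\Omega|^2_\Omega = 7$ coming from Proposition~\ref{d_delta_norm}.
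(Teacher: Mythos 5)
Your proof is correct and takes essentially the same route as the paper: the first identity is exactly the Kazdan-style argument from the $\Diff(M)_+$-invariance of $\mathcal D$, namely $D_\Omega\mathcal D(\mathcal L_X\Omega)=0$ translated into orthogonality of $Q(\Omega)$ to $\im\lambda_\Omega^*$. For the second identity your computation with deformation forms and Stokes ($\mathcal L_X\vol_\Omega=7f_X\vol_\Omega$ being exact, and $g_\Omega(\Omega,\mathcal L_X\Omega)=21f_X$) is just an explicit unpacking of the paper's shorter observation that $\mathcal H$ is diffeomorphism-invariant with $\grad\mathcal H(\Omega)=\tfrac{1}{3}\Omega$ by \eqref{phider}, so the two arguments coincide in substance.
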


\begin{proof}
The proof proceeds along the same lines as Kazdan's derivation of the usual Bianchi identity in \cite{ka81}: If $\mathcal{F}: \Omega^3_+(M) \rightarrow \R$ is a $\Diff(M)_+$-invariant functional, then  $\lambda_\Omega (\grad \mathcal F(\Omega))=0$, since the level-set $\mathcal F^{-1}(\mathcal F(\Omega))$ contains the $\Diff(M)_+$-orbit through $\Omega$. Now by definition, $Q(\Omega)=-\grad \mathcal D(\Omega)$, which yields $\lambda_\Omega(Q(\Omega))=0$. Secondly, from equation~\eqref{phider} it follows that 
\ben
\grad\mc H(\Omega) = \tfrac{1}{3}\Omega
\ee
which gives $\lambda_\Omega\Omega=0$.
\end{proof}

\begin{rmk} 
The equation $\lambda_\Omega \Omega = 0$ is equivalent to $\tau_1 = \tilde\tau_1$, where in the the definition of the torsion forms one has 
\ben
d \Omega = \tau_0 \star_\Omega \Omega + 3 \tau_1 \wedge \Omega + \star_\Omega \tau_3
\ee
and
\ben
d \star_\Omega\Omega = 4 \tilde\tau_1 \wedge \star_\Omega\Omega + \tau_2 \wedge \Omega
\ee
for $\tilde \tau_1$ a priori different from $\tau_1$. Indeed, $\lambda_\Omega \Omega = (\delta_\Omega\Omega) \llcorner \Omega - \Omega \llcorner d\Omega=0$ is equivalent to
\be\label{Bianchieq}
([\delta_\Omega\Omega]_7) \llcorner \Omega = \Omega \llcorner ([d\Omega]_7).
\ee
Substituting $[\delta_\Omega\Omega]_7 = -4 \star_\Omega \tilde\tau_1 \wedge \star_\Omega\Omega$ and $[d\Omega]_7 = 3 \tau_1 \wedge \Omega$ we obtain that equation \eqref{Bianchieq} is equivalent to
\be\label{Bianchieq2}
-4 \star_\Omega (\tilde\tau_1 \wedge \star_\Omega \Omega) \llcorner \Omega = 3 \Omega \llcorner (\tau_1 \wedge \Omega).
\ee
A routine calculation establishes for $\xi \in \Omega^1(M)$ the identities $\Omega \llcorner (\xi \wedge \Omega) = -4 \xi$ and $\star_\Omega(\xi \wedge \star_\Omega\Omega) \llcorner \Omega = 3 \xi$. Hence the left-hand side of equation \eqref{Bianchieq2} equals $-12 \tilde\tau_1$, whereas the right-hand side equals $-12 \tau_1$.
\end{rmk}

\begin{cor}\label{einstein}
If $\Omega \in \Omega^3_+(M)$ satisfies $Q(\Omega) = f \cdot \Omega$ for $f \in C^\infty(M)$, then $f$ is constant, i.e.\ $Q(\Omega) = \lambda \Omega$ for $\lambda \in \R$.
\end{cor}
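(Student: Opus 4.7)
The plan is to feed the assumed identity $Q(\Omega)=f\Omega$ into the Bianchi-type identity $\lambda_\Omega(Q(\Omega))=0$ of Lemma~\ref{Bianchi} and read off an equation that forces $df=0$. The second identity of Lemma~\ref{Bianchi}, namely $\lambda_\Omega\Omega=0$, will let me kill the $f$-linear terms, leaving a single algebraic expression in $df$.

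Concretely, I would first unpack
\ben
\lambda_\Omega(\dot\Omega)=\delta_\Omega\dot\Omega\llcorner\Omega-\dot\Omega\llcorner d\Omega
\ee
with $\dot\Omega=f\Omega$, using the standard Leibniz rule $\delta_\Omega(f\Omega)=f\delta_\Omega\Omega-df\llcorner\Omega$ (valid because $\delta_\Omega=-\star_\Omega d\star_\Omega$ on $\Omega^3(M)$). This gives
\begin{align*}
\lambda_\Omega(f\Omega)
&=(f\delta_\Omega\Omega-df\llcorner\Omega)\llcorner\Omega-f\,\Omega\llcorner d\Omega\\
&=f\,\lambda_\Omega\Omega-(df\llcorner\Omega)\llcorner\Omega\\
&=-(df\llcorner\Omega)\llcorner\Omega,
\end{align*}
where at the last step I invoke $\lambda_\Omega\Omega=0$. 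It remains to identify the purely algebraic tensor $(\xi\llcorner\Omega)\llcorner\Omega$ with a multiple of $\xi$ for arbitrary $\xi\in\Omega^1(M)$.

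For this I would use the Hodge-theoretic identity $\xi\llcorner\Omega=\star_\Omega(\xi\wedge\star_\Omega\Omega)$, which in dimension seven on a $3$-form carries no sign, and then apply the $\Gt$-algebraic identity $\star_\Omega(\xi\wedge\star_\Omega\Omega)\llcorner\Omega=3\xi$ already noted in the remark following Lemma~\ref{Bianchi}. This yields $\lambda_\Omega(f\Omega)=-3\,df$. Combined with $\lambda_\Omega(Q(\Omega))=0$, we obtain $df=0$, so $f$ is locally constant; on the (implicitly connected) compact manifold $M$ this means $f\equiv\lambda\in\R$, as claimed.

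The only delicate point is the $\Gt$-algebraic identity $(\xi\llcorner\Omega)\llcorner\Omega=3\xi$, but this is just $\Gt$-equivariance applied to a single orbit representative on the unit sphere in $\R^7$, and it has already been established in the remark after Lemma~\ref{Bianchi} in the equivalent Hodge-dual formulation. No analytic input beyond the two Bianchi identities is needed.
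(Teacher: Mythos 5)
Your proof is correct and follows the paper's own argument: apply $\lambda_\Omega$ to $Q(\Omega)=f\Omega$, use both identities of Lemma~\ref{Bianchi} to reduce to $(df\llcorner\Omega)\llcorner\Omega=0$, and conclude $df=0$ from the algebraic fact that $(\xi\llcorner\Omega)\llcorner\Omega$ is a nonzero multiple of $\xi$. Your only addition is deriving that last identity from the Hodge-dual form $\star_\Omega(\xi\wedge\star_\Omega\Omega)\llcorner\Omega=3\xi$ stated in the remark after Lemma~\ref{Bianchi}, which the paper simply quotes directly as $(\xi\llcorner\Omega)\llcorner\Omega=3\xi$.
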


\begin{proof}
Applying $\lambda_\Omega$ to the equation $Q(\Omega) = f \cdot \Omega$ yields the equation $\lambda_\Omega(f\Omega) = 0$ using Lemma \ref{Bianchi}. On the other hand
\begin{align*}
\lambda_\Omega (f \Omega) &= -\delta_\Omega(f \Omega) \llcorner \Omega - f \Omega \llcorner d\Omega\\
&= (df \llcorner \Omega - f \delta_\Omega \Omega) \llcorner \Omega - f \Omega \llcorner d\Omega\\
& = (df \llcorner \Omega) \llcorner \Omega - f \lambda_\Omega\Omega = (df \llcorner \Omega) \llcorner \Omega,
\end{align*}
where we have again used Lemma~\ref{Bianchi} in the last line. Now since $ (\xi \llcorner \Omega) \llcorner \Omega = 3 \xi$ for all $\xi \in \Omega^1(M)$ we conclude that $df = 0$, i.e.~$f$ is constant.
\end{proof}

Next we consider the operator $\widetilde Q_{\bar\Omega}(\Omega)=Q(\Omega)+\lambda^*_{\Omega}(X_{\bar\Omega}(\Omega))$, $\Omega,\bar\Omega\in\Omega^3_+(M)$, defined in~\cite{wewi10}.

\begin{cor}\label{slice}
If $\Omega \in \Omega^3_+(M)$ satisfies $\widetilde Q_{\bar\Omega}(\Omega) = 0$, then $Q(\Omega)=0$, i.e.\ $\Omega$ is torsion-free.
\end{cor}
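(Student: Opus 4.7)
The plan is to combine the Bianchi-type identity of Lemma~\ref{Bianchi} with the $L^2$-orthogonal splitting~\eqref{tsdecomp}. Unpacking the definition of $\widetilde Q_{\bar\Omega}$, the hypothesis $\widetilde Q_{\bar\Omega}(\Omega)=0$ reads
\[
Q(\Omega) = -\lambda^*_\Omega\bigl(X_{\bar\Omega}(\Omega)\bigr),
\]
so $Q(\Omega)\in\im\lambda^*_\Omega$ by construction.

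On the other hand, Lemma~\ref{Bianchi} tells us that $\lambda_\Omega(Q(\Omega))=0$, i.e.\ $Q(\Omega)\in\ker\lambda_\Omega$. Now invoke the $L^2_{g_\Omega}$-orthogonal decomposition
\[
\Omega^3(M) = \ker\lambda_\Omega \oplus \im\lambda^*_\Omega
\]
from~\eqref{tsdecomp}: since $Q(\Omega)$ lies in both summands, it lies in their intersection, which is trivial. Hence $Q(\Omega)=0$, and by definition of $Q$ this means $\Omega$ is a critical point of $\mathcal D$; by the main result of~\cite{wewi10} (or by inspection of the torsion forms) this is equivalent to $\Omega$ being torsion-free.

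There is no real obstacle here beyond carefully identifying each summand. The only point to double-check is that we are entitled to pair the Bianchi identity computed with respect to $g_\Omega$ against the $L^2_{g_\Omega}$-decomposition, i.e.\ that the background $\bar\Omega$ used to define $X_{\bar\Omega}$ does not intrude on the orthogonality argument. It does not, because $\lambda^*_\Omega$ itself is defined using $g_\Omega$, and the image of $\lambda^*_\Omega$ coincides with the tangent space to the $\Diff(M)_+$-orbit through $\Omega$, which is precisely the orthogonal complement of $\ker\lambda_\Omega$. So the short argument above is complete.
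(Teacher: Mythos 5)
Your argument is correct and is essentially the paper's proof in a slightly repackaged form: the paper applies $\lambda_\Omega$ to $\widetilde Q_{\bar\Omega}(\Omega)=0$ and uses Lemma~\ref{Bianchi} to get $\lambda_\Omega\lambda_\Omega^*(X_{\bar\Omega}(\Omega))=0$, hence $\lambda_\Omega^*(X_{\bar\Omega}(\Omega))=0$ and $Q(\Omega)=0$, which is exactly the orthogonality of $\ker\lambda_\Omega$ and $\im\lambda_\Omega^*$ that you invoke via~\eqref{tsdecomp}. Your remark that the background $\bar\Omega$ only enters through the vector field $X_{\bar\Omega}(\Omega)$, while $\lambda_\Omega^*$ and the $L^2$-pairing are taken with respect to $g_\Omega$, is the right point to check and is indeed harmless.
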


\begin{proof}
Applying $\lambda_\Omega$ to the equation
\be\label{tildeQ}
\widetilde Q_{\bar\Omega}(\Omega)=Q(\Omega) + \lambda_\Omega^*(X_{\bar \Omega}(\Omega)) = 0
\ee
yields the equation $\lambda_\Omega\lambda_\Omega^*(X_{\bar \Omega}(\Omega)) = 0$ using Lemma \ref{Bianchi}. Hence $\lambda_\Omega^*(X_{\bar \Omega}(\Omega)) = 0$ and therefore $Q(\Omega)=0$.
\end{proof}

\begin{rmk}
Note that if $M$ has finite fundamental group or more generally satisfies $H^1(M,\R)=\{0\}$, then $\widetilde Q_{\bar\Omega}(\Omega)=0$ also implies $X_{\bar\Omega}(\Omega)=0$. Indeed, since $Q(\Omega)=0$, $\Omega$ is torsion-free and $\mc{L}_{X_{\bar\Omega}(\Omega)}\Omega=0$. Hence, $g_\Omega$ is Ricci-flat and $X_{\bar\Omega}(\Omega)$ is Killing. But this implies that $X_{\bar\Omega}(\Omega)$ is parallel and therefore its dual 1-form is harmonic. In general, a parallel Killing vector field has no zeros unless it is identically vanishing. Hence the dual of $X_{\bar\Omega}(\Omega)$ is a closed, nowhere vanishing $1$-form. By Tischler's theorem~\cite{ti70}, $M$ must globally fibre over the circle. Note however that non-trivial parallel Killing vector fields can exist: If $X$ is a Calabi-Yau threefold, then the product $X\times S^1$ admits a natural torsion-free $\Gt$-structure for which the coordinate vector field $\partial_t$ on $S^1$ is a parallel Killing vector field. Conversely, by standard holonomy theory (cf.\ for instance~\cite{be87}), a torsion-free $\Gt$-manifold $(M,\Omega)$ with non-trivial parallel Killing vector field is reducible, that is {\em locally} of the form $X\times S^1$ for $X$ a Calabi-Yau manifold.
\end{rmk}
%
\subsection{The soliton equation}
%
\begin{dfn}
A triple $(\Omega_0,X_0,\mu_0)$ with $\Omega_0 \in \Omega^3_+(M)$, $X_0 \in \mathcal{X}(M)$ a vector field and $\mu_0 \in \R$, which satisfy the equation
\ben
Q(\Omega_0) = \mu_0 \Omega_0 + \mathcal L_{X_0} \Omega_0
\ee
is called a $\Gt$-{\em soliton structure}. A solution to \eqref{DF} of the form
\ben
\Omega_t = \mu(t)\varphi_t^* \Omega_0
\ee
for some function $\mu(t)$ and a family of orientation-preserving diffeomorphisms $\varphi_t$ is called a $\Gt$-{\em soliton solution}. 
\end{dfn}

A particular case of a soliton structure is a $\Gt$-structure $\Omega_0$ satisfying the equation $Q(\Omega_0) = \mu_0 \cdot \Omega_0$ for some constant $\mu_0 \in \R$. The ansatz
\ben
\Omega_t = \mu(t) \Omega_0\,,\, \mu(0)=1
\ee
yields using Lemma \ref{Q-homog}
\begin{align*}
\partial_t \Omega_t &= \mu'(t) \Omega_0\\
Q(\Omega_t) &= \mu(t)^{\frac{1}{3}} \mu_0 \Omega_0
\end{align*}
and hence the ODE
\begin{equation}\label{ODE}
\mu'(t) = \mu_0 \mu(t)^{\frac{1}{3}}\,,\,\mu(0)=1.
\end{equation}
The solution of \eqref{ODE} is given by 
\ben
\mu(t) = \Bigl( \frac{2\mu_0}{3}t + 1 \Bigr)^{\frac{3}{2}}
\ee
on some maximal time interval $[0,T_{max})$. As in the Ricci-flow case one has more generally:

\begin{lem}
Let $(\Omega_0,X_0,\mu_0)$ be a $\Gt$-soliton structure. Then
\begin{equation}\label{solution}
\Omega_t:=\mu(t)\varphi_t^*\Omega_0 
\end{equation}
is a $\Gt$-soliton solution on $[0,T_{max})$ for $\mu(t) = ( \frac{2\mu_0}{3}t + 1 )^{\frac{3}{2}}$ and $\varphi_t$ the flow of the time-dependent vector field $\mu(t)^{-\frac{2}{3}}X_0$. The associated metric flow is given by 
\ben
g_t = \mu(t)^{\frac{2}{3}} \varphi_t^* g_0.
\ee
Conversely, if $\Omega_t = \mu(t)\varphi_t^*\Omega_0$ is a $\Gt$-soliton solution on $[0,T_{max})$, then $(\Omega_0,X_0,\mu_0)$ with $X_0=\frac{d}{dt}\bigr|_{t=0}\varphi_t$ and $\mu_0=\mu(0)$ is a $\Gt$-soliton structure.
\end{lem}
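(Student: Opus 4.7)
The forward direction is a direct verification using the two symmetries of $Q$ established earlier: the scaling homogeneity $Q(\lambda\Omega)=\lambda^{1/3}Q(\Omega)$ from Lemma~\ref{Q-homog}, and the diffeomorphism equivariance $\varphi^*Q(\Omega)=Q(\varphi^*\Omega)$ from~\eqref{equiv}. Writing $Y_t=\mu(t)^{-2/3}X_0$ for the time-dependent generator of $\varphi_t$ and using $\partial_t(\varphi_t^*\alpha)=\varphi_t^*\mc{L}_{Y_t}\alpha$, I compute
\ben
\partial_t\Omega_t = \mu'(t)\,\varphi_t^*\Omega_0 + \mu(t)^{1/3}\varphi_t^*(\mc{L}_{X_0}\Omega_0),
\ee
where the rescaling factor in $Y_t$ has been chosen exactly so that $\mu(t)\cdot\mu(t)^{-2/3}=\mu(t)^{1/3}$. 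On the other hand,
\ben
Q(\Omega_t) = \mu(t)^{1/3}\varphi_t^*Q(\Omega_0) = \mu(t)^{1/3}\varphi_t^*\bigl(\mu_0\Omega_0+\mc{L}_{X_0}\Omega_0\bigr)
\ee
by homogeneity, equivariance, and the soliton equation. The $\varphi_t^*\mc{L}_{X_0}\Omega_0$ terms then cancel automatically, and comparing coefficients of $\varphi_t^*\Omega_0$ reduces~\eqref{DF} to the scalar ODE $\mu'(t)=\mu_0\mu(t)^{1/3}$ with $\mu(0)=1$, which is precisely~\eqref{ODE}, with unique solution $\mu(t)=(\tfrac{2\mu_0}{3}t+1)^{3/2}$ defined on a maximal interval $[0,T_{\max})$.

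For the associated metric flow, the identity $\vol_{\lambda\Omega}=\lambda^{7/3}\vol_\Omega$ together with the general formula $\vol_g=\sqrt{\det g}$ in a chart forces $g_{\lambda\Omega}=\lambda^{2/3}g_\Omega$ on tangent vectors, and combining this with the naturality $g_{\varphi^*\Omega}=\varphi^*g_\Omega$ yields $g_t=\mu(t)^{2/3}\varphi_t^*g_0$ immediately. For the converse, suppose $\Omega_t=\mu(t)\varphi_t^*\Omega_0$ is a solution to~\eqref{DF} with $\mu(0)=1$ and $\varphi_0=\id$. Differentiating at $t=0$ yields $\partial_t\Omega_t|_{t=0}=\mu'(0)\Omega_0+\mc{L}_{X_0}\Omega_0$ with $X_0=\tfrac{d}{dt}|_{t=0}\varphi_t$, and equating this to $Q(\Omega_0)$ gives the soliton equation with scalar constant $\mu'(0)$.

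There is no serious obstacle in this argument; the one non-obvious input is the rescaling factor $\mu(t)^{-2/3}$ in the generator $Y_t$ of $\varphi_t$, which is forced on us by the requirement that the $\mu(t)^{1/3}$ emerging from the homogeneity of $Q$ absorb the $\mu(t)$ factor in front of the Lie-derivative term produced by differentiating the ansatz. Once this compatibility is recognized, the proof is a mechanical matching of coefficients in the two expressions for $\partial_t\Omega_t$ and $Q(\Omega_t)$.
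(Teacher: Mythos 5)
Your proposal is correct and follows essentially the same route as the paper: differentiate the ansatz $\Omega_t=\mu(t)\varphi_t^*\Omega_0$, use the homogeneity $Q(\lambda\Omega)=\lambda^{1/3}Q(\Omega)$ and the equivariance $\varphi^*Q(\Omega)=Q(\varphi^*\Omega)$ to reduce everything to the ODE $\mu'=\mu_0\mu^{1/3}$, and read off the metric flow from the scaling $g_{\lambda\Omega}=\lambda^{2/3}g_\Omega$. Your treatment of the converse (identifying the constant as $\mu'(0)$, with $\mu(0)=1$, $\varphi_0=\id$) is the intended reading of the statement and requires no further comment.
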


\begin{proof}
Differentiating equation \eqref{solution} we get
\begin{align*}
\partial_t \Omega_t &= \varphi_t^* \bigl( \mu(t)^{\frac{1}{3}} \mathcal L_{X_0}(\Omega_0) + \mu'(t) \Omega_0\bigr)\\
Q(\Omega_t) &= \varphi_t^* \mu(t)^{\frac{1}{3}} Q(\Omega_0)
\end{align*}
which yields the claim upon substituting \eqref{ODE}. The evolution of the associated metric $g_t$ immediately follows from its scaling behaviour.   
\end{proof}

\begin{rmk} 
By the preceding lemma, a $\Gt$-soliton structure and a $\Gt$-soliton solution are essentially the same thing. We will therefore simply refer to both the $\Gt$-soliton structure or the corresponding soliton solution as a $\Gt$-{\em soliton}.
\end{rmk}

\begin{dfn}
A $\Gt$-soliton $(\Omega_0,X_0,\mu_0)$ is called {\em expanding}, if $\mu_0>0$; {\em steady}, if $\mu_0=0$; and {\em shrinking}, if $\mu_0<0$. It is called {\em trivial} if $Q(\Omega_0) = \mu_0\Omega_0$.
\end{dfn}

Using this terminology we can state the following:

\begin{prp}\label{no_expanders}
Let $(\Omega_0,X_0,\mu_0)$ be a $\Gt$-soliton. Then the following holds:

{\rm (i)} Any $\Gt$-soliton $(\Omega_0, X_0, \mu_0)$ is trivial, i.e.~already satisfies $Q(\Omega_0) = \mu_0\Omega_0$.  

{\rm (ii)} One has $\mu_0 \leq 0$, i.e.\ there are no expanding $\Gt$-solitons.

{\rm (iii)} If $\Omega_t$ denotes the corresponding soliton solution, then $T_{max} = \infty$ in the steady case and $T_{max}=-\frac{3}{2\mu_0}$ in the shrinking case.
\end{prp}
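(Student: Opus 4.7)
My plan is to deduce (i) directly from the Bianchi-type identity of Lemma~\ref{Bianchi}, then to get (ii) from Euler's homogeneity formula for $\mc D$, and finally to read (iii) off the explicit ODE already solved in the preceding lemma. The main content is concentrated in (i); parts (ii) and (iii) reduce to essentially algebraic checks.

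For (i) I would apply the operator $\lambda_{\Omega_0}$ to both sides of the soliton equation
\[
Q(\Omega_0)=\mu_0\Omega_0+\mc L_{X_0}\Omega_0.
\]
By Lemma~\ref{Bianchi} we have $\lambda_{\Omega_0}Q(\Omega_0)=0$ and $\lambda_{\Omega_0}\Omega_0=0$, and by definition $\mc L_{X_0}\Omega_0=\lambda^*_{\Omega_0}(X_0)$, so the equation collapses to $\lambda_{\Omega_0}\lambda^*_{\Omega_0}(X_0)=0$. Pairing against $X_0$ in $L^2_{\Omega_0}$ yields $\|\lambda^*_{\Omega_0}(X_0)\|^2_{L^2_{\Omega_0}}=0$ by the self-adjointness built into the definition of $\lambda_{\Omega_0}$. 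Hence $\mc L_{X_0}\Omega_0=0$ and the soliton equation reduces to $Q(\Omega_0)=\mu_0\Omega_0$, which is triviality.

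For (ii) I would take the $L^2_{\Omega_0}$-inner product of the reduced equation $Q(\Omega_0)=\mu_0\Omega_0$ with $\Omega_0$. The right-hand side contributes $7\mu_0\mc H(\Omega_0)$, using $|\Omega_0|^2_{\Omega_0}\equiv 7$. The left-hand side equals $-D_{\Omega_0}\mc D(\Omega_0)=-\tfrac{5}{3}\mc D(\Omega_0)$ by the definition of $Q$ as the negative $L^2$-gradient of $\mc D$ together with Euler's identity applied to the positively homogeneous functional $\mc D$ of degree $5/3$ (as already recalled in the proof of Proposition~\ref{H_monotone}). Equating the two sides gives $\mu_0=-\tfrac{5}{21}\mc D(\Omega_0)/\mc H(\Omega_0)\le 0$, with equality precisely when $\Omega_0$ is torsion-free.

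For (iii) the preceding lemma already exhibits the scaling factor $\mu(t)=\bigl(\tfrac{2\mu_0}{3}t+1\bigr)^{3/2}$, and $\Omega_t=\mu(t)\varphi_t^*\Omega_0$ belongs to $\Omega^3_+(M)$ exactly so long as $\mu(t)>0$. In the steady case $\mu(t)\equiv 1$ and, using $\mc L_{X_0}\Omega_0=0$ from (i), the flow $\varphi_t$ of $X_0$ fixes $\Omega_0$, so $\Omega_t\equiv\Omega_0$ exists for all $t\ge 0$. In the shrinking case $\mu(t)>0$ exactly on $[0,-\tfrac{3}{2\mu_0})$ and degenerates at the right endpoint, giving $T_{\max}=-\tfrac{3}{2\mu_0}$. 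The only non-routine point in the whole proof is the initial application of $\lambda_{\Omega_0}$ to the soliton equation in (i); once Lemma~\ref{Bianchi} is invoked, the rest is a single pairing computation and the ODE analysis from the preceding lemma.
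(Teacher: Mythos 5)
Your proof is correct. Parts (i) and (iii) follow the paper's own route exactly: apply $\lambda_{\Omega_0}$ to the soliton equation, invoke Lemma~\ref{Bianchi} to reduce to $\lambda_{\Omega_0}\lambda_{\Omega_0}^*X_0=0$, pair with $X_0$ to kill $\mc L_{X_0}\Omega_0$ (this pairing is exactly what the paper leaves implicit in its ``hence''), and then read $T_{max}$ off the explicit solution of the ODE~\eqref{ODE}. The only genuine divergence is in (ii): the paper argues by contradiction, noting that for $\mu_0>0$ the soliton solution would satisfy $\frac{d}{dt}\mc D(\Omega_t)=\frac{5}{3}\mu_0\mu(t)\mc D(\Omega_0)>0$, contradicting monotonicity of $\mc D$ along its negative gradient flow; you instead pair the reduced equation $Q(\Omega_0)=\mu_0\Omega_0$ against $\Omega_0$ and use Euler's formula for the $5/3$-homogeneous functional $\mc D$ to obtain $7\mu_0\mc H(\Omega_0)=-\tfrac{5}{3}\mc D(\Omega_0)$. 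Your version is purely pointwise-in-time (no flow needed for (ii)) and yields strictly more: the explicit value $\mu_0=-\tfrac{5}{21}\,\mc D(\Omega_0)/\mc H(\Omega_0)$, consistent with the paper's later identity~\eqref{tau0exp} and with Theorem~\ref{weakg2sol}, and the sharp characterisation that $\mu_0=0$ precisely for torsion-free $\Omega_0$, which the paper's contradiction argument does not directly give. One cosmetic remark: what you call ``self-adjointness built into the definition of $\lambda_{\Omega_0}$'' is really the adjointness of $\lambda_{\Omega_0}$ to $\lambda_{\Omega_0}^*$, i.e.\ $\langle\lambda_{\Omega_0}\dot\Omega,X\rangle_{L^2}=\langle\dot\Omega,\lambda_{\Omega_0}^*X\rangle_{L^2}$; the computation you perform is the right one, only the label is off.
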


\begin{proof}
To prove the first assertion we apply $\lambda_{\Omega_0}$ to the equation
\ben
Q(\Omega_0) = \mu_0\Omega_0 + \mathcal L_{X_0}\Omega_0 =  \mu_0\Omega_0 +\lambda_{\Omega_0}^*X_0.
\ee
This gives, using Lemma \ref{Bianchi}, the equation $\lambda_{\Omega_0}\lambda_{\Omega_0}^*X_0=0$, hence $\mathcal L_{X_0}\Omega_0=0$.

Secondly, for $\mu_0>0$ we would have
\ben
\frac{d}{dt} \mathcal{D}(\Omega_t) = \frac{d}{dt} \mathcal{D}(\mu(t)\Omega_0) = \frac{5}{3}\mu_0\mu(t)\mathcal{D}(\Omega_0)>0
\ee
which is incompatible with the monotonicity of $\mathcal D$. The remaining statements follow from the behaviour of the solution of the ODE \eqref{ODE}.
\end{proof}
\begin{rmk}
For a shrinking soliton one clearly has $\lim_{t\rightarrow T_{max}}\mu(t)=0$ and therefore $\lim_{t\rightarrow T_{max}}\mc{H}(\Omega_t)=\lim_{t\rightarrow T_{max}}\mc{D}(\Omega_t)=0$. This follows easily from the scaling behaviour of these functionals.
\end{rmk}
%
\subsection{A constrained variational principle}
%
Next we ask for critical points of $\mc{D}$ under the constraint $\mc{H}(\Omega)=1$. 
Let $\Omega^3_{+,1}(M)$ be the submanifold of $\Omega^3_+(M)$ consisting of positive $3$-forms of total volume $1$. Its tangent space at $\Omega$ is $\ker D_\Omega\mc{H}$. Now by~\eqref{phider}, $\dot{\mc{H}}_\Omega=\langle\dot\Omega,\Omega\rangle/3$ so that $T_\Omega\Omega^3_{+,1}(M)=\Omega^\perp$, the $3$-forms which are perpendicular to $\Omega$ \wrt the natural $L^2$-product. On the other hand, we need $\grad\mc{D}=-Q$ to be orthogonal to $T_\Omega\Omega^3_{+,1}(M)$, hence a constrained critical point $\Omega$ satisfies $Q(\Omega)=\mu_0\Omega$ for some constant $\mu_0\in\R$. In view of Proposition~\ref{no_expanders} we obtain an alternative characterisation of $\Gt$-solitons.

\begin{cor}
A positive $3$-form $\Omega$ is a $\Gt$-soliton \iff $\Omega$ is a critical point of $\mc{D}$ subject to $\mc{H}\equiv1$.
\end{cor}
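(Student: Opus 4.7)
The plan is to combine Proposition~\ref{no_expanders}(i) with a Lagrange multiplier argument that is essentially sketched in the paragraph preceding the statement. The key observation is that both conditions reduce to the single equation $Q(\Omega) = \mu_0 \Omega$ for some real constant $\mu_0$.

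For the forward direction, I would start from a $\Gt$-soliton $(\Omega_0,X_0,\mu_0)$. By Proposition~\ref{no_expanders}(i), such a triple is automatically trivial, so $Q(\Omega_0) = \mu_0 \Omega_0$. To make $\Omega_0$ lie in $\Omega^3_{+,1}(M)$ I would rescale by $\lambda = \mc{H}(\Omega_0)^{-3/7}$, using the scaling identities $\mc{H}(\lambda\Omega) = \lambda^{7/3}\mc{H}(\Omega)$ (coming from $\vol_{\lambda\Omega}=\lambda^{7/3}\vol_\Omega$) and $Q(\lambda\Omega)=\lambda^{1/3}Q(\Omega)$ (Lemma~\ref{Q-homog}), so that the rescaled form $\Omega := \lambda\Omega_0$ satisfies $Q(\Omega) = \lambda^{-2/3}\mu_0\,\Omega$. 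Then for any $\dot\Omega \in T_\Omega\Omega^3_{+,1}(M) = \Omega^\perp$ we have
\[
D_\Omega\mc{D}(\dot\Omega) = -\langle Q(\Omega),\dot\Omega\rangle_\Omega = -\lambda^{-2/3}\mu_0\,\langle\Omega,\dot\Omega\rangle_\Omega = 0,
\]
so $\Omega$ is a critical point of $\mc{D}|_{\Omega^3_{+,1}(M)}$.

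For the reverse direction, I would let $\Omega$ be a critical point of $\mc{D}$ subject to $\mc{H}\equiv 1$. Since by \eqref{phider} one has $D_\Omega\mc{H}(\dot\Omega) = \tfrac{1}{3}\langle\dot\Omega,\Omega\rangle_\Omega$, the constraint manifold has tangent space $T_\Omega\Omega^3_{+,1}(M) = \Omega^\perp$. The criticality condition says that $\grad\mc{D}(\Omega) = -Q(\Omega)$ is $L^2$-orthogonal to $\Omega^\perp$, hence $Q(\Omega) = \mu_0\Omega$ for some $\mu_0\in\R$. This is the trivial $\Gt$-soliton equation with $X_0 = 0$, so $\Omega$ is a $\Gt$-soliton.

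There is no real obstacle here; the entire content sits in the preceding Lagrange multiplier discussion together with Proposition~\ref{no_expanders}(i), which eliminates the Lie derivative term from the defining equation of a $\Gt$-soliton. The only mild subtlety is the volume normalisation in the forward direction, handled by the scaling identities above.
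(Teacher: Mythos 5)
Your proposal is correct and follows essentially the same route as the paper: the Lagrange multiplier discussion preceding the statement (tangent space $\Omega^\perp$ via \eqref{phider}, so constrained criticality is equivalent to $Q(\Omega)=\mu_0\Omega$) combined with Proposition~\ref{no_expanders}(i) to kill the Lie derivative term, the converse being the trivial soliton with $X_0=0$. The only addition is your volume-normalising rescaling, which the paper elides by reading the constrained critical point condition scale-invariantly as the Lagrange condition $Q(\Omega)=\mu_0\Omega$; this is harmless and arguably tidier.
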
 

\begin{rmk}
The results of this section apply mutatis mutandis to the generalised Dirichlet energy functionals $\mc{D}_\nu$, $\nu \in \R^4_+$. More precisely, we say that $(\Omega_0, X_0,\mu_0)$ is a {\em $\mc{D}_\nu$-soliton} if the equation $Q_\nu(\Omega_0) = \mu_0 \Omega_0 + \mc{L}_{X_0}\Omega_0$ holds. Since $\mc{D}_\nu$ shares the same symmetries with $\mc{D}$, we obtain the Bianchi identity $\lambda_\Omega(Q_\nu(\Omega))=0$. Hence we may deduce that any $\mc{D}_\nu$-soliton is trivial with $\mu_0 \leq 0$. The explicit solution to the soliton equation remains unchanged.
\end{rmk}
%
%
%
\section{Examples}
%
%
\subsection{Homogeneous spaces}
%
Consider a compact homogeneous space $M=G/H$. Then $G$ acts on $M$ via diffeomorphisms coming from left translations. Let $\mf{g}=\mf{h}\oplus\mf{m}$ be the decomposition at Lie algebra level from the inclusion $H\hookrightarrow G$, where $\mf{m}$ is some complement invariant under the isotropy action of $H$ (the adjoint action of $G$ restricted to $H$). The space of $G$-invariant $\Gt$-forms is precisely the space of $H$-invariant $\Gt$-forms in $\Lambda^3\mf{m}^\ast$. Since invariant critical points can be obtained by restricting the functional to invariant $\Gt$-forms, we are left with a finite-dimensional variational problem. We will illustrate this procedure for the Dirichlet energy functional $\mc D$.

\medskip

\paragraph{\em The round sphere.}
We think of $S^7$ as the homogeneous space $\spin(7)/\Gt$. Then $\mf{spin}(7)=\Lambda^2\R^{7*}=\mf{g}_2\oplus\mf{m}$ by~\eqref{pqdecomp}, where $\mf{m}$ is isomorphic to the $7$-dimensional irreducible vector representation of $\Gt$. Hence $\Lambda^3\mf{m}^\ast\cong\mb{1}\oplus\mf{m}\oplus\odot^2_0\mf{m}$ (also cf.\ our first convention at the end of Section~\ref{introduction}) is a decomposition into irreducible $\Gt$-modules, and we find a one-dimensional space of $\spin(7)$-invariant $\Gt$-forms spanned by $\Omega_0$. In fact, if we think of $S^7$ as the unit octonians with induced metric $g_0$ (the round metric), then at $p\in S^7$, $\Omega_{0,p}(u,v,w)=g_{0,p}\big(p,u\cdot(\bar v\cdot w)-w\cdot(\bar v\cdot u)\big)$ (here $\,\bar{}\,$ and $\cdot$ denote conjugation and multiplication on $\Oc$). Since $Q(\Omega_0)$ must be also $\spin(7)$-invariant by Lemma~\ref{invariance}, we deduce $Q(\Omega_0)=c\Omega_0$ for some nonpositive constant $c$. Furthermore, $H^3(S^7;\R)=0$ so that $\Omega_0$ cannot be torsionfree, whence $Q(\Omega_0)\not=0$.

\medskip

\paragraph{\em The squashed sphere.}
Now consider $S^7$ as the homogeneous space $G/H=\Sp(2)\times\Sp(1)/\Sp(1)\times\Sp(1)$ defined by the embedding
\ben
(a,b)\in\Sp(1)\times\Sp(1)\mapsto(\left(\begin{array}{cc}a&0\\0&b\end{array}\right)\!,b).
\ee
The complex irreducible representations of $\Sp(1)\cong\SU(2)$ are obtained from the symmetric powers $\sigma_p=\odot^p\C^2$ of the standard vector representation on $\C^2$. Endowed with some negative multiple of the Killing form $G/H$ becomes a normal Riemannian homogeneous space (cf.~Definition 7.86 in \cite{be87}) with orthogonal decomposition $\mf{g}=\mf{h}\oplus\mf{m}$. As an $\Sp(1)\times\Sp(1)$-space, $\mf{m}=\mb{1}\otimes\sigma_2\oplus\sigma_1\otimes\sigma_1=:\mf{m}'\oplus\mf{m}''$. Here, by abuse of notation, $\sigma_1\otimes\sigma_1$ (which is of real type) also denotes  the underlying real representation. In the resulting decomposition of $\Lambda^3\mf{m}^*$, we find two trivial representations, namely $\Lambda^3\mf{m}'^*\cong\R$ and one in $\mf{m}'^*\otimes\Lambda^2\mf{m}''^*$ (cf.~\cite{alse11}). If $f_1$, $f_2$ and $f_3$ denotes an orthonormal basis of $\mf{m}'$, the first one is spanned by\footnote{Here and in the sequel, $f^{123}$ will be shorthand for $f^1\wedge f^2\wedge f^3$.} $\Omega_1=f^{123}$. For the second invariant form $\Omega_2$ we note that $\Lambda^2\mf{m}''^\ast=\mb{1}\otimes\sigma_2\oplus\sigma_2\otimes\mb{1}$ which is just the decomposition into self- and antiselfdual forms. Consequently, if $e_1,\ldots,e_4$ is an orthonormal basis for $\mf{m}''$, then $\Omega_1=\sum_k f^k\wedge\omega_k$ where
\ben
\omega_1=e^{12}+e^{34},\quad\omega_2=e^{13}-e^{24},\quad\omega_3=e^{14}+e^{23}.
\ee
The $G$-invariant forms
\ben
\mc I=\{\Omega_{a,b}:=-a^3\Omega_1+ab^2\Omega_2\,|\,a,\,b>0\}
\ee
are of $\Gt$-type and compatible with the natural orientation. To compute the $G$-invariant critical points we must compute $\mc D$ on $\mc I$. We first note that $\Omega_{a,b}$ induces the metric $g_{a,b}=-a^2B|_{\mf m'}-b^2B|_{\mf m''}$ so that $\vol_{a,b}=a^3b^4e^{1234}\wedge f^{123}$ and 
\ben
\star_{a,b}\Omega_{a,b}=-b^4e^{1234}+a^2b^2(f^{23}\wedge\omega_1-f^{13}\wedge\omega_2+f^{12}\wedge\omega_3).
\ee 
We compute the commutators $[\cdot\,,\cdot]_{\mf m}$ and thus the exterior differentials of $e_1,\ldots,f_3$. Upon suitably rescaling $B$ we find
\ben
d\Omega_{a,b}=12ab^2e^{1234}+(10ab^2+2a^3)(-f^{23}\wedge\omega_1+f^{13}\wedge\omega_2-f^{12}\wedge\omega_3)
\ee
and $d\star_{a,b}\Omega_{a,b}=0$. Consequently, $|d\Omega_{a,b}|^2=24(7a^2b^{-4}+25a^{-2}+10b^{-2})$, whence
\ben
\mc{D}(\Omega_{a,b})=12(7a^5+10a^3b^2+25ab^4)\mr{Vol},
\ee
with $\mr{Vol}$ the total volume of $G/H$ with respect to $\vol_{1,1}=e^{1234}\wedge f^{123}$. Subject to the constraint $a^3b^4=1$ the critical point equations read
\ben
7a^4+6a^2b^2+5b^4=3\mu a^2b^4,\quad a^2+5b^2=\mu a^2 b^2,\quad a^3b^4=1
\ee
for some constant $\tau$. Substituting $u=a^2$ and $v=b^2$ shows that $u=v$ and $\mu=6/v$. Hence $a=1$, $b=1$ and $\mu=6$ is the unique solution which gives the soliton $\Omega_{1,1}$. The resulting metric is the so-called {\em squashed} metric. 
%
\subsection{Nearly parallel $\Gt$-structures}
The previous two examples define in fact {\em nearly parallel $\Gt$-structures} (see for instance~\cite{fkms97}). These were first investigated by Gray \cite{gr71} (who called them weak holonomy $\Gt$-structures). This is a $\Gt$-structure given by a $\Gt$-form $\Omega$ satisfying
\ben
d\Omega=\tau_0\star_\Omega\Omega
\ee
for some constant $\tau_0\not=0$. In particular, $d\star_\Omega\Omega=0$ so that alternatively, we may characterise nearly parallel $\Gt$-structures as those for which all torsion forms but $\tau_0$ do vanish. By abuse of language, we refer to such an $\Omega$ itself as a nearly parallel $\Gt$-structure. The associated metric is necessarily Einstein with positive constant scalar curvature $s_\Omega=\tfrac{21}{7}\tau_0^2$.

\begin{thm}\label{weakg2sol}
If $\Omega$ is a nearly parallel $\Gt$-structure, then
\be\label{nearly-parallel}
Q_\nu(\Omega)=-\tfrac{5}{42}\nu_0\tau_0^2(\Omega)\Omega
\ee
for all $\nu=(\nu_0,\nu_1,\nu_2,\nu_3) \in \R^4_+$. In particular, $\Omega$ is a $\Gt$-soliton.
\end{thm}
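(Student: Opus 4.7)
The plan is to reduce the computation to $\grad\mc D_0$ at $\Omega$ and then to extract this gradient from a Stokes-type identity rather than computing $\dot\tau_0$ pointwise.

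\textbf{Step 1: $Q_i(\Omega)=0$ for $i\in\{1,2,3\}$.} On a nearly parallel $\Omega$ the torsion form $\tau_i$ vanishes for these indices, and the integrand of $\mc D_i$ is quadratic in $\tau_i$. Writing
\be\nonumber
D_\Omega\mc D_i(\dot\Omega)=\int_M\langle\tau_i,\dot\tau_i\rangle_\Omega\vol_\Omega+\tfrac{1}{2}\int_M D_\Omega g(\dot\Omega)(\tau_i,\tau_i)\vol_\Omega+\tfrac{1}{2}\int_M|\tau_i|^2_\Omega\,D_\Omega\phi(\dot\Omega),
\ee
every term carries a factor of $\tau_i$ and hence vanishes at $\Omega$. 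So $Q_i(\Omega)=0$ and $Q_\nu(\Omega)=\nu_0 Q_0(\Omega)$.

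\textbf{Step 2: compute $Q_0(\Omega)$.} The key pointwise identity is $d\Omega'\wedge\Omega'=7\tau_0(\Omega')\vol_{\Omega'}$ for every $\Omega'\in\Omega^3_+(M)$. It follows from \eqref{torsion} together with two representation-theoretic vanishings: $\Omega'\wedge\Omega'=0$, since $\Lambda^6\cong\Lambda^1$ carries no $\Gt$-invariants, and $\star_{\Omega'}\tau_3\wedge\Omega'=\langle\tau_3,\Omega'\rangle_{\Omega'}\vol_{\Omega'}=0$ since $\tau_3\in\Lambda^3_{27}$ is orthogonal to $\Omega'\in\Lambda^3_1$, together with $\star_{\Omega'}\Omega'\wedge\Omega'=7\vol_{\Omega'}$. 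Therefore
\be\nonumber
2\mc D_0(\Omega')=\tfrac{1}{7}\int_M\tau_0(\Omega')\,d\Omega'\wedge\Omega'.
\ee
Linearising both sides at the nearly parallel $\Omega$, I use that $\tau_0$ is \emph{constant} and $d\Omega=\tau_0\star_\Omega\Omega$, apply Stokes to $\int_M d\dot\Omega\wedge\Omega=\int_M\dot\Omega\wedge d\Omega$, and invoke \eqref{phider} to write $D_\Omega\phi(\dot\Omega)=\tfrac{1}{3}\langle\dot\Omega,\Omega\rangle_\Omega\vol_\Omega$. Comparing the result with the direct expansion $2D_\Omega\mc D_0(\dot\Omega)=2\int_M\tau_0\dot\tau_0\vol_\Omega+\int_M\tau_0^2\,D_\Omega\phi(\dot\Omega)$ yields the auxiliary identity $\int_M\tau_0\dot\tau_0\vol_\Omega=-\tfrac{\tau_0^2}{21}\langle\dot\Omega,\Omega\rangle_{L^2_\Omega}$. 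Substituting back gives $D_\Omega\mc D_0(\dot\Omega)=\tfrac{5\tau_0^2}{42}\langle\dot\Omega,\Omega\rangle_{L^2_\Omega}$, so $Q_0(\Omega)=-\tfrac{5}{42}\tau_0^2\,\Omega$. Combined with Step 1, this establishes \eqref{nearly-parallel}. Since $\tau_0$ is constant the right-hand side has the form $\mu_0\Omega+\mc L_{X_0}\Omega$ with $\mu_0=-\tfrac{5}{42}\nu_0\tau_0^2$ constant and $X_0=0$, so $\Omega$ is a (shrinking) $\mc D_\nu$-soliton.

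\textbf{Main obstacle.} The only delicate piece is the linearisation in Step 2. One could instead extract $\dot\tau_0$ from the $\Lambda^4_1$-projection of the linearised identity \eqref{torsion} via \eqref{thetader}, but this forces a full analysis of the projections $[\dot\Omega]_q$. The Stokes trick avoids this and reduces everything to the single numerical identity $\tfrac{1}{6}-\tfrac{1}{21}=\tfrac{5}{42}$.
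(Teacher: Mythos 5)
Your proposal is correct, and its key step differs from the paper's. Step 1 (vanishing of $Q_i(\Omega)$ for $i=1,2,3$ because every term of $D_\Omega\mc D_i$ carries a factor of $\tau_i$) is exactly the paper's first reduction, so that $Q_\nu(\Omega)=-\nu_0\grad\mc D_0(\Omega)$. For the remaining gradient, the paper computes $\grad\mc D_0$ at an \emph{arbitrary} positive $3$-form, namely formula~\eqref{d0grad}, by linearising the Hodge star pointwise (writing $\dot\Omega=\dot A^*\Omega$ and using $\dot\star_\Omega=\mr{Tr}(\dot A)\star_\Omega$ on functions) and differentiating $\tau_0(\Omega)=\star_\Omega(d\Omega\wedge\Omega)/7$, then specialises to the nearly parallel case where $-\tfrac16+\tfrac27=\tfrac{5}{42}$. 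You instead exploit the same pointwise identity $d\Omega'\wedge\Omega'=7\tau_0(\Omega')\vol_{\Omega'}$ at the integral level: equating the two expansions of $D_\Omega(2\mc D_0)$ — one from the definition, one from $\tfrac17\int\tau_0\,d\Omega'\wedge\Omega'$ after Stokes and constancy of $\tau_0$ — isolates $\int_M\tau_0\dot\tau_0\vol_\Omega=-\tfrac{\tau_0^2}{21}\langle\dot\Omega,\Omega\rangle_{L^2_\Omega}$ without ever computing $\dot\tau_0$ or $\dot\star_\Omega$, and since this holds for every $\dot\Omega$ it yields the full gradient $\grad\mc D_0(\Omega)=\tfrac{5}{42}\tau_0^2\Omega$, not merely its $\Omega$-component (so it is genuinely stronger than the Euler-homogeneity check in the paper's remark, which only pins down the coefficient once proportionality is known). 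The trade-off: your argument is shorter and self-contained but works only at nearly parallel structures, where $\tau_0$ is constant and $d\star_\Omega\Omega=0$; the paper's pointwise linearisation machinery ($\dot\star_\Omega$, $p_\Omega$, etc.) gives the gradient formula in general and is reused later for the second variation and the deformation theory in Section 5. Two trivial remarks: $\Omega'\wedge\Omega'=0$ already follows from $\Omega'$ being of odd degree, so the representation-theoretic argument is unnecessary (though valid), and your concluding sentence correctly identifies the soliton as trivial ($X_0=0$) and shrinking, consistent with Proposition~\ref{no_expanders}.
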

\begin{proof}
First we note that $D_\Omega\mc{D}_k(\dot\Omega)=\int_M\dot\tau_{k,\Omega}\wedge\star_\Omega\tau_k(\Omega)+\tfrac{1}{2}\int_M\tau_k(\Omega)\wedge\dot\star_\Omega\tau_k(\Omega)$. But for a nearly parallel $\Gt$-form $\Omega$ we have $\tau_k=0$, $k\not=0$, so that $\grad\mc{D}_k(\Omega)=0$ and in particular $Q_\nu(Q)=-\nu_0\grad\mc D_0(\Omega)$. We contend that for general $\Omega\in\Omega^3_+(M)$,
\be\label{d0grad}
\grad\mc{D}_0(\Omega)=-\tfrac{1}{6}\tau_0^2\Omega+\tfrac{2}{7}\tau_0\star_\Omega d\Omega+\tfrac{1}{7}\star_\Omega(d\tau_0\wedge\Omega).
\ee
If this is true, then $\grad\mc D_0(\Omega)=\tfrac{5}{42}c^2\Omega$ for nearly parallel $\Omega$, whence the result. It remains to show~\eqref{d0grad}. We first determine $\dot\star_\Omega$, the derivative of the map $\Lambda^3_+\to\Hom(\Lambda^0,\Lambda^7)$ which sends $\Omega$ to $\star_\Omega$. As this is a pointwise computation we can write $\dot\Omega=\dot A^*\Omega$, where $\dot A=\dot A_0$ for a smooth curve $A_t\subset\GL(7)$ with $A_0=\Id$. Then
\ben
\dot\star_\Omega=\tfrac{d}{dt}\bigr|_{t=0}\star_{A^*_t\Omega}=\tfrac{d}{dt}\bigr|_{t=0}A^*_t\star_\Omega A^{-1*}_t=\dot A^*\star_\Omega,
\ee
for $\GL(7)$ acts trivially on $0$-forms. In general, if $v,w\in\Lambda^1$, the action is given by $(v\otimes w)^*\alpha^p=v\wedge(w\llcorner\alpha^p)$ for $\alpha^p\in\Lambda^p$. Using the standard formul{\ae} $\star_\Omega(v\llcorner\alpha^p)=(-1)^{p+1}v\wedge\star_\Omega\alpha^p$ and $\star_\Omega(v\wedge\alpha^p)=(-1)^pv\llcorner\star_\Omega\alpha^p$ we get
\ben
\dot A^*\star_\Omega=\mr{Tr}(\dot A)\star_{\Omega}-\star_\Omega(\dot A^t)^*=\mr{Tr}(\dot A)\star_\Omega.
\ee
On the other hand, we have $\dot A^* \Omega=\dot A^*_1 \Omega+\dot A^*_7 \Omega+\dot A^*_{27}\Omega$ where we used the decomposition of $\dot A \in\Lambda^1\otimes\Lambda^1$ given by~\eqref{endodecomp}. Since $\dot A_1=\tfrac{3}{7}\mr{Tr}(\dot A)\id$ we have
\be\label{a1comp}
\dot A_1^*\Omega=\tfrac{3}{7}\mr{Tr}(\dot A)\Omega.
\ee
Hence
\ben
\dot\star_\Omega\tau_0=\tau_0\mr{Tr}(\dot A)\star_\Omega 1=\tfrac{1}{7}\tau_0\mr{Tr}(\dot A)\Omega\wedge\star_\Omega\Omega=\tfrac{1}{3}\tau_0\dot\Omega\wedge\star_\Omega\Omega.
\ee
To compute the linearisation of $\tau_0(\Omega)=\star_\Omega(d\Omega\wedge\Omega)/7$ we note that $\star_\Omega^2=\id$ implies $\star_\Omega\dot\star_\Omega=-\dot\star_\Omega\star_\Omega$, whence
\begin{align*}
\dot\tau_{0,\Omega} & = \dot\star_\Omega\big(\star_\Omega\tau_0(\Omega)\big)+\tfrac{1}{7}\star_\Omega(d\dot\Omega\wedge\Omega+\dot\Omega\wedge d\Omega)\\
& = -\tfrac{1}{3}\tau_0(\Omega)\star_\Omega(\dot\Omega\wedge\star_\Omega\Omega)+\tfrac{1}{7}\star_\Omega(d\dot\Omega\wedge\Omega+\dot\Omega\wedge d\Omega).
\end{align*}
From
\ben
\langle\grad\mc D_0(\Omega),\dot\Omega\rangle_\Omega=\int_M\tau_0\dot\tau_{0\Omega}\vol_\Omega+\tfrac{1}{6}\int_M\tau_0^2\dot\Omega\wedge\star_\Omega\Omega
\ee
equation~\eqref{d0grad} easily follows.
\end{proof}

\begin{rmk}
The factor appearing in the soliton equation~\eqref{nearly-parallel} can also be computed using the homogeneity of $\mc D_\nu$: If $d\Omega=\tau_0\star_\Omega\Omega$, then by Euler's rule
\ben
\langle Q_\nu(\Omega), \Omega \rangle_\Omega=- D_\Omega\mc D_\nu(\Omega)=-\tfrac{5}{3} \mc D_\nu(\Omega)=-\tfrac{5}{42}\nu_0 \tau_0^2\langle\Omega,\Omega\rangle_\Omega.
\ee
In particular it follows that
\be\label{tau0exp}
\tau_0^2(\Omega)=\frac{2}{\nu_0}\cdot\frac{\mc D(\Omega)}{\mc H(\Omega)}.
\ee
\end{rmk}

\begin{cor}
Let $\Omega \in \Omega^3_+(M)$ be torsion-free. Then there exists a neighbourhood of $\Omega$ in $\Omega^3_+(M)$ with respect to the $C^\infty$-topology which does not contain any shrinking $\mc D_\nu$-solitons, and in particular no nearly parallel $\Gt$-structures.
\end{cor}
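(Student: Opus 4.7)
The plan is to argue by contradiction, playing the stability result Theorem~\ref{nustab} off against the finite-time singularity formation of shrinking $\mc D_\nu$-solitons. Suppose the corollary fails: then one can extract a sequence $(\Omega_k) \subset \Omega^3_+(M)$ of shrinking $\mc D_\nu$-solitons with $\Omega_k \to \Omega$ in the $C^\infty$-topology, with associated constants $\mu_0^{(k)} < 0$ and vector fields $X_0^{(k)}$.

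Next I would write down the corresponding maximal soliton solutions explicitly. By the $\mc D_\nu$-analogue of the soliton ODE~\eqref{ODE} (valid by the concluding remark of Section~3), the maximal solution to~\eqref{gDF} starting from $\Omega_k$ is
\ben
(\Omega_k)_t = \mu_k(t)\,(\varphi^{(k)}_t)^*\Omega_k, \qquad \mu_k(t) = \bigl(\tfrac{2\mu_0^{(k)}}{3}\, t + 1\bigr)^{3/2},
\ee
defined on $[0, T_{max}^{(k)})$ with $T_{max}^{(k)} = -3/(2\mu_0^{(k)}) < \infty$; since $\mu_k(t)\searrow 0$ as $t \nearrow T_{max}^{(k)}$, the flow leaves $\Omega^3_+(M)$ and cannot be extended. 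On the other hand, as soon as $k$ is large enough that $\Omega_k$ lies in the $C^\infty$-neighbourhood furnished by Theorem~\ref{nustab}, the $\mc D_\nu$-flow with initial datum $\Omega_k$ is defined on the whole of $[0,\infty)$. By the uniqueness statement in Theorem~\ref{nuste}, these two solutions coincide on any common short interval around $t=0$ and, by concatenation, yield the same unique maximal solution; its existence time cannot be both finite and infinite, yielding the desired contradiction.

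For the additional statement about nearly parallel $\Gt$-structures, I would invoke Theorem~\ref{weakg2sol}, which gives $Q_\nu(\Omega') = -\tfrac{5}{42}\nu_0\,\tau_0(\Omega')^2\,\Omega'$ for any nearly parallel $\Omega'$, with $\tau_0(\Omega')$ a nonzero constant by the very definition of \emph{nearly parallel}. Thus $\Omega'$ is a trivial shrinking $\mc D_\nu$-soliton, and the first part of the corollary excludes it from some $C^\infty$-neighbourhood of $\Omega$.

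I do not anticipate a substantive obstacle here; the one step that deserves care is ensuring that the explicit soliton formula really represents the unique maximal $\mc D_\nu$-flow, so that one is entitled to compare it with the long-time existence provided by stability. This is however precisely encoded in the short-time uniqueness of Theorem~\ref{nuste}, together with the observation that $\mu_k(t)\to 0$ forces the solution out of the open cone $\Omega^3_+(M)$ at $T_{max}^{(k)}$.
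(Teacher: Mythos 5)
Your argument is correct and is essentially the paper's own proof: take the neighbourhood furnished by the stability theorem for the generalised flow, note that a shrinking $\mc D_\nu$-soliton in it would have $T_{max}<\infty$ by Proposition~\ref{no_expanders} (the explicit collapsing solution), and contradict long-time existence via uniqueness; the nearly parallel case likewise reduces to Theorem~\ref{weakg2sol}. Your version merely spells out the sequence extraction and the identification of the explicit soliton solution with the unique maximal flow, which the paper leaves implicit.
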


\begin{proof}
Choose a neighbourhood $\mathcal U \subset \Omega^3_+(M)$ such that for any initial condition $\Omega_0 \in \mathcal U$ the conclusion of Theorem~\ref{stab} holds. Now if $\Omega_0$ were a shrinking $\mc D_\nu$-soliton, then $T_{max}<\infty$ according to Proposition \ref{no_expanders}, which is impossible. 
\end{proof}

\begin{rmk}
The previous corollary should be compared with Theorem 1.2 in~\cite{dww05} which asserts that a Ricci-flat metric which admits nonzero parallel spinors (as it is the case for $g_\Omega$ with $\Omega$ torsion-free) cannot be smoothly deformed into a metric of positive scalar curvature.
\end{rmk}
%
%
%
\section{Soliton deformations}
Let $\bar \Omega \in \Omega^3_+(M)$ be a fixed nearly parallel $\Gt$-structure, i.e.~$d \bar \Omega = \bar\tau_0 \star_{\bar \Omega} \bar \Omega$ for some constant $\bar \tau_0 \neq 0$. In this final section we linearise the $\Gt$-soliton equation
\be\label{solitoneq}
S_{\bar \Omega}(\Omega):=Q(\Omega)+\tfrac{5}{6}\bar\tau_0^2\Omega=0
\ee
at $\bar \Omega$ and study the premoduli space of $\Gt$-soliton deformations.
%
\subsection{The linearised soliton equation}
In order to linearise the $\Gt$-soliton equation we need a lemma first. Recall the map
\ben
\Theta:\Omega^3_+(M)\to\Omega^4(M),\quad\Omega\mapsto\star_\Omega\Omega
\ee
from Convention (ii) in Section~\ref{introduction}. Its linearisation at $\Omega$ is given by $\dot\Theta_\Omega=\star_\Omega p_\Omega(\dot\Omega)$ where $p_\Omega(\dot\Omega):=\tfrac{4}{3}[\dot\Omega]_1+[\dot\Omega]_7-[\dot\Omega]_{27}$.

\begin{lem}\label{corddotTheta}
Let $\Omega\in\Omega^3_+(M)$. For $x\in M$ let $\Omega_t = A_t^*\Omega_x$ for a curve $A_t\subset\GL(7)$ such that $A_0=\Id_{T_xM}$. If we define $s_\Omega (\dot \Omega) := [\dot \Omega]_1 - [\dot \Omega]_7 + [\dot \Omega]_{27}$, then for the second derivative $\ddot\Theta_\Omega:=\frac{d^2}{dt^2}\bigr|_{t=0}\Theta(\Omega_t) $ at $x$ we find
\begin{align*}
\ddot \Theta_\Omega =& \frac{1}{3} g (\Omega, \dot \Omega) \star_\Omega(p_\Omega - s_\Omega)\dot \Omega + 2 \star_\Omega(\dot A^t)^{*2}\Omega - \star_\Omega s_\Omega \ddot \Omega\\
&+ \frac{1}{3}\big(g(\ddot \Omega,\Omega) -  g(s_\Omega \dot \Omega, \dot \Omega)\big) \star_\Omega \Omega.
\end{align*}
In particular we have
\ben
\ddot \Theta_\Omega = \frac{1}{3} g (\Omega, \dot \Omega) \star_\Omega(p_\Omega - s_\Omega)\dot \Omega + 2 \star_\Omega(\dot A^t)^{*2}\Omega - \frac{1}{3} g(s_\Omega \dot \Omega, \dot \Omega) \star_\Omega \Omega.
\ee
for $\ddot \Omega = 0$.
\end{lem}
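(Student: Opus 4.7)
The strategy is to exploit the pointwise $\GL(7)_+$-equivariance of the map $\Theta$. At the point $x$, the Hodge star transforms as $\star_{A^*\Omega_x}=A^*\star_{\Omega_x}A^{-1*}$ on forms, whence
\ben
\Theta(A^*\Omega_x)=\star_{A^*\Omega_x}(A^*\Omega_x)=A^*\star_{\Omega_x}\Omega_x=A^*\Theta(\Omega_x).
\ee
This reduces the computation of $\ddot\Theta_\Omega$ to twice-differentiating the operator $A_t^*$ applied to the fixed $4$-form $\Theta(\Omega)$. Taylor-expanding $A_t^*$ on a $k$-form $\beta$ as $A_t^*\beta=\beta+t\dot A^*\beta+\tfrac{t^2}{2}\bigl(\ddot A^*\beta+2(\dot A^t)^{*2}\beta\bigr)+O(t^3)$, I would record the two parallel identities
\ben
\ddot\Theta_\Omega=\ddot A^*\Theta(\Omega)+2(\dot A^t)^{*2}\Theta(\Omega),\qquad \ddot\Omega=\ddot A^*\Omega+2(\dot A^t)^{*2}\Omega.
\ee

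Differentiating the equivariance $\Theta(A^*\Omega)=A^*\Theta(\Omega)$ once at $A=\Id$ yields the algebraic identity $\alpha^*\Theta(\Omega)=\star_\Omega p_\Omega(\alpha^*\Omega)$ for all $\alpha\in\End(T_xM)$, which is precisely~\eqref{thetader}. Setting $\alpha=\ddot A$ and substituting $\ddot A^*\Omega=\ddot\Omega-2(\dot A^t)^{*2}\Omega$ gives
\ben
\ddot\Theta_\Omega=\star_\Omega p_\Omega(\ddot\Omega)-2\star_\Omega p_\Omega\bigl((\dot A^t)^{*2}\Omega\bigr)+2(\dot A^t)^{*2}\Theta(\Omega).
\ee
The first term can be repackaged via the elementary pointwise identity $p_\Omega=-s_\Omega+\tfrac{7}{3}[\cdot]_1$ on $\Lambda^3$, which follows directly from the definitions of $p_\Omega,s_\Omega$ together with $[\dot\Omega]_1=\tfrac{1}{7}g_\Omega(\dot\Omega,\Omega)\Omega$; this gives $\star_\Omega p_\Omega(\ddot\Omega)=-\star_\Omega s_\Omega\ddot\Omega+\tfrac{1}{3}g(\ddot\Omega,\Omega)\star_\Omega\Omega$, which matches the two $\ddot\Omega$-linear terms of the stated formula.

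The main obstacle, and the step I expect to require the most care, is the residual identity
\ben
2(\dot A^t)^{*2}\Theta(\Omega)-2\star_\Omega p_\Omega\bigl((\dot A^t)^{*2}\Omega\bigr)=\tfrac{1}{3}g(\Omega,\dot\Omega)\star_\Omega(p_\Omega-s_\Omega)\dot\Omega+2\star_\Omega(\dot A^t)^{*2}\Omega-\tfrac{1}{3}g(s_\Omega\dot\Omega,\dot\Omega)\star_\Omega\Omega,
\ee
which must be verified at the $\Gt$-irreducible level. To prove it I would decompose $\dot A=S+K$ into its $g_\Omega$-symmetric and antisymmetric parts: using~\eqref{endodecomp} one has $S^*\Omega\in\Lambda^3_1\oplus\Lambda^3_{27}$ and $K^*\Omega\in\Lambda^3_7$, hence the clean algebraic fact $s_\Omega\dot\Omega=(\dot A^t)^*\Omega$. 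Combined with the trace identity $g(\Omega,\dot\Omega)=3\tr(\dot A)$ from the proof of Theorem~\ref{weakg2sol} and the first-order relation $\dot\star_\Omega=\dot A^*\star_\Omega-\star_\Omega\dot A^*$ coming from $\star_{A_t^*\Omega}=A_t^*\star_\Omega A_t^{-1*}$, the residual identity reduces to matching coefficients on the irreducible $\Gt$-summands of $\Lambda^4$. The ``In particular'' case $\ddot\Omega=0$ is then obtained by setting the two $\ddot\Omega$-linear terms to zero.
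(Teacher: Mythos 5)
Your overall strategy is the paper's: use the pointwise equivariance $\Theta(A^*\Omega)=A^*\Theta(\Omega)$ and differentiate twice along $A_t$. The flaw is the expansion you build everything on: the second derivative of $t\mapsto A_t^*\beta$ is \emph{not} $\ddot A^*\beta+2(\dot A^t)^{*2}\beta$. Writing $A_t^*=A_{t_0}^*(A_tA_{t_0}^{-1})^*$ and differentiating twice (as in the paper) gives $\tfrac{d^2}{dt^2}\bigr|_{t=0}A_t^*\beta=\bigl((\dot A^*)^2+\ddot A^*-(\dot A^2)^*\bigr)\beta$: the cross term is the ``insert $\dot A$ into two distinct slots'' operator, which equals $(\dot A^*)^2-(\dot A^2)^*$ and involves no transpose. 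Concretely, take $A_t=(1+t)\Id$ and $\beta$ a $3$-form: then $A_t^*\beta=(1+t)^3\beta$, so the second derivative is $6\beta$, whereas your formula gives $2(\Id)^{*2}\beta=18\beta$ (the infinitesimal action of $\Id$ on a $p$-form is multiplication by $p$). The transposed operators in the lemma do not come from expanding $A_t^*$ at all; they are created only when one commutes the infinitesimal action past the Hodge star via $\dot A^*\star_\Omega=\tr(\dot A)\star_\Omega-\star_\Omega(\dot A^t)^*$ and uses $(\dot A^t)^*\Omega=s_\Omega\dot\Omega$, which is how the paper produces $2\star_\Omega(\dot A^t)^{*2}\Omega$ together with $-\star_\Omega s_\Omega\ddot\Omega$.

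Because of this, the ``residual identity'' you propose to verify by $\Gt$-representation theory is false, not merely delicate: with $\dot A=\Id$, $\ddot A=0$ (so $\dot\Omega=3\Omega$) its left-hand side is $2(\Id)^{*2}\Theta(\Omega)-2\star_\Omega p_\Omega\bigl((\Id)^{*2}\Omega\bigr)=(32-24)\star_\Omega\Omega=8\star_\Omega\Omega$, while its right-hand side is $(7+18-21)\star_\Omega\Omega=4\star_\Omega\Omega$. (Your intermediate formula for $\ddot\Theta_\Omega$ likewise gives $16\star_\Omega\Omega$ in this example instead of the correct $12\star_\Omega\Omega$.) Your correct ingredients -- $s_\Omega\dot\Omega=(\dot A^t)^*\Omega$, $g(\Omega,\dot\Omega)=3\tr\dot A$, and the repackaging $p_\Omega=-s_\Omega+\tfrac73[\,\cdot\,]_1$ for the $\ddot\Omega$-terms -- all survive, but to complete the proof you must work with the three terms $(\dot A^*)^2+\ddot A^*-(\dot A^2)^*$ applied to both $\Theta(\Omega)$ and $\Omega$, convert them using $\dot A^*\star_\Omega=\tr(\dot A)\star_\Omega-\star_\Omega(\dot A^t)^*$, and in addition compute $\tr(\ddot A-\dot A^2)=\tfrac13 g(\Omega,\ddot\Omega)-\tfrac13 g(s_\Omega\dot\Omega,\dot\Omega)$ via $[\dot A^*\dot\Omega]_1=\tfrac17 g(s_\Omega\dot\Omega,\dot\Omega)\Omega$; this trace term, which produces part of the coefficient of $\star_\Omega\Omega$ in the lemma, is absent from your outline.
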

\begin{proof}
Writing $A_t = A_t A_{t_0}^{-1} A_{t_0}$ we get
\ben
\frac{d}{dt}\Bigr|_{t=t_0}A_t^* \Theta(\Omega)= A_{t_0}^* \frac{d}{dt}\Bigr|_{t=t_0}(A_t A_{t_0}^{-1})^*\Theta(\Omega) = A_{t_0}^* (\dot A_{t_0}A_{t_0}^{-1})^*\Theta(\Omega)
\ee
and hence
\ben
\frac{d^2}{dt^2}\Bigr|_{t=t_0}\Theta(\Omega_t)=\big((\dot A^*)^2 + \ddot A^* - (\dot A^2)^* \big)\Theta(\Omega).
\ee
In the same way we obtain
\be\label{omegadotdot}
\ddot \Omega = ( (\dot A^*)^2 + \ddot A^* - (\dot A^2)^*)\Omega.
\ee
Now
\begin{align*}
(\dot A^*)^2 \Theta(\Omega) &= \dot A^* ( \dot A^* \star_\Omega\Omega)\\
&= \dot A^* ( \tr \dot A \star_\Omega \Omega - \star_\Omega (\dot A^t)^*\Omega)\\
&=\tr \dot A \bigl( \tr \dot A \star_\Omega \Omega - \star_\Omega (\dot A^t)^*\Omega)\bigr) - \tr \dot A \star_\Omega (\dot A^t)^*\Omega + \star_\Omega(\dot A^t)^{*2} \Omega\\
&=\tr \dot A \star_\Omega (p_\Omega-s_\Omega) \dot \Omega + \star_\Omega(\dot A^t)^{*2} \Omega,
\end{align*}
where we have used $\tr \dot A \star_\Omega \Omega - \star_\Omega (\dot A^t)^*\Omega= \dot \Theta_\Omega$ and $(\dot A^t)^*\Omega = s_\Omega \dot \Omega$. Similarly,
\ben
\ddot A^*\Theta(\Omega) = \ddot A^* \star_\Omega\Omega = \tr \ddot A \star_\Omega \Omega - \star_\Omega (\ddot A^t)^* \Omega
\ee
and
\ben
- (\dot A^2)^* \Theta(\Omega) = -\tr \dot A^2 \star_\Omega \Omega + \star_\Omega(\dot A^2)^{t*}\Omega.
\ee
Finally, using~\eqref{omegadotdot}
\begin{align*}
 &\big((\dot A^*)^2 + \ddot A^* - (\dot A^2)^* \big)\Theta(\Omega) \\
 =&  \tr \dot A \star_\Omega (p_\Omega-s_\Omega) \dot \Omega  + (\tr \ddot A - \tr \dot A^2) \star_\Omega \Omega + 2\star_\Omega ((\dot A^t)^*)^2\Omega-\star_\Omega s_\Omega \ddot \Omega.
\end{align*}
Next we need to compute the expression $\tr(\ddot A - \dot A^2)$. By~\eqref{a1comp} $\tr\dot A = \tfrac{1}{3} g(\Omega, \dot \Omega)$ and similarly $\tr \ddot A = \tfrac{1}{3} g(\Omega, \ddot A^*\Omega)$. Write $(\ddot A - \dot A^2)^* \Omega = \ddot \Omega - (\dot A^*)^2 \Omega$. Then
\ben
\tr (\ddot A - \dot A^2) = \tfrac{1}{3} g( \Omega, (\ddot A - \dot A^2)^* \Omega) = \tfrac{1}{3} g (\Omega, \ddot \Omega - (\dot A^*)^2 \Omega).
\ee
Furthermore, 
\ben
\begin{array}{l}
\,[\dot A^*_{1} \dot \Omega]_1= \tfrac{1}{7} g (\dot A^*_{1} \dot \Omega, \Omega) \Omega = \tfrac{1}{7} g (\dot \Omega, \dot A^*_{1} \Omega) \Omega = \frac{1}{7} | [\dot \Omega]_{1} |^2\Omega\\[5pt]
\,[\dot A^*_7 \dot \Omega]_1 = \tfrac{1}{7} g (\dot A^*_7 \dot \Omega, \Omega) \Omega = - \tfrac{1}{7} g (\dot \Omega, \dot A^*_7 \Omega) \Omega = - \frac{1}{7} |[\dot \Omega]_7 |^2\Omega\\[5pt]
\,[\dot A^*_{27} \dot \Omega]_1= \tfrac{1}{7} g (\dot A^*_{27} \dot \Omega, \Omega) \Omega = \tfrac{1}{7} g (\dot \Omega, \dot A^*_{27} \Omega) \Omega = \frac{1}{7} | [\dot \Omega]_{27} |^2\Omega.
\end{array}
\ee
Hence
\begin{align*}
[\dot A^* \dot A^* \Omega ]_1 &= [\dot A^* \dot \Omega]_1\\
&= [\dot A^*_{1} \dot \Omega]_1 + [\dot A^*_7 \dot \Omega]_1 + [\dot A^*_{27} \dot \Omega]_1\\
&= \tfrac{1}{7} ( |[\dot \Omega]_1|^2 - |[\dot \Omega]_7|^2 + |[\dot \Omega]_{27}|^2) \Omega\\
&=\tfrac{1}{7}g( s_\Omega \dot \Omega, \dot \Omega)\Omega
\end{align*}
and in turn
\ben
\tr (\ddot A - \dot A^2) = \tfrac{1}{3} g (\Omega, \ddot \Omega) - \tfrac{1}{3} g (s_\Omega \dot \Omega, \dot \Omega),
\ee
which yields the assertion.
\end{proof}

\begin{prp}\label{secondvar}
Let $\Omega \in \Omega^3_+(M)$ be a nearly parallel $\Gt$-structure and define $r_\Omega(\dot\Omega):= (\id - p _\Omega)(\dot \Omega)$. Then
\begin{align*}
D_\Omega Q(\dot \Omega) =& -\delta_\Omega d \dot\Omega - p_\Omega d \delta_\Omega p_\Omega \dot \Omega - \tau_0(\star_\Omega d r_\Omega + r_\Omega\star_\Omega d) \dot \Omega\\
 &+ \tau_0^2 ( \tfrac{1}{18} [\dot \Omega]_1 + \tfrac{1}{6} [\dot \Omega]_7 -\tfrac{23}{6} [\dot \Omega]_{27})\\
=&-p_\Omega d(p_\Omega d)^*\dot\Omega-(\star_\Omega d+\tau_0r_\Omega)^2\dot\Omega+\tfrac{1}{6}\tau_0^2\dot\Omega
\end{align*}
for $\tau_0=\tau_0(\Omega)$ and $\dot \Omega \in \Omega^3(M)$.
\end{prp}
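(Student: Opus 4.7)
The plan is to linearise $Q(\Omega) = -\delta_\Omega d\Omega - p_\Omega(d\delta_\Omega\Omega) - q_\Omega(\nabla^\Omega\Omega)$ from~\eqref{qop} term by term, and then specialise to the nearly parallel base point using $\delta_\Omega\Omega = 0$ and $d\Omega = \tau_0\star_\Omega\Omega$. This direct route seems cleanest because several of the contributions in each variation vanish identically at such an $\Omega$, leaving only the pieces in which the Hodge star or the induced metric are being varied.

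For $-\delta_\Omega d\Omega$ one obtains $-\delta_\Omega d\dot\Omega-\dot\delta_\Omega(d\Omega)$; in the second summand, using $\delta=\pm\star d\star$ on the relevant degree together with~\eqref{thetader} and $d\Omega=\tau_0\star_\Omega\Omega$, one generates a term of shape $-\tau_0\star_\Omega d\,r_\Omega\dot\Omega$ together with pointwise $\tau_0^2\dot\Omega$ contributions. For $-p_\Omega(d\delta_\Omega\Omega)$, since $\delta_\Omega\Omega$ vanishes identically at the base point, only the first variation of $\delta_\Omega\Omega$ contributes and, after rewriting via the self-adjointness of $p_\Omega$, yields $-p_\Omega d\delta_\Omega p_\Omega\dot\Omega=-p_\Omega d(p_\Omega d)^\ast\dot\Omega$ together with an additional $-\tau_0 r_\Omega\star_\Omega d\dot\Omega$ summand. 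For the quadratic piece $-q_\Omega(\nabla^\Omega\Omega)$, the defining identity~\eqref{quadratic} collapses to its first summand because $d\star_\Omega\Omega=0$ at $\Omega$; differentiating $\tfrac{1}{2}\tau_0^2\langle\dot\star_\Omega\star_\Omega\Omega,\Omega\rangle_\Omega$ and invoking Lemma~\ref{corddotTheta} to handle $\ddot\Theta_\Omega$ produces the remaining $\tau_0^2\dot\Omega$ pieces, broken up irreducibly.

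Assembling all contributions yields the first displayed formula. For the equivalence with the second form, use $(p_\Omega d)^\ast=\delta_\Omega p_\Omega$ and the identity $(\star_\Omega d)^2=\delta_\Omega d$ on $3$-forms in dimension $7$ (which follows from $\delta=\star d\star$ on $4$-forms), so that
\ben
-(\star_\Omega d+\tau_0 r_\Omega)^2\dot\Omega=-\delta_\Omega d\dot\Omega-\tau_0(\star_\Omega d\,r_\Omega+r_\Omega\star_\Omega d)\dot\Omega-\tau_0^2 r_\Omega^2\dot\Omega.
\ee
The claim then reduces to the pointwise identity $-r_\Omega^2\dot\Omega+\tfrac{1}{6}\dot\Omega=\tfrac{1}{18}[\dot\Omega]_1+\tfrac{1}{6}[\dot\Omega]_7-\tfrac{23}{6}[\dot\Omega]_{27}$, which holds because the eigenvalues of $r_\Omega=\id-p_\Omega$ on the summands $[\dot\Omega]_1,[\dot\Omega]_7,[\dot\Omega]_{27}$ are $-1/3$, $0$, $2$ respectively, whence $r_\Omega^2$ has eigenvalues $1/9,0,4$, and $-1/9+1/6=1/18$, $0+1/6=1/6$, $-4+1/6=-23/6$.

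The main obstacle is the correct bookkeeping of the pointwise $\tau_0^2$ contributions, which arise from three sources: the second variation of $\star_\Omega$ acting on $\Omega$ (handled by Lemma~\ref{corddotTheta}), cross terms of the shape $\dot\delta_\Omega\star_\Omega\Omega$ in the linearisation of $-\delta_\Omega d\Omega$, and the $\dot\star_\Omega$-piece inside the defining identity~\eqref{quadratic} for $q_\Omega$. One must verify that, after decomposing into $[\dot\Omega]_1$, $[\dot\Omega]_7$, $[\dot\Omega]_{27}$, the three separate contributions recombine precisely to the coefficients $\tfrac{1}{18}$, $\tfrac{1}{6}$, $-\tfrac{23}{6}$ stated in the first form of the linearisation.
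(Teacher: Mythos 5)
Your overall strategy---linearising \eqref{qop} term by term at the nearly parallel base point and then passing between the two displayed forms via the eigenvalues $-\tfrac13,\,0,\,2$ of $r_\Omega$ on $[\dot\Omega]_1,[\dot\Omega]_7,[\dot\Omega]_{27}$---is the same as the paper's, and that eigenvalue reconciliation (using $(\star_\Omega d)^2=\delta_\Omega d$ on $3$-forms and $(p_\Omega d)^*=\delta_\Omega p_\Omega$) is correct. The genuine problem is your bookkeeping of the first-order term $-\tau_0\, r_\Omega\star_\Omega d\dot\Omega$. At a nearly parallel structure $\delta_\Omega\Omega=0$ and $d\star_\Omega\Omega=0$, so the linearisation of $\Omega\mapsto -p_\Omega(d\delta_\Omega\Omega)$ is \emph{exactly} $-p_\Omega d\delta_\Omega p_\Omega\dot\Omega$: the terms involving $\dot p_\Omega$ and $\dot\star_\Omega$ are multiplied by $d\delta_\Omega\Omega=0$ resp.\ $d\star_\Omega\Omega=0$, and the only surviving piece is $-p_\Omega d\star_\Omega d\,\dot\Theta_\Omega$ with $\dot\Theta_\Omega=\star_\Omega p_\Omega\dot\Omega$; there is no additional $-\tau_0\,r_\Omega\star_\Omega d\dot\Omega$ summand from this source. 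That summand in fact comes from the quadratic term: differentiating the defining identity $q_\Omega(d\Omega)\wedge\star_\Omega\Omega'=\tfrac12(\star_\Omega' d\Omega)\wedge d\Omega$ produces, besides the second-variation-of-$\star$ contributions, the cross term $\star_\Omega' d\Omega\wedge d\dot\Omega=\tau_0\,r_\Omega\star_\Omega d\dot\Omega\wedge\star_\Omega\Omega'$. Your treatment of the quadratic piece misses it because you substitute the relations $d\Omega=\tau_0\star_\Omega\Omega$, $d\star_\Omega\Omega=0$ into \eqref{quadratic} \emph{before} differentiating: the collapsed expression $\tfrac12\tau_0^2\langle\dot\star_\Omega\star_\Omega\Omega,\Omega\rangle_\Omega$ agrees with the full one only on the nearly parallel locus, so differentiating it in directions transverse to that locus loses exactly the contributions proportional to $d\dot\Omega$. (The analogous collapse \emph{is} legitimate for the $\delta_\Omega\Omega$-part, since that summand is quadratic in the vanishing quantity $d\star_\Omega\Omega$.) Executed as written, your plan would thus produce a formula missing $-\tau_0\,r_\Omega\star_\Omega d\dot\Omega$ from its correct source while inserting it by hand in a place where a direct computation shows it does not arise; the two errors would not self-correct but surface as an inconsistency.

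Beyond this, the zeroth-order coefficients $\tfrac1{18},\,\tfrac16,\,-\tfrac{23}{6}$, which are the actual content of the first formula, are left unverified in your proposal. In the paper they arise as the sum of $-\tau_0^2 r_\Omega\dot\Omega$ (from linearising $-\delta_\Omega d\Omega$, a term you only describe loosely as ``pointwise $\tau_0^2\dot\Omega$ contributions'') and $-\tau_0^2\bigl(\tfrac5{18}[\dot\Omega]_1-\tfrac16[\dot\Omega]_7+\tfrac{11}6[\dot\Omega]_{27}\bigr)$ from the quadratic term; the latter requires the full second variation of $\star_\Omega\Omega$ from Lemma~\ref{corddotTheta} together with identities such as $[\dot A^*\dot A^*\Omega]_1=\tfrac17 g(s_\Omega\dot\Omega,\dot\Omega)\,\Omega$ and the wedge identities listed in the paper's proof. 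This is a substantial computation rather than a routine check, so deferring it (as your closing paragraph does) leaves the core of the statement unproven.
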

\begin{proof}
We compute the linearisation by starting from equations \eqref{qop} and \eqref{quadratic}. First, 
\begin{align*}
D_\Omega(\Omega \mapsto \delta_\Omega d \Omega)(\dot \Omega) &= \dot\star_\Omega d \star_\Omega d \Omega + \star_\Omega d \dot\star_\Omega d \Omega + \star_\Omega d \star_\Omega d \dot \Omega\\
&= \tau_0^2 \dot\star_\Omega \star_\Omega \Omega + \tau_0 \star_\Omega d \dot\star_\Omega \star_\Omega \Omega+ \star_\Omega d \star_\Omega d \dot \Omega\\
&=\tau_0^2 r_\Omega \dot \Omega + \tau_0 \star_\Omega d r_\Omega \dot \Omega + \delta_\Omega d \dot \Omega.
\end{align*}
Second,
\begin{align*}
D_\Omega(\Omega \mapsto p_\Omega d \delta_\Omega\Omega)(\dot \Omega) &= - \dot p_\Omega (d \star_\Omega d \star_\Omega\!\Omega) - p_\Omega(d \dot \star_\Omega d \star_\Omega\Omega) - p_\Omega(d \star_\Omega d \dot\Theta_\Omega)\\
&=p_\Omega d \delta_\Omega p_\Omega \dot \Omega.
\end{align*}
Third we note that $q_\Omega(\nabla^\Omega) = q_\Omega(d \Omega) + q_\Omega(\delta_\Omega \Omega)$, where $q_\Omega(d\Omega)$ and $q_\Omega(\delta_\Omega\Omega)$ are determined by the identities
\be\label{qdom}
q_\Omega(d\Omega) \wedge \star_\Omega \Omega'=\tfrac{1}{2}(\star_\Omega'd \Omega)\wedge d\Omega
\ee
and
\be\label{qdeltom}
q_\Omega(\delta_\Omega\Omega) \wedge \star_\Omega \Omega'=\tfrac{1}{2}(\star_\Omega' d \star_\Omega \Omega)\wedge d \star_\Omega \Omega
\ee
(with $\star'_\Omega=D_\Omega(\Omega\mapsto\star_\Omega)(\Omega')$) valid for all $\Omega' \in \Omega^3(M)$. It follows that $q_\Omega(d\Omega) = - \tfrac{1}{6}\tau_0^2 \Omega$. Indeed, the left hand side of~\eqref{qdom} is twice $\tau_0^2\star'_\Omega\Theta(\Omega)\wedge\Theta(\Omega)$. Now $\Omega=\star_\Omega\Theta(\Omega)$ so that $\Omega'=\star'_\Omega\Theta(\Omega)+\star_\Omega\Theta'_\Omega$. Hence, $[\star'_\Omega\Theta(\Omega)]_1=-[\Omega']_1/3$ which is the only component which survives wedging by $\Theta(\Omega)$. Differentiating equation~\eqref{qdom} therefore implies
\begin{align*}
&D_\Omega(\Omega \mapsto q_\Omega(d\Omega))(\dot \Omega)\wedge \star_\Omega \Omega'\\ 
=& \tfrac{1}{2} (D_\Omega^2 \star)(\dot \Omega, \Omega') d\Omega \wedge d \Omega + \star_\Omega' d \Omega \wedge d \dot \Omega - q_\Omega(d\Omega) \wedge \dot\star_\Omega \Omega'\\
=& \tfrac{1}{2} \tau_0^2 (D_\Omega^2 \star)(\dot \Omega, \Omega') \star_\Omega\Omega \wedge \star_\Omega \Omega + \tau_0 \star_\Omega' \star_\Omega \Omega \wedge d \dot \Omega - \tfrac{1}{6} \tau_0^2 r_\Omega \dot \Omega \wedge \star_\Omega \Omega'.
\end{align*}
On the other hand, differentiating the equation $\Omega = \star_\Omega \Theta(\Omega)$ gives
$\ddot \Omega = \ddot \star_\Omega \Theta(\Omega) + 2 \dot \star_\Omega \dot \Theta_\Omega + \star_\Omega \ddot \Theta_\Omega$. Without loss of generality we may assume that $\Omega_t = (1+t) \Omega$, so in particular $\ddot \Omega = 0$ and hence $\ddot \star_\Omega \Theta_\Omega = -2 \dot \star_\Omega - \star_\Omega \ddot \Theta_\Omega$. From Lemma~\ref{corddotTheta} we deduce
\begin{align*}
&\tfrac{1}{2} \tau_0^2 (D_\Omega^2 \star)(\dot \Omega, \Omega') \star_\Omega\Omega \wedge \star_\Omega \Omega\\
 =& \tau_0^2 \bigl( - \dot \star_\Omega \dot\Theta_\Omega-\tfrac{1}{6} g_\Omega(\Omega,\dot \Omega)(p_\Omega-s_\Omega)\dot \Omega - \big((\dot A^t)^{*2} \Omega + \tfrac{1}{6} g_\Omega(s_\Omega\dot\Omega,\dot\Omega)\Omega\big) \wedge \star_\Omega \Omega'\bigr).
\end{align*} 
Furthermore, the identities
\ben
\begin{array}{l}
-((\dot A^t)^*)^2 \Omega \wedge \star_\Omega \Omega = - (\dot A^t)^*\Omega \wedge \star_\Omega \dot A^*\Omega = - s_\Omega \dot \Omega \wedge \star_\Omega \dot \Omega\\[5pt] -\dot\star_\Omega\dot\Theta_\Omega \wedge \star_\Omega\Omega = - r_\Omega p_\Omega \dot \Omega \wedge \star_\Omega \dot \Omega\\[5pt]
-\tfrac{1}{6} g_\Omega(\Omega, \dot \Omega)(p_\Omega-s_\Omega)\dot \Omega \wedge \star_\Omega \Omega = -\tfrac{7}{18} [\dot \Omega]_1 \wedge \star_\Omega \dot \Omega\\[5pt]
\tfrac{1}{6}g_\Omega (s_\Omega\dot\Omega,\dot \Omega)\Omega \wedge \star_\Omega \Omega = \tfrac{7}{6} s_\Omega \dot \Omega \wedge \star_\Omega \dot \Omega
\end{array}
\ee
imply
\ben
\tfrac{1}{2} \tau_0^2 (D_\Omega^2 \star)(\dot \Omega, \Omega') \star_\Omega\Omega \wedge \star_\Omega \Omega\\
= \tau_0^2 (\tfrac{2}{9} [\dot \Omega]_1 -\tfrac{1}{6} [\dot \Omega]_7 + \tfrac{13}{6} [\dot \Omega]_{27}) \wedge \star_\Omega \Omega'.
\ee
Hence, using
\ben
\tau_0\star_\Omega' \star_\Omega \wedge d \dot \Omega = - \tau_0\star_\Omega \star_\Omega' \Omega \wedge d \dot \Omega = \tau_0r_\Omega \Omega' \wedge d \dot \Omega = \tau_0r_\Omega \star_\Omega d \dot \Omega \wedge \star_\Omega \Omega'
\ee
we arrive at
\begin{align*}
&D_\Omega(\Omega \mapsto q_\Omega(d\Omega))(\dot \Omega)\wedge \star_\Omega \Omega'\\ 
=&  \tau_0^2 (\tfrac{2}{9} [\dot \Omega]_1 -\tfrac{1}{6} [\dot \Omega]_7 + \tfrac{13}{6} [\dot \Omega]_{27}) \wedge \star_\Omega \Omega'
+ \tau_0r_\Omega \star_\Omega d \dot \Omega \wedge \star_\Omega\Omega' - \tfrac{1}{6} \tau_0^2 r_\Omega \dot \Omega\wedge \star_\Omega \Omega'\\
=& \tau_0^2(\tfrac{5}{18}[\dot\Omega]_1 - \tfrac{1}{6} [\dot \Omega]_7 +\tfrac{11}{6}[\dot\Omega]_{27}) \wedge \star_\Omega \Omega'+\tau_0r_\Omega \star_\Omega d \dot \Omega \wedge \star_\Omega\Omega'.
\end{align*}
Similarly, differentiating equation \eqref{qdeltom} we get
\begin{align*}
&D_\Omega(\Omega \mapsto q_\Omega(\delta_\Omega\Omega))(\dot \Omega)\wedge \star_\Omega \Omega'\\ 
=& \tfrac{1}{2} (D_\Omega^2 \star)(\dot \Omega, \Omega') d \Theta(\Omega)\wedge d \Theta(\Omega) + \star_\Omega' d \Theta(\Omega) \wedge d \dot \Theta_\Omega - q_\Omega(\delta_\Omega\Omega) \wedge \dot\star_\Omega \Omega'\\
=&0,
\end{align*}
for $d\Theta(\Omega) = q_\Omega(\delta_\Omega\Omega) =0$. Hence,
\begin{align*}
D_\Omega(\Omega \mapsto q_\Omega(\nabla^\Omega))(\dot\Omega) &= D_\Omega(\Omega \mapsto q_\Omega(d\Omega))(\dot \Omega)\\
&=\tau_0r_\Omega \star_\Omega d \dot \Omega + \tau_0^2(\tfrac{5}{18}[\dot\Omega]_1 - \tfrac{1}{6} [\dot \Omega]_7 +\tfrac{11}{6}[\dot\Omega]_{27}). 
\end{align*}
Summing up we obtain
\begin{align*}
(D_\Omega Q)(\dot\Omega) =&  - \delta_\Omega d \dot \Omega - p_\Omega d \delta_\Omega p_\Omega \dot \Omega - \tau_0\star_\Omega d r_\Omega \dot \Omega - \tau_0^2 r_\Omega \dot \Omega\\
& -\tau_0r_\Omega \star_\Omega d \dot \Omega - \tau_0^2(\tfrac{5}{18}[\dot\Omega]_1 - \tfrac{1}{6} [\dot \Omega]_7 +\tfrac{11}{6}[\dot\Omega]_{27})\\ 
=&  - \delta_\Omega d \dot \Omega - p_\Omega d \delta_\Omega p_\Omega \dot \Omega -  \tau_0 (\star_\Omega d r_\Omega + r_\Omega \star_\Omega d)\dot \Omega\\
& + \tau_0^2 ( \tfrac{1}{18} [ \dot \Omega]_1 + \tfrac{1}{6} [\dot\Omega]_7 - \tfrac{23}{6} [\dot \Omega]_{27}),
\end{align*}
which is the desired result.
\end{proof}

\begin{rmk}
In particular, we see that $D_\Omega Q(\Omega)=-\tfrac{5}{18}\tau_0^2\Omega$ which, of course, follows directly from differentiating $Q((1+t)\Omega)=(1+t)^{1/3}Q(\Omega)$ at $t=0$ (cf.\ Lemma~\ref{Q-homog}). 
\end{rmk}

As a corollary to Proposition~\ref{secondvar}, we immediately obtain the linearisation of the operator $S_{\bar \Omega}$ at $\bar\Omega$:

\begin{cor}
Let $\bar \Omega \in \Omega^3_+(M)$ be a nearly parallel $\Gt$-structure. Then
\ben
D_{\bar \Omega} S_{\bar\Omega}(\dot \Omega)=-p_{\bar \Omega}d(p_{\bar \Omega}d)^*\dot\Omega-(\star_{\bar \Omega} d + \bar\tau_0 r_{\bar \Omega})^2\dot\Omega+\bar\tau_0^2\dot\Omega
\ee
for $\bar\tau_0=\tau_0(\bar\Omega)$ and $\dot \Omega \in \Omega^3(M)$.
\end{cor}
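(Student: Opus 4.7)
The plan is to read off the linearisation directly from Proposition~\ref{secondvar}. Observe that the soliton operator $S_{\bar\Omega}(\Omega)=Q(\Omega)+\tfrac{5}{6}\bar\tau_0^2\,\Omega$ depends on $\Omega$ only through $Q$ and through the linear factor $\Omega$, since $\bar\tau_0=\tau_0(\bar\Omega)$ is a fixed constant (the background $\Gt$-structure $\bar\Omega$ is held frozen when writing $S_{\bar\Omega}$, cf.\ the definition in~\eqref{solitoneq}). Consequently the chain rule yields
\be\nonumber
D_{\bar\Omega} S_{\bar\Omega}(\dot\Omega)=D_{\bar\Omega}Q(\dot\Omega)+\tfrac{5}{6}\bar\tau_0^2\,\dot\Omega.
\ee

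Next I would invoke the second form of the linearisation given by Proposition~\ref{secondvar}, which applies because $\bar\Omega$ is nearly parallel: substituting $\Omega=\bar\Omega$ and $\tau_0=\bar\tau_0$,
\be\nonumber
D_{\bar\Omega}Q(\dot\Omega)=-p_{\bar\Omega}d(p_{\bar\Omega}d)^{\ast}\dot\Omega-(\star_{\bar\Omega}d+\bar\tau_0 r_{\bar\Omega})^2\dot\Omega+\tfrac{1}{6}\bar\tau_0^2\,\dot\Omega.
\ee
Adding the contribution $\tfrac{5}{6}\bar\tau_0^2\dot\Omega$ from the lower-order term of $S_{\bar\Omega}$ and combining the coefficients $\tfrac{1}{6}+\tfrac{5}{6}=1$ produces the claimed formula.

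In short, the corollary is nothing more than Proposition~\ref{secondvar} with the constant shift absorbed, and no further computation is required. The only point worth checking is that $\bar\tau_0$ is indeed treated as constant when differentiating $S_{\bar\Omega}$, so that the shift $\tfrac{5}{6}\bar\tau_0^2\Omega$ differentiates to $\tfrac{5}{6}\bar\tau_0^2\dot\Omega$ without any extra contribution coming from differentiating $\tau_0(\Omega)$; this is transparent from the definition. The real work has already been done in proving Proposition~\ref{secondvar}, where the delicate second variation of $\star_\Omega$ encoded in Lemma~\ref{corddotTheta} and the precise distribution of $\tau_0^2$-coefficients among the $\Gt$-irreducible components $[\dot\Omega]_1$, $[\dot\Omega]_7$, $[\dot\Omega]_{27}$ had to be tracked.
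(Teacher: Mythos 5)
Your proposal is correct and is exactly the paper's route: the corollary is stated as an immediate consequence of Proposition~\ref{secondvar}, obtained by adding the linearisation $\tfrac{5}{6}\bar\tau_0^2\dot\Omega$ of the constant-coefficient term to $D_{\bar\Omega}Q(\dot\Omega)$ so that $\tfrac{1}{6}\bar\tau_0^2+\tfrac{5}{6}\bar\tau_0^2=\bar\tau_0^2$. Your remark that $\bar\tau_0=\tau_0(\bar\Omega)$ is frozen, hence contributes no extra variation, is precisely the only point that needs checking.
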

%
\subsection{The premoduli space}
As above, let $\bar \Omega \in \Omega^3_+(M)$ be a fixed nearly parallel $\Gt$-structure on $M$. We wish to study the space of $\Gt$-soliton deformations of $\bar \Omega$, i.e.~solutions $\Omega \in \Omega^3_+(M)$ to the soliton equation \eqref{solitoneq} close to $\bar \Omega$ modulo the action of diffeomorphisms.
Towards that end, we first investigate the linear equation $D_{\bar\Omega}S_{\bar \Omega}(\dot\Omega)=0$. As this parallels the corresponding theory for the Einstein premoduli space as developed by Koiso, we follow~\cite{be87} and~\cite{bo80} and only sketch the main points. Recall the $L^2$-orthogonal decomposition 
\ben
\Omega^3(M) = \im\lambda_{\bar \Omega}^\ast\oplus \ker \lambda_{\bar \Omega}.
\ee
given in~\eqref{tsdecomp}. By Ebin's slice theorem~\cite{eb68}, $\ker \lambda_{\bar \Omega} = T_{\bar\Omega} \mc S_{\bar \Omega}$ integrates to a slice $\mc S_{\bar \Omega}$ for the $\diff_0(M)$-action. Hence the space $\sigma(\bar\Omega)$ of {\em infinitesimal soliton deformations} of $\bar\Omega$ consists of $\dot\Omega\in\Omega^3(M)$ satisfying the equations
\ben
D_{\bar \Omega}S_{\bar\Omega}(\dot \Omega) =0 \quad \text{and} \quad \lambda_{\bar \Omega}(\dot \Omega) = 0.
\ee
The {\em premoduli space} $\mc M(\bar\Omega)$ of $\Gt$-soliton deformations at $\bar\Omega$ is the set of $\Gt$-solitons in the slice $\mc S_{\bar \Omega}$ near $\bar\Omega$. To investigate the structure of $\sigma(\bar\Omega)$ and $\mc M(\bar\Omega)$ further we introduce the linear operator
\ben
P_{\bar \Omega}: \Omega^3(M)\to\Omega^3(M)\,,\; P_{\bar \Omega}(\dot\Omega):= D_{\bar \Omega} S_{\bar\Omega}(\dot \Omega)-\lambda_{\bar\Omega}^*\lambda_{\bar \Omega}(\dot\Omega),
\ee
which is clearly symmetric.

\begin{lem}
The operator $P_{\bar \Omega}$ is elliptic. 
\end{lem}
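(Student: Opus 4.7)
The plan is to reduce ellipticity of $P_{\bar\Omega}$ to the ellipticity of the DeTurck-modified operator $\widetilde Q$ that appeared in the short-time existence proof. The point is that the two operators have the same second-order principal symbol, so the symbol bound established for $\widetilde Q$ carries over directly.

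Concretely, I would proceed in three steps. First, I would note that $S_{\bar\Omega}=Q+\tfrac{5}{6}\bar\tau_0^2\id$ modifies $Q$ only by a zeroth-order multiple of the identity, so that $D_{\bar\Omega}S_{\bar\Omega}$ and $D_{\bar\Omega}Q$ share the same second-order principal symbol, and hence
\[
\sigma_2(P_{\bar\Omega})(x,\xi)=\sigma_2(D_{\bar\Omega}Q)(x,\xi)-\sigma_2(\lambda_{\bar\Omega}^*\lambda_{\bar\Omega})(x,\xi).
\]
Second, I would recall from the proof of Theorem~\ref{nuste} the DeTurck-modified operator $\widetilde Q(\Omega):=Q(\Omega)+\lambda_\Omega^*(X_{\bar\Omega}(\Omega))$, where $X_{\bar\Omega}(\Omega)=-\delta_{\bar\Omega}\Omega\llcorner\bar\Omega$. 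Since $\bar\Omega$ is nearly parallel, and in particular coclosed, one has $X_{\bar\Omega}(\bar\Omega)=0$, so that linearising at $\bar\Omega$ yields
\[
D_{\bar\Omega}\widetilde Q(\dot\Omega)=D_{\bar\Omega}Q(\dot\Omega)+\lambda_{\bar\Omega}^*(X_{\bar\Omega}(\dot\Omega)).
\]
Using the identity $\lambda_{\bar\Omega}(\dot\Omega)=-X_{\bar\Omega}(\dot\Omega)-\dot\Omega\llcorner d\bar\Omega$, we see that $X_{\bar\Omega}(\dot\Omega)$ differs from $-\lambda_{\bar\Omega}(\dot\Omega)$ only by a zeroth-order term in $\dot\Omega$. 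Composing with the first-order operator $\lambda_{\bar\Omega}^*$ produces an at most first-order error, which is invisible at the second-order principal symbol level; consequently $\sigma_2(D_{\bar\Omega}\widetilde Q)=\sigma_2(D_{\bar\Omega}Q)-\sigma_2(\lambda_{\bar\Omega}^*\lambda_{\bar\Omega})=\sigma_2(P_{\bar\Omega})$. Third, Lemma~5.7 in \cite{wewi10} gives that $D_{\bar\Omega}\widetilde Q$ is strongly elliptic, and thus $P_{\bar\Omega}$ inherits the same property.

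The main obstacle is the bookkeeping in step two: one has to check that every term which distinguishes $D_{\bar\Omega}\widetilde Q$ from $D_{\bar\Omega}S_{\bar\Omega}-\lambda_{\bar\Omega}^*\lambda_{\bar\Omega}$ is truly of order at most one. If this comparison turns out to be unsatisfactory, a self-contained alternative is available: one computes $\sigma_2(D_{\bar\Omega}Q)$ directly from Proposition~\ref{secondvar}, uses the Bianchi-type identity of Lemma~\ref{Bianchi} together with the diffeomorphism-equivariance~\eqref{equiv} of $Q$ to show that $\sigma_2(D_{\bar\Omega}Q)(x,\xi)$ is definite on $\ker\sigma(\lambda_{\bar\Omega})(x,\xi)$, while $-\sigma_2(\lambda_{\bar\Omega}^*\lambda_{\bar\Omega})(x,\xi)$ is definite on the complementary subspace $\im\sigma(\lambda_{\bar\Omega}^*)(x,\xi)$, and then concludes that their sum $\sigma_2(P_{\bar\Omega})(x,\xi)$ is an isomorphism for all $\xi\neq 0$.
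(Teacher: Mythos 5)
Your proposal is correct and follows essentially the same route as the paper: the paper's proof likewise observes that $P_{\bar\Omega}$ differs from the linearised Dirichlet--DeTurck operator only in lower-order terms (your step two makes this bookkeeping explicit, using $\delta_{\bar\Omega}\bar\Omega=0$ and the zeroth-order term $\dot\Omega\llcorner d\bar\Omega$) and then invokes Lemma 5.7 of \cite{wewi10} for the symbol bound. The self-contained alternative you sketch is not needed.
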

\begin{proof}
The operator $P_{\bar\Omega}$ differs from the linearisation of the Dirichlet-DeTurck operator only in the lower order terms, cf.~in particular equation (32) in \cite{wewi10}. Hence it has the same symbol and the claim follows from Lemma 5.7 in \cite{wewi10}.
\end{proof}

Since any infinitesimal soliton deformation of $\bar\Omega$ lies in the kernel of $P_{\bar\Omega}$ we immediately conclude from ellipticity:

\begin{cor}
The space $\sigma(\bar\Omega)$ is finite dimensional.
\end{cor}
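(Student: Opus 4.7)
The plan is straightforward and proceeds by showing that $\sigma(\bar\Omega)$ sits inside the kernel of the elliptic operator $P_{\bar\Omega}$, which is automatically finite dimensional by standard elliptic theory on a compact manifold.

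More concretely, the first step is to verify the inclusion $\sigma(\bar\Omega) \subseteq \ker P_{\bar\Omega}$. Given $\dot\Omega \in \sigma(\bar\Omega)$, by definition we have both $D_{\bar\Omega} S_{\bar\Omega}(\dot\Omega)=0$ and $\lambda_{\bar\Omega}(\dot\Omega)=0$; the latter forces $\lambda_{\bar\Omega}^*\lambda_{\bar\Omega}(\dot\Omega)=0$, and therefore
\[
P_{\bar\Omega}(\dot\Omega) = D_{\bar\Omega}S_{\bar\Omega}(\dot\Omega) - \lambda_{\bar\Omega}^*\lambda_{\bar\Omega}(\dot\Omega)=0.
\]
Hence $\sigma(\bar\Omega) \subseteq \ker P_{\bar\Omega}$.

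The second step invokes the preceding lemma: since $P_{\bar\Omega}$ is a linear elliptic operator acting on sections of the finite-rank bundle $\Lambda^3 T^*M$ over the compact manifold $M$, its kernel is a finite-dimensional subspace of $\Omega^3(M)$. Combining this with the inclusion from the first step yields $\dim \sigma(\bar\Omega) \leq \dim \ker P_{\bar\Omega} < \infty$, which is the claim. There is no genuine obstacle here: the ellipticity has already been established, and the only input needed is the standard finite-dimensionality of kernels of elliptic operators on compact manifolds (together with the observation that the diffeomorphism-gauge condition $\lambda_{\bar\Omega}(\dot\Omega)=0$ combines with the linearised soliton equation to produce precisely an element of $\ker P_{\bar\Omega}$).
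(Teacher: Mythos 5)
Your argument is correct and coincides with the paper's: elements of $\sigma(\bar\Omega)$ satisfy both $D_{\bar\Omega}S_{\bar\Omega}(\dot\Omega)=0$ and $\lambda_{\bar\Omega}(\dot\Omega)=0$, hence lie in $\ker P_{\bar\Omega}$, which is finite dimensional since $P_{\bar\Omega}$ was shown to be elliptic in the preceding lemma. Nothing further is needed.
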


To discuss the structure of the premoduli space we first prove the following

\begin{lem}
The restricted linear operator $D_{\bar\Omega}S_{\bar \Omega}:T_{\bar\Omega}\mc S_{\bar \Omega}\to\Omega^3(M)$ has closed\footnote{Here and thereafter, this refers to the natural extension of $D_{\bar\Omega}S_{\bar\Omega}$ to Sobolev- or H\"older-spaces.} image.
\end{lem}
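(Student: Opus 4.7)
The plan is to reduce the statement to a standard Fredholm argument by combining the Bianchi-type identity of Lemma~\ref{Bianchi} with the ellipticity and formal self-adjointness of the modified operator $P_{\bar\Omega}$. The first step is to show that $\im D_{\bar\Omega}S_{\bar\Omega} \subseteq T_{\bar\Omega}\mc S_{\bar\Omega} = \ker\lambda_{\bar\Omega}$. Since $\lambda_\Omega Q(\Omega)=0$ and $\lambda_\Omega\Omega=0$ by Lemma~\ref{Bianchi}, the map $\Omega \mapsto \lambda_\Omega S_{\bar\Omega}(\Omega)$ vanishes identically on $\Omega^3_+(M)$. Linearising at $\bar\Omega$ and using that $S_{\bar\Omega}(\bar\Omega)=0$ (which holds by Theorem~\ref{weakg2sol} for the choice $\mu_0=-\tfrac{5}{6}\bar\tau_0^2$), the only surviving term is $\lambda_{\bar\Omega}(D_{\bar\Omega}S_{\bar\Omega}(\dot\Omega))$, which must therefore vanish. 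On the slice $\ker\lambda_{\bar\Omega}$ the operator $\lambda_{\bar\Omega}^*\lambda_{\bar\Omega}$ vanishes identically, so $P_{\bar\Omega}$ and $D_{\bar\Omega}S_{\bar\Omega}$ agree there.

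Next, I would note that $S_{\bar\Omega}=-\grad(\mc D-\tfrac{5}{2}\bar\tau_0^2 \mc H)$, using $\grad\mc H(\Omega)=\tfrac{1}{3}\Omega$ from the proof of Lemma~\ref{Bianchi}. Since $\bar\Omega$ is a critical point of this combined functional, its Hessian at $\bar\Omega$ is formally self-adjoint with respect to $g_{\bar\Omega}$; this is precisely the operator $D_{\bar\Omega}S_{\bar\Omega}$. Together with the manifest self-adjointness of $\lambda_{\bar\Omega}^*\lambda_{\bar\Omega}$, this makes $P_{\bar\Omega}$ formally self-adjoint. By the preceding lemma it is elliptic of order two, so after passing to suitable Sobolev completions it becomes a self-adjoint Fredholm operator on $\Omega^3(M)$, and its image is closed.

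Finally, the $L^2$-orthogonal decomposition~\eqref{tsdecomp} is preserved by $P_{\bar\Omega}$: the first summand $\ker\lambda_{\bar\Omega}$ is $P_{\bar\Omega}$-invariant by the first step together with the vanishing of $\lambda_{\bar\Omega}^*\lambda_{\bar\Omega}$ on the slice, and the second summand $\im\lambda_{\bar\Omega}^*$ is then invariant by self-adjointness. The restriction $P_{\bar\Omega}|_{\ker\lambda_{\bar\Omega}}\colon\ker\lambda_{\bar\Omega}\to\ker\lambda_{\bar\Omega}$ is therefore itself Fredholm with closed image, and since $\ker\lambda_{\bar\Omega}$ is a closed subspace of $\Omega^3(M)$, its image is also closed in $\Omega^3(M)$. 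As this restriction coincides with $D_{\bar\Omega}S_{\bar\Omega}|_{T_{\bar\Omega}\mc S_{\bar\Omega}}$, the statement follows.

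The step I expect to require the most care is establishing genuine self-adjointness of $D_{\bar\Omega}S_{\bar\Omega}$. The subtlety is that the $L^2$-gradient depends on $g_\Omega$ which itself depends on $\Omega$, so at a point which is not a critical point of the underlying functional one picks up asymmetric contributions from the variation of the metric. The observation that $\bar\Omega$ is critical for the auxiliary functional $\mc D-\tfrac{5}{2}\bar\tau_0^2\mc H$, rather than for $\mc D$ itself, is exactly what makes those contributions drop out; without this reformulation the self-adjointness needed to split the image along the slice decomposition would be unclear.
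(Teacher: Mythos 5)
Your Steps 1 and 2 are fine: the identity $\lambda_{\bar\Omega}\circ D_{\bar\Omega}S_{\bar\Omega}=0$, obtained by linearising the Bianchi identity of Lemma~\ref{Bianchi} and using $S_{\bar\Omega}(\bar\Omega)=0$, is exactly the (implicit) source of the paper's relation between $\lambda_{\bar\Omega}\circ P_{\bar\Omega}$ and $\lambda_{\bar\Omega}\lambda_{\bar\Omega}^*\circ\lambda_{\bar\Omega}$, and your observation that $D_{\bar\Omega}S_{\bar\Omega}$ is the Hessian of $\mc D-\tfrac{5}{2}\bar\tau_0^2\mc H$ at a critical point is a clean justification of the symmetry of $P_{\bar\Omega}$, which the paper merely asserts. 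The gap is in the final step. By the footnote, the lemma is a statement about the Sobolev/H\"older extensions, so your block-diagonal argument must be run on the completions: you need \eqref{tsdecomp} as a topological splitting at each regularity, i.e.\ that $\ker\lambda_{\bar\Omega}$ and (the closure of) $\im\lambda_{\bar\Omega}^*$ are closed complementary subspaces of $W^{s,2}$ preserved by $P_{\bar\Omega}$. The paper states \eqref{tsdecomp} only for smooth forms, and extending it to the completions is precisely where the nontrivial analytic input lies: one needs the closed-range property of $\lambda_{\bar\Omega}^*$, equivalently the ellipticity of $L_{\bar\Omega}=\lambda_{\bar\Omega}\lambda_{\bar\Omega}^*$, which the paper establishes through the representation-theoretic symbol computation and which your proposal never addresses. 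Likewise, "invariance of $\im\lambda_{\bar\Omega}^*$ by self-adjointness" only uses the formal $L^2$-pairing on smooth sections and needs the completed splitting to be meaningful (and in H\"older spaces there is no inner product at all). As written, your last step assumes exactly what the paper's proof is designed to supply.

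The good news is that your own ingredients already suffice, and the invariant-complement detour can be dropped. Since $P_{\bar\Omega}$ is elliptic (previous lemma), its extension $P_{\bar\Omega}\colon W^{s+2,2}\to W^{s,2}$ has finite-dimensional kernel and closed range, and a bounded operator with these two properties maps every closed subspace onto a closed subspace: $V+\ker P_{\bar\Omega}$ is closed, its image in the quotient by $\ker P_{\bar\Omega}$ is closed, and the induced injective operator is a topological isomorphism onto its closed range by the open mapping theorem. Applying this to the closed subspace $\ker\lambda_{\bar\Omega}\cap W^{s+2,2}$, on which $P_{\bar\Omega}=D_{\bar\Omega}S_{\bar\Omega}$ by your Step 1, gives the claim directly, and the same argument works in H\"older spaces. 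This is in fact shorter than the paper's route, which instead proves ellipticity of $L_{\bar\Omega}$ and exploits that $\ker\lambda_{\bar\Omega}$ has finite codimension in $\lambda_{\bar\Omega}^{-1}(\ker L_{\bar\Omega})$; but in the form you wrote it, the argument is incomplete at its crucial analytic point.
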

\begin{proof}
Clearly, $P_{\bar \Omega}(T_{\bar\Omega}\mc S_{\bar\Omega})=D_{\bar\Omega}S_{\bar\Omega}(T_{\bar\Omega}\mc S_{\bar \Omega})$. As an elliptic operator, $P_{\bar \Omega}$ has closed image. Furthermore, $\lambda_{\bar\Omega}\circ P_{\bar\Omega}=\lambda_{\bar\Omega}\lambda_{\bar\Omega}^\ast\circ\lambda_{\bar\Omega}$ and thus
\ben
P_{\bar\Omega}(T_{\bar\Omega}\mc S_{\bar\Omega})
\subset P_{\bar\Omega}(\Omega^3(M))\cap\ker\lambda_{\bar\Omega}
\subset P_{\bar\Omega}\big(\lambda_{\bar\Omega}^{-1}(\ker\lambda_{\bar\Omega}\lambda_{\bar\Omega}^\ast)\big).
\ee
Now $L_{\bar \Omega} :=\lambda_{\bar\Omega}\lambda_{\bar \Omega}^\ast$ is elliptic. Indeed, for the principal symbol applied to a covector $\xi\in T^*_xM$ we find that $\sigma(L_{\bar\Omega})(x,\xi)v=i(v\otimes\xi)^\ast\bar\Omega$. This is injective, for $(v\otimes\xi)^\ast\bar\Omega=0$ implies $v\otimes\xi\in\Lambda^2\subset\Lambda^1\otimes\Lambda^1$ on representation theoretic grounds, that is, $v\otimes\xi$ is skew. But this is impossible for a decomposable endomorphism unless $v=0$. Hence $g_{\bar \Omega}(\sigma(L_{\bar\Omega})(x,\xi)v,v)=-|\sigma(\lambda_{\bar\Omega}^*)(x,\xi)v|_{\bar\Omega}^2$ is negative-definite. Consequently, $\ker L_{\bar\Omega}$ is finite-dimensional and so $T_{\bar\Omega}\mc S_{\bar\Omega}$ is of finite codimension in $\lambda_{\bar\Omega}^{-1}(\ker\lambda_{\bar\Omega}\lambda_{\bar\Omega}^\ast)$. Since $T_{\bar\Omega}\mc S_{\bar\Omega}$ is also closed, $P_{\bar\Omega}(T_{\bar\Omega}\mc S_{\bar\Omega})$ is closed in $P_{\bar\Omega}\big(\lambda_{\bar\Omega}^{-1}(\ker\lambda_{\bar\Omega}\lambda_{\bar\Omega}^\ast)\big)$. As a result, $P_{\bar\Omega}(T_{\bar\Omega}\mc S_{\bar\Omega})$ is closed in $P_{\bar\Omega}(\Omega^3(M))\cap\ker\lambda_{\bar\Omega}$ and thus in $\Omega^3(M)$.
\end{proof}

Let $p:\Omega^3(M)\to D_{\bar\Omega}S_{\bar\Omega}(T_{\bar\Omega}\mc S_{\bar\Omega})$ be the orthogonal projection. By the previous lemma, $p\circ S_{\bar\Omega}:\mc S_{\bar\Omega}\to D_{\bar\Omega}S_{\bar\Omega}(T_{\bar\Omega}\mc S)$ is a submersion at $\bar\Omega$. It is also a real analytic map, since $g_{\bar\Omega}$ is Einstein (hence real analytic in harmonic coordinates, cf.~\cite{kade81}) and $\Delta_{g_{\bar\Omega}}\bar\Omega=\bar\tau_0^2\bar\Omega$ (so that $\bar\Omega$ is real analytic as a solution of an elliptic PDE with real analytic coefficients). As a consequence, $Z:=p\circ S_{\bar\Omega}^{-1}(0)$ is a real analytic submanifold with tangent space $\ker D_{\bar\Omega}S_{\bar\Omega}\cap T_{\bar\Omega}\mc S_{\bar\Omega}=\sigma(\bar\Omega)$. Restricted to $Z$, $S_{\bar\Omega}$ is also real analytic so that $(S_{\bar\Omega}|_Z)^{-1}(0)$, the premoduli space of solitons, is a real analytic subset. We thus arrive at the following conclusion (compare with Koiso's work~\cite{ko83} in the Einstein case).

\begin{thm}\label{premodstructure}
The slice $\mc S_{\bar\Omega}$ contains a finite dimensional real analytic submanifold $Z$ such that $Z$ contains $\mc M(\bar\Omega)$ as a real analytic subset and $T_{\bar\Omega}Z=\sigma(\bar\Omega)$.
\end{thm}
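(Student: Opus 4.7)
\medskip

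\noindent\textbf{Proof proposal.} The plan is to realise $Z$ as the zero set of an analytic submersion built out of $S_{\bar\Omega}$, and then to cut out $\mc M(\bar\Omega)$ inside $Z$ as the further zero set of $S_{\bar\Omega}|_Z$. Throughout I would work in a H\"older completion $C^{k,\alpha}(\Lambda^3T^\ast M)$ of $\Omega^3_+(M)$ so as to have the implicit function theorem at my disposal, returning to smooth (in fact real analytic) objects at the end by elliptic regularity for $P_{\bar\Omega}$.

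First I would exploit the closed-image lemma proved just above: set $V:=D_{\bar\Omega}S_{\bar\Omega}(T_{\bar\Omega}\mc S_{\bar\Omega})$, which is closed, and let $p:\Omega^3(M)\to V$ be the $L^2$-orthogonal projection onto it. Consider the map
\ben
F:\mc S_{\bar\Omega}\to V,\qquad F(\Omega):=p\circ S_{\bar\Omega}(\Omega).
\ee
By construction $D_{\bar\Omega}F=p\circ D_{\bar\Omega}S_{\bar\Omega}|_{T_{\bar\Omega}\mc S_{\bar\Omega}}$ is surjective onto $V$, so $F$ is a submersion at $\bar\Omega$, and $\ker D_{\bar\Omega}F$ coincides with $\ker(D_{\bar\Omega}S_{\bar\Omega}|_{T_{\bar\Omega}\mc S_{\bar\Omega}})=\sigma(\bar\Omega)$, which is finite dimensional by the corollary above.

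Next I would verify real analyticity of $F$ near $\bar\Omega$. Since $\bar\Omega$ is nearly parallel, $g_{\bar\Omega}$ is Einstein with positive scalar curvature and hence real analytic in harmonic coordinates by~\cite{kade81}; combined with the elliptic equation $\Delta_{g_{\bar\Omega}}\bar\Omega=\bar\tau_0^2\bar\Omega$ this forces $\bar\Omega$ itself to be real analytic in such charts. The operator $S_{\bar\Omega}$ is a second-order nonlinear differential operator whose coefficients depend rationally on the $1$-jet of $\Omega$ via $g_\Omega$, $\star_\Omega$ and $\vol_\Omega$, hence is real analytic in a neighbourhood of $\bar\Omega$; the projection $p$ onto the finite-codimensional image is linear, hence trivially analytic. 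The real analytic implicit function theorem then produces a real analytic submanifold $Z:=F^{-1}(0)\subset\mc S_{\bar\Omega}$ through $\bar\Omega$, of dimension $\dim\sigma(\bar\Omega)$ and with $T_{\bar\Omega}Z=\sigma(\bar\Omega)$.

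Finally, an $\Omega\in\mc S_{\bar\Omega}$ near $\bar\Omega$ is a $\Gt$-soliton \iff $S_{\bar\Omega}(\Omega)=0$, which in particular forces $F(\Omega)=0$ and hence $\Omega\in Z$; conversely $\mc M(\bar\Omega)=(S_{\bar\Omega}|_Z)^{-1}(0)$ is the vanishing locus of a real analytic map on a real analytic manifold, giving it the required structure of a real analytic subset. The most delicate point, and the one I expect to cost the most effort, is the regularity bookkeeping between the smooth and real analytic categories: the implicit function theorem must be applied in $C^{k,\alpha}$, elliptic regularity for $P_{\bar\Omega}$ (which differs from the linearised Dirichlet-DeTurck operator only by lower order terms and is therefore elliptic by Lemma 5.7 of~\cite{wewi10}) is then needed to promote members of $Z$ and $\mc M(\bar\Omega)$ to smooth sections, and the analyticity of $g_{\bar\Omega}$ in harmonic coordinates is finally invoked to obtain the stated analytic structure.
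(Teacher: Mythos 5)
Your proposal is correct and follows essentially the same route as the paper: project $S_{\bar\Omega}$ onto the closed image $D_{\bar\Omega}S_{\bar\Omega}(T_{\bar\Omega}\mc S_{\bar\Omega})$, use real analyticity of $g_{\bar\Omega}$ (Einstein, hence analytic in harmonic coordinates) and of $\bar\Omega$ (via $\Delta_{g_{\bar\Omega}}\bar\Omega=\bar\tau_0^2\bar\Omega$) to apply the analytic implicit function theorem, obtain $Z=(p\circ S_{\bar\Omega})^{-1}(0)$ with $T_{\bar\Omega}Z=\sigma(\bar\Omega)$, and then exhibit $\mc M(\bar\Omega)=(S_{\bar\Omega}|_Z)^{-1}(0)$ as a real analytic subset. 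Your extra remarks on H\"older completions and elliptic regularity only make explicit the bookkeeping the paper leaves implicit.
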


\begin{ex}
Consider the spaces
\begin{align*}
\sigma_1 &=\{\gamma\in\Omega^3_{27}(M)\,|\,\star_{\bar\Omega} d\gamma=-\bar\tau_0\gamma\},\\
\sigma_2 &=\{\gamma\in\Omega^3_{27}(M)\,|\,\star_{\bar\Omega} d\gamma=-3\bar\tau_0\gamma\},\\
\sigma_3 &=\{\gamma\in\Omega^3_{27}(M)\,|\,\star_{\bar\Omega} d\gamma=-3\bar\tau_0^2\gamma\}.
\end{align*}
Any $\gamma\in\sigma_{1,2}$ is coclosed. Since $d\bar\Omega=\tau_0\star_{\bar\Omega}\Omega$, we also have $\gamma\llcorner d\Omega=0$. Furthermore, any $\gamma\in\sigma_3$ is closed, hence $[\delta_{\bar\Omega}\gamma]_7=0$ (see the proofs of Lemma 3.3 and Proposition 5.3 in~\cite{alse11}). Therefore, $\lambda_{\bar\Omega}(\gamma)=0$ in all three cases. It is straightforward to check that $P_{\bar\Omega}\gamma=0$ for $\gamma\in\sigma_{1,2,3}$. By Theorem 6.2 in \cite{alse11} these spaces correspond to the infinitesimal Einstein deformations of $\bar\Omega$. We do not know whether they exhaust all of $\sigma(\bar\Omega)$.
\end{ex}

{\em Acknowledgments.} The authors would like to thank Bernd Ammann and Joel Fine for useful discussions on related matters. Furthermore, they thank the Hausdorff Research Institute for Mathematics at Bonn for hospitality during the preparation of the paper.
%
%
%

\end{document}